\def\iddots{\mathinner{\mkern1mu\raise\p@
    \hbox{.}\mkern2mu\raise4\p@\hbox{.}\mkern2mu
    \raise7\p@\vbox{\kern7\p@\hbox{.}}\mkern1mu}}
\def\adots{\mathinner{\mkern2mu\raise\p@\hbox{.} 
 \mkern2mu\raise4\p@\hbox{.}\mkern1mu
 \raise7\p@\vbox{\kern7\p@\hbox{.}}\mkern1mu}}
\newtheorem{theo}{Theorem}[section]
\newtheorem{lemma}[theo]{Lemma}
\newtheorem{pr}[theo]{Proposition}
\newtheorem{col}[theo]{Corollary}
\theoremstyle{definition}
\newtheorem{defn}[theo]{Definition}
\newtheorem{remark}[theo]{Remark}
\newtheorem{ex}[theo]{Example}
\newtheorem{nt}[theo]{Notation}
\def\rank{\mathop{\rm rank}\nolimits}
\def\dim{\mathop{\rm dim}\nolimits}
\def\diag{\mathop{\rm diag}\nolimits}
\def\det{\mathop{\rm det}\nolimits}
\def\ad{\mathop{\rm ad}\nolimits}
\def\Ann{\mathop{\rm Ann}\nolimits}
\def\C{\mathop{\mathbb{C}}\nolimits}
\def\Z{\mathop{\mathbb{Z}}\nolimits}
\def\g{\mathop{\mathfrak{g}}\nolimits}
\def\h{\mathop{\mathfrak{h}}\nolimits}
\def\Hom{\mathop{\rm Hom}\nolimits}
\def\Lie{\mathop{\rm Lie}\nolimits}
\def\sl{\mathop{\mathfrak{sl}}\nolimits}
\def\V{\mathop{\mathcal{V}}\nolimits}
\newcommand\bigzerou{\smash{\lower.3ex\hbox{\Huge 0}}} 
\newcommand\bigstaru{\smash{\lower.3ex\hbox{\Huge $*$}}} 
\numberwithin{equation}{section}
\title{ Contragredient Lie algebras and Lie algebras associated with a standard pentad}
\author{ Nagatoshi Sasano}
\begin{document}
\thispagestyle{empty}

\newpage
\begin{center}{\huge  Contragredient Lie algebras and Lie algebras associated with a standard pentad \footnote{{\bf 2010 Mathematic Subjects Classification}: Primary 17B67 Secondary 17B65, 17B70\\Keywords and phrases: contragredient Lie algebras, Kac-Moody Lie algebras, Cartan matrices, standard pentads}}\end{center}
\vspace{50truept}

\begin{center}{Nagatoshi SASANO}\end{center}
\begin{abstract}
From a given standard pentad, we can construct a finite or infinite-dimensional graded Lie algebra.
In this paper, we will define standard pentads which are analogues of Cartan subalgebras, and moreover, we will study graded Lie algebras corresponding to these standard pentads.
We call such pentads pentads of Cartan type and describe them by two positive integers and three matrices.
Using pentads of Cartan type, we can obtain arbitrary contragredient Lie algebras with an invertible symmetrizable Cartan matrix.
Moreover, we can use pentads of Cartan type in order to find the structure of a Lie algebra.
When a given standard pentad consists of a finite-dimensional reductive Lie algebra, its finite-dimensional completely reducible representation and a symmetric bilinear form, we can find the structure of its corresponding Lie algebra under some assumptions.
\end{abstract}

\section* {Introduction}
Let $(\mathfrak{g},\rho,V,{\cal V},B_0)$ be a pentad which consists of a finite or infinite-dimensional Lie algebra $\mathfrak{g}$, a representation $\rho $ of $\mathfrak{g}$ on a finite or infinite-dimensional vector space $V$, a submodule ${\cal V}$ of $\mathrm {Hom }(V,{\mathbb{C}})$ and a non-degenerate invariant bilinear form on $\mathfrak{g}$ all defined over ${\mathbb{C}}$.
When the restriction of the canonical pairing $\langle \cdot , \cdot \rangle :V\times \mathrm {Hom }(V,\mathbb{C})\rightarrow \mathbb{C}$ to $V\times {\cal V}$ is non-degenerate and there exists a linear map $\Phi _{\rho }:V\otimes {\cal V}\rightarrow \mathfrak{g}$ satisfying $B_0(a,\Phi _{\rho }(v\otimes \phi ))=\langle \rho (a\otimes v),\phi \rangle $, we say that $(\mathfrak{g},\rho,V,{\cal V},B_0)$ is a standard pentad.
For a standard pentad $(\mathfrak{g},\rho,V,{\cal V},B_0)$, there exists a graded Lie algebra $L(\mathfrak{g},\rho,V,{\cal V},B_0)=\bigoplus _{n\in\mathbb{Z}}V_n$, called the Lie algebra associated with a standard pentad, such that the components $V_0,V_1,V_{-1}$ are isomorphic to $\mathfrak{g}, V, {\cal V}$ respectively (\cite [Theorem 2.15]{Sa3}).
That is, we can embed a given Lie algebra $\mathfrak{g}$ and its representation $V$ into some graded Lie algebra when there exists a $\mathfrak{g}$-submodule ${\cal V}\subset \mathrm {Hom }(V,{\mathbb{C}})$ and a non-degenerate invariant bilinear form $B_0$ on $\mathfrak{g}$ such that $(\mathfrak{g},\rho,V,{\cal V},B_0)$ is standard.
\par 
In general, it is difficult to find the structure of $L(\mathfrak{g},\rho,V,{\cal V},B_0)$ by a direct computation.
On the other hand, from some special pentads, we can obtain some well-known Lie algebras using general theory of Lie algebras.
For example, finite-dimensional semisimple Lie algebras and loop algebras correspond to some standard pentads.
A finite-dimensional semisimple Lie algebra can be obtained from a reductive Lie algebra and its finite-dimensional completely reducible representation called a prehomogeneous vector space of parabolic type (due to H. Rubenthaler, see \cite {ru-1} or \cite {ru-3}).
\par
The theory of standard pentads is related to the general theory of prehomogeneous vector spaces, not only ones of parabolic type.
Indeed, we can describe the prehomogeneity of a representation of a reductive algebraic group $(G,\rho,V)$ by the ``injectivity'' of a graded Lie algebra $L(\Lie (G),d\rho,V,\Hom (V,\C),B)$ (for detail, see \cite {Sa2} or \S \ref{stap and pv}).
So, roughly, we can regard any reductive prehomogeneous vector space as a graded Lie algebra associated with a standard pentad which satisfies a certain Lie algebraic property.
\par 
It is well-known that a semisimple Lie algebra is obtained from a finite-dimensional commutative Lie algebra, called a Cartan subalgebra, and a fundamental root system\footnote {The canonical representation of a Cartan subalgebra on a direct sum of the root spaces of fundamental roots is a special case of prehomogeneous vector spaces of parabolic type.}.
The famous generalization of this construction has been obtained by V.Kac and R.Moody independently in 1960's.
Their theories have been evolved by many mathematicians, and called the theory of Kac-Moody Lie algebras today (the related history on Kac-Moody Lie algebras is summarized in \cite [\S 1.9]{ka-2}).
In this paper, we shall focus on the previous theory of Kac-Moody Lie algebras by V.Kac himself.
In \cite {ka-1}, V.Kac gave a way to construct a graded Lie algebras, called contragredient Lie algebras, from an arbitrary square matrix called a Cartan matrix.
\par The aim of this paper is to consider an analogue of the theory of contragredient Lie algebras on the theory of Lie algebras associated with a standard pentad and apply it.
In this paper, we shall consider ``Cartan subalgebra like'' standard pentads $(\mathfrak{g},\rho,V,{\cal V},B_0)$ and the corresponding Lie algebras.
Precisely, we shall study standard pentads $(\mathfrak{g},\rho,V,{\cal V},B_0)$ such that the Lie algebra $\mathfrak{g}$ is finite-dimensional and commutative and that the representation $(\rho ,V)$ is finite-dimensional and diagonalizable.
We call such pentads pentads of Cartan type.
A pentad of Cartan type is written by two positive integers and three matrices.
Some properties of the corresponding Lie algebra is also written by these data.
If we take a contragredient Lie algebra associated to an invertible Cartan matrix, then we can construct it from some pentads of Cartan type.
This is the first main result.
Moreover, we can construct a finite-dimensional reductive Lie algebra from some pentad of Cartan type.
It means that we can use some results of standard pentads to the structure theory of finite-dimensional reductive Lie algebras and contragredient Lie algebras.
As a remarkable result of the theory of standard pentads, we have  ``chain rule of standard pentads'', which is a kind of isomorphisms of Lie algebras associated with a standard pentad.
Applying this ``chain rule'', we can compute the structure of the Lie algebra $L(\mathfrak{g},\rho,V,{\cal V},B_0)$ in special cases where $\mathfrak{g}$ is finite-dimensional reductive and $\rho $ is also finite-dimensional completely reducible with ``full-scalar multiplications''.
This is the second main result.
\par 
This paper consists of three sections.
\par 
In section 1, the author introduces the notion and some properties of standard pentads and of corresponding Lie algebras briefly.
Moreover, we shall expand and give some new results on standard pentads which will be used later.
In particular, ``chain rule of standard pentads'' (Theorem \ref{th;chain}) will be frequently used in section 3.
\par
In section 2, we shall define the notion of pentads of Cartan type.
As mentioned before, this is a class of standard pentads which contains a finite-dimensional commutative Lie algebra and its finite-dimensional diagonalizable representation.
That is, the notion of pentads of Cartan type is an analogue of Cartan subalgebras of finite-dimensional semisimple Lie algebras.
A pentad of Cartan type is written by the following data: two positive integers $r$, $n$ and three matrices $A\in \mathrm {M}(r,r;\mathbb{C})$, $D\in \mathrm {M}(r,n;\mathbb{C})$, $\Gamma \in \mathrm {M}(n,n;\mathbb{C})$ (Definition \ref  {defn;cartan}).
Some fundamental properties of a pentad of Cartan type, and ones of the corresponding Lie algebra, are described by the properties of these data $r,n,A,D,\Gamma$.
In particular, the rank of $D$ and a matrix defined by $C(A,D,\Gamma )=\Gamma \cdot {}^tD\cdot A\cdot D$ play very important roles in this paper.
We call a matrix of the form $C(A,D,\Gamma )$ the ``Cartan matrix of a pentad of Cartan type'' (Definition \ref {defn;carmat}) and call a pentad of Cartan type with invertible Cartan matrix a regular pentad of Cartan type (Definition \ref {defn;regularPC}).
The Cartan matrix of a pentad of Cartan type plays similar roles to the Cartan matrix of contragredient Lie algebras (Proposition \ref {pr;cartanmat}).
\par 
In section 3, we shall study the structure of Lie algebras associated with a pentad of Cartan type (shortly, PC Lie algebras).
These Lie algebras are in general infinite-dimensional.
In particular cases where a pentad of Cartan type is regular, then the corresponding Lie algebra is a direct sum of center part and a contragredient Lie algebra associated to the same Cartan matrix (Theorem \ref{theo;1}).
Conversely, a contragredient Lie algebra with an invertible symmetrizable Cartan matrix is constructed from a regular pentad of Cartan type (Theorem \ref {th;contraPC}).
Using chain rule of standard pentads (Theorem \ref{th;chain}) to these Lie algebras, we can show that a Lie algebra constructed with a PC Lie algebra and its representation is again a PC Lie algebra (Theorem \ref {th;chainstap}).
Moreover, by adding scalar multiplications, we can embed a contragredient Lie algebra with an invertible Cartan matrix and its ``lowest weight module'' (or a sum of them) into some contragredient Lie algebra (Lemma \ref {lemma;lemma_contraemb}).
In particular, a finite-dimensional reductive Lie algebra (Theorem \ref {th;red_cartanstap}) and its completely reducible finite-dimensional representation with full-scalar multiplications can be embedded into some contragredient Lie algebra with an invertible Cartan matrix.
We can find the structure of such a contragredient Lie algebra by a computation of matrices (Theorem \ref {theo;simple_dom_emb}).

\subsection* {Notion and notations}
Throughout of this paper, we use the following notion and notations.
\begin{nt}\label{nt;1}
\begin{itemize}
\item {$\mathbb{Z}$, $\mathbb{C}$: the set of integers and the set of complex numbers},
\item {$\mathrm {M}(n,m;\mathbb{C})$: the set of all matrices of size $n\times m$ whose entries belong to $\mathbb{C}$},
\item {$A\cdot A^{\prime }$: a product of matrices $A$ and $A^{\prime }$ when it makes sense},
\item {${}^t A$: the transpose matrix of $A$},
\item {$I_n$, $O_n$: the unit matrix and the zero matrix of size $n$ respectively},
\item {$\diag (c_1,\ldots ,c_n)$: a diagonal matrix of size $n$ whose $(i,i)$-entry is $c_i$},
\item {$\delta _{n,m}$: the Kronecker delta}.
\end{itemize}
Throughout this paper, all objects are defined over $\mathbb{C}$.
\end{nt}
\begin{nt}\label{nt;2}
In this paper, we regard a representation $\pi $ of a Lie algebra $\mathfrak{l}$ on $U$ as a linear map $\mathfrak{l}\otimes U\rightarrow U$ satisfying the following equation 
\begin{align}
\pi ([a,b]\otimes u)=\pi (a\otimes \pi (b\otimes u))-\pi (b\otimes \pi (a\otimes u))
\end{align}
for any $a,b\in \mathfrak{l}$ and $u\in U$.
Moreover, we denote an ideal of $\mathfrak{l}$ defined by $\{a\in \mathfrak{l}\mid \pi (a\otimes u)=0\quad \text{for any $u\in U$}\}$ by $\Ann U$.
When a representation $(\pi, U)$ satisfies a condition that $\Ann U=\{0\}$, we say that $\pi $ is faithful.
\end{nt}
In this paper, we use terms ``gradation'' and ``graded'' in the following senses.
\begin{defn}[graded Lie algebras, \text{\cite [p.1274, Definition 1]{ka-1}}]
A decomposition of a Lie algebra $G$ into a direct sum of subspaces:
\begin{align}
G=\bigoplus _{i\in \mathbb{Z}}G_i,\label{eq;gradation}
\end{align}
with the following properties is said to be a gradation of $G$:
\begin{itemize}
\item {$[G_i,G_j]\subset G_{i+j} $.}
\end{itemize}
In particular, we do not assume that the components $G_i$ are finite-dimensional (cf. \cite [p.1274, Definition 1]{ka-1}).
A Lie algebra $G$ with the gradation (\ref{eq;gradation}) will be called graded when the following holds:
\begin{itemize}
\item {$G_{-1}\oplus G_0\oplus G_1$ generates $G$.}
\end{itemize}
\begin{defn}[positively (negatively) graded modules, \text{\cite[Definition 0.1]{Shen}}]\label{defn;p_n_g_mod}
A module $(\pi, U)$ of a graded Lie algebra $G=\bigoplus _{i\in \mathbb{Z}}G_i$ is called a positively graded module (respectively negatively graded module) if 
$$
U=\bigoplus _{i\geq 0}U_i\quad \text{(respectively $U=\bigoplus _{i\leq 0}V_i$)} \quad \text{(direct sum of subspaces)}
$$
and
$$
\pi (G_j\otimes U_i)\subset U_{i+j}.
$$
For $U\neq \{0\}$, reindexing the subscripts if necessary, we always assume that $U_0\neq \{0\}$, $U_0$ being a $G_0$-module called the base (respectively top) space of $U$.
\end{defn}
\begin{defn}[transitivity of positively (negatively) graded modules, \text{\cite[Definition 1.1]{Shen}}]
We retain to use the notation of Definition \ref{defn;p_n_g_mod}.
A positively (respectively negatively) graded module $U$ is transitive if $\pi\left (\left (\bigoplus _{i\leq -1}G_i\right )\otimes u\right )=\{0\}$ implies $u\in U_0$ (respectively $\pi\left (\left (\bigoplus _{i\geq 1}G_i\right )\otimes u\right )=\{0\}$ implies $u\in U_0$).
\end{defn}
\end{defn}
\begin{defn}[transitivity, \text{\cite [p.1275, Definition 2]{ka-1}}]\label{defn;tran}
A graded Lie algebra 
$$
G=\bigoplus _{i=-\infty }^{+\infty }G_i
$$
is said to be transitive if:
\begin{itemize}
\item {for $x\in G_i$, $i\geq 0$, $[x,G_{-1}]=0$ implies $x=0$,}
\item {for $x\in G_i$, $i\leq 0$, $[x,G_{1}]=0$ implies $x=0$.}
\end{itemize}
\end{defn}

\section{Standard pentads and corresponding graded Lie algebras}
\subsection{A Lie algebra associated with a standard pentad}
In this section, we aim to introduce the theory of standard pentads and an expansion of it (see \cite {Sa3} for detail).
The theory of standard pentads starts with the definition of $\Phi $-map of a pentad $(\mathfrak{g},\rho,V,{\cal V},B_0)$.
\begin{defn}[\text{$\Phi $-map, \cite[Definition 2.1]{Sa3}}]\label {defn;phimap}
Let $\mathfrak{g}$ be a non-zero Lie algebra with non-degenerate invariant bilinear form $B_0$, $\rho :\mathfrak{g}\otimes V\rightarrow V$ a representation of $\mathfrak{g}$ on a vector space $V$ and ${\cal V}$ a $\mathfrak{g}$-submodule of ${\rm Hom }(V,{\mathbb{C}})$ all defined over ${\mathbb{C}}$.
We denote the canonical pairing between $V$ and $\mathrm {Hom}(V,{\mathbb{C}})$ by $\langle \cdot,\cdot\rangle $ and the representation of $\mathfrak{g}$ on ${\cal V}$ by $\varrho$.
Then, if a pentad $(\mathfrak{g},\rho,V,{\cal V},B_0)$ has a linear map $\Phi _{\rho }:V\otimes {\cal V}\rightarrow \mathfrak{g}$ which satisfies an equation
\begin{align}
B_0(a,\Phi _{\rho }(v\otimes \phi ))=\langle \rho (a\otimes v),\phi \rangle =-\langle v,\varrho (a\otimes \phi )\rangle \label{defn;eq_stap_Phimap}
\end{align}for any $a\in \mathfrak{g}$, $v\in V$ and $\phi \in {\cal V}$, we call it a $\Phi $-map of the pentad $(\mathfrak{g},\rho,V,{\cal V},B_0)$.
\end{defn}
An arbitrary pentad might not have a $\Phi $-map.
However, from the assumption that $B_0$ is non-degenerate, we have that if a pentad $(\mathfrak{g},\rho,V,{\cal V},B_0)$ has a $\Phi $-map, then its $\Phi $-map is determined by the equation (\ref {defn;eq_stap_Phimap}) uniquely.
\begin{pr}\label{pr;kerimphi}
Let $(\mathfrak{g},\rho,V,{\cal V},B_0)$ be a pentad and assume that it has a $\Phi $-map.
Then the orthogonal space of $\Phi _{\rho }(V\otimes {\cal V})$, which is the image of the $\Phi $-map, in $\mathfrak{g}$ with respect to $B_0$ coincides with $\Ann V$, i.e. 
\begin{align*}
\Phi _{\rho }(V\otimes {\cal V})^{\perp }=\{a\in \mathfrak{g}\mid B_0(a,\Phi _{\rho }(V\otimes {\cal V}))=\{0 \} \}=\Ann V.
\end{align*}
In particular, if the vector space $\mathfrak{g}$ is finite-dimensional, then we have an equation 
\begin{align*}
\dim \Ann V +\dim \Phi _{\rho }(V\otimes {\cal V})=\dim \mathfrak{g}.
\end{align*}
\end{pr}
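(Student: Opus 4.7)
The plan is to unwind the defining identity of the $\Phi$-map, namely
\[
B_0(a,\Phi_\rho(v\otimes\phi)) \;=\; \langle \rho(a\otimes v),\phi\rangle,
\]
and show that the two conditions ``$a$ is $B_0$-orthogonal to every $\Phi_\rho(v\otimes\phi)$'' and ``$\rho(a\otimes v)=0$ for all $v$'' coincide, using the assumed non-degeneracy of the canonical pairing on $V\times{\cal V}$. Once this equality of subspaces is established, the dimension statement is immediate from the non-degeneracy of $B_0$ on the finite-dimensional $\mathfrak{g}$.

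First I would do the easy inclusion $\Ann V\subset \Phi_\rho(V\otimes{\cal V})^\perp$. Pick $a\in\Ann V$; then $\rho(a\otimes v)=0$ for every $v\in V$, so the right-hand side of the defining identity is identically zero, giving $B_0(a,\Phi_\rho(v\otimes\phi))=0$ for all $v\in V$ and $\phi\in{\cal V}$. Since $\Phi_\rho(V\otimes{\cal V})$ is the linear span of these elements, $a$ lies in its $B_0$-orthogonal.

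For the reverse inclusion $\Phi_\rho(V\otimes{\cal V})^\perp\subset\Ann V$, take $a\in\mathfrak{g}$ with $B_0(a,\Phi_\rho(v\otimes\phi))=0$ for every $v,\phi$. The defining identity then forces $\langle \rho(a\otimes v),\phi\rangle=0$ for every $v\in V$ and every $\phi\in{\cal V}$. Since the pairing $V\times{\cal V}\to\mathbb{C}$ is non-degenerate (this is exactly where that standing hypothesis is used), fixing $v$ and varying $\phi$ shows $\rho(a\otimes v)=0$, and since $v$ was arbitrary we conclude $a\in\Ann V$. This step is the only place where non-degeneracy of the pairing really enters, and it is also the mild obstacle in the argument: one must be sure the equation $\langle \rho(a\otimes v),\phi\rangle=0$ for all $\phi\in{\cal V}$ is enough to kill $\rho(a\otimes v)$, which would fail for a general submodule ${\cal V}\subset\Hom(V,\mathbb{C})$ without that hypothesis.

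For the dimension formula, assume $\dim\mathfrak{g}<\infty$. Since $B_0$ is a non-degenerate bilinear form on $\mathfrak{g}$, we have $\dim S+\dim S^\perp=\dim\mathfrak{g}$ for every subspace $S\subset\mathfrak{g}$. Apply this to $S=\Phi_\rho(V\otimes{\cal V})$ and substitute $S^\perp=\Ann V$ from the previous step to obtain the claimed equality.
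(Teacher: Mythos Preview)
Your proof is correct and follows essentially the same route as the paper: both arguments unwind the defining identity $B_0(a,\Phi_\rho(v\otimes\phi))=\langle\rho(a\otimes v),\phi\rangle$ and invoke non-degeneracy of the pairing $V\times\mathcal{V}\to\mathbb{C}$ to conclude $\rho(a\otimes v)=0$; the paper simply writes out the inclusion $\Phi_\rho(V\otimes\mathcal{V})^\perp\subset\Ann V$ and dismisses the converse with ``a similar argument,'' while you spell out both directions and the dimension count explicitly.
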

\begin{proof}
Take an arbitrary element $a\in \Phi _{\rho }(V\otimes {\cal V})^{\perp }$.
Then, for any element $v\in V$ and $\phi \in {\cal V}$, we have
\begin{align}
0=B_0(a,\Phi _{\rho }(v\otimes \phi ))=\langle \rho (a\otimes v),\phi \rangle .
\end{align}
Since the bilinear form $\langle \cdot,\cdot\rangle :V\times {\cal V}\rightarrow {\mathbb{C}}$ is non-degenerate, we have that $\rho (a\otimes v)=0$ for any $v\in V$.
It means that $a\in \Ann V$.
Thus, we have obtained that $\Phi _{\rho }(V\otimes {\cal V})^{\perp }\subset \Ann V$.
We can show the converse inclusion by a similar argument.
\end{proof}
Under these notations, we can give the definition of standard pentads.
\begin{defn}[\text{standard pentads, \cite [Definition 2.2]{Sa3}}]\label{defn;stap}
We retain to use the notations of Definition \ref{defn;phimap}.
If a pentad $(\mathfrak{g},\rho,V,{\cal V},B_0)$ satisfies the following conditions, we call it a {\it standard pentad}:
\begin{itemize}
\item [SP1:]{the restriction of the canonical pairing $\langle \cdot,\cdot\rangle :V\times \mathrm {Hom }(V,{\mathbb{C}})\rightarrow \mathbb{C}$ to $V\times {\cal V}$ is non-degenerate,}
\item [SP2:]{there exists a $\Phi $-map $\Phi _{\rho }:V\otimes {\cal V}\rightarrow \mathfrak{g}$.}
\end{itemize}
\end{defn}
Whenever vector spaces $\mathfrak{g}$ and  $V$ are finite-dimensional, any pentad $(\mathfrak{g},\rho,V,\mathrm {Hom }(V,{\mathbb{C}}),B_0)$ is always standard (see \cite [Lemma 2.3]{Sa3}).
Even if $\mathfrak{g}$ and $(\rho, V)$ have $\mathfrak{g}$-submodule ${\cal V}\subset \mathrm {Hom }(V,{\mathbb{C}})$ and a bilinear form $B_0$ such that $(\mathfrak{g},\rho,V,{\cal V},B_0)$ is standard, other pentad $(\mathfrak{g},\rho,V,{\cal V}^{\prime },B_0^{\prime })$ might not be standard (see \cite[Example 2.6]{Sa3}).
\par For a standard pentad, we can construct a graded Lie algebra.
\begin{theo}[\text{Lie algebras associated with a standard pentad, \cite [Theorem 2.15]{Sa3}}]\label {theo;stapLie}
For an arbitrary standard pentad  $(\mathfrak{g},\rho,V,{\cal V},B_0)$, there exists a (finite or infinite-dimensional) graded Lie algebra  $L(\mathfrak{g},\rho,V,{\cal V},B_0)=\bigoplus _{n\in\mathbb{Z}}V_n$ such that
\begin{align}
V_{-1}\simeq {\cal V},\quad V_0\simeq \mathfrak{g},\quad V_1\simeq V\label{th;stap;eq}
\end{align}
as Lie modules and that the restricted bracket product $[\cdot,\cdot]:V_1\otimes V_{-1}\rightarrow V_0$ is identified with the $\Phi $-map of $(\mathfrak{g},\rho,V,{\cal V},B_0)$ under the identification of (\ref {th;stap;eq}).
We call this graded Lie algebra $L(\mathfrak{g},\rho,V,{\cal V},B_0)$ the Lie algebra associated with a standard pentad.
\end{theo}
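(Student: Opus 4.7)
The plan is to realize $L(\mathfrak{g},\rho,V,\mathcal{V},B_0)$ as the maximal graded extension of the \emph{local} data concentrated in degrees $-1,0,1$, following Kac's construction. First, I would equip the graded vector space $L_{\mathrm{loc}} := \mathcal{V} \oplus \mathfrak{g} \oplus V$ (placed in degrees $-1, 0, 1$) with a partial bracket: the Lie bracket of $\mathfrak{g}$ on $V_0 \otimes V_0$, the actions $\rho$ and $-\varrho$ on $V_0 \otimes V_1$ and $V_0 \otimes V_{-1}$, and the map $\Phi_\rho$ on $V_1 \otimes V_{-1}$ (antisymmetrized). The only nonobvious Jacobi identity occurs on the triple $\mathfrak{g} \otimes V \otimes \mathcal{V}$, i.e.\
\[
[a,\Phi_\rho(v \otimes \phi)] \;=\; \Phi_\rho(\rho(a\otimes v)\otimes \phi) + \Phi_\rho(v \otimes \varrho(a \otimes \phi)).
\]
I would verify this by pairing both sides against an arbitrary $b \in \mathfrak{g}$ using $B_0$: invariance of $B_0$ converts the left-hand side into $\langle \rho([b,a]\otimes v),\phi\rangle$, while the right-hand side becomes $\langle \rho(b\otimes \rho(a\otimes v)),\phi\rangle - \langle \rho(a\otimes \rho(b\otimes v)),\phi\rangle$; the representation axiom of Notation \ref{nt;2} together with nondegeneracy of $B_0$ then forces the equality.

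Next, I would construct the maximal $\mathbb{Z}$-graded Lie algebra compatible with this local data. The cleanest route is to take the free Lie algebra $F(L_{\mathrm{loc}})$ on the graded space $L_{\mathrm{loc}}$ and quotient by the homogeneous ideal $I$ generated by all expressions $[x,y]_F - [x,y]_{\mathrm{loc}}$ for $x, y$ homogeneous with $|x|+|y| \in \{-1,0,1\}$. Call the quotient $\widetilde{L}$. By construction $\widetilde{L} = \bigoplus_{n \in \mathbb{Z}} \widetilde{L}_n$ is graded, the natural maps $\mathcal{V} \to \widetilde{L}_{-1}$, $\mathfrak{g} \to \widetilde{L}_0$, $V \to \widetilde{L}_1$ are surjective, and the restricted bracket $\widetilde{L}_1 \otimes \widetilde{L}_{-1} \to \widetilde{L}_0$ coincides with $\Phi_\rho$. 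Setting $V_n := \widetilde{L}_n$ produces the required graded Lie algebra, provided these three maps are in fact \emph{isomorphisms}.

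The main obstacle, and where SP1 and SP2 really enter, is ruling out a collapse in degrees $-1,0,1$. The standard device is to build a faithful graded representation of $\widetilde{L}$ whose restriction to $L_{\mathrm{loc}}$ reproduces the given $\rho$, $\varrho$, and the adjoint action of $\mathfrak{g}$ on itself; then the kernels of the three surjections above must vanish. Concretely, I would consider a tensor-algebra style module on which elements of $V$ act by tensoring and elements of $\mathcal{V}$ act by contractions via the pairing $\langle \cdot,\cdot \rangle$ (which is nondegenerate by SP1), with $\mathfrak{g}$ acting through $\rho$ and $\varrho$. Checking that this action factors through $F/I$ reduces once more to the defining relation \eqref{defn;eq_stap_Phimap} of the $\Phi$-map, while faithfulness on $L_{\mathrm{loc}}$ follows from nondegeneracy of $\langle \cdot,\cdot\rangle$ on $V \times \mathcal{V}$ and of $B_0$ on $\mathfrak{g}$ (via Proposition \ref{pr;kerimphi} one sees that no element of $\mathfrak{g}$ acting trivially can survive as a nonzero class in $\widetilde{L}_0$ that is separated from $\mathrm{Ann}\,V$).

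Once the identifications $V_{-1} \simeq \mathcal{V}$, $V_0 \simeq \mathfrak{g}$, $V_1 \simeq V$ are established, the identification of the bracket $V_1 \otimes V_{-1} \to V_0$ with $\Phi_\rho$ is tautological, since it is one of the relations defining $I$. The resulting $L(\mathfrak{g},\rho,V,\mathcal{V},B_0) = \widetilde{L}$ is then the desired graded Lie algebra.
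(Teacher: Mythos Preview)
Your verification that $\mathcal{V}\oplus\mathfrak{g}\oplus V$ is a local Lie algebra is correct, and as a bare existence statement your free-Lie-algebra quotient does produce \emph{a} graded Lie algebra with the prescribed local part. (Incidentally, once the local Jacobi identity is checked, the embedding of the local part into the maximal extension is Kac's Proposition~\ref{pr;existmaxmin} and needs no further input; your faithful-module device and the appeal to Proposition~\ref{pr;kerimphi} are not where SP1/SP2 actually do work.) The real issue is that you are building the \emph{maximal} extension, and this is not the object the paper denotes $L(\mathfrak{g},\rho,V,\mathcal{V},B_0)$.

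The construction imported from \cite{Sa3}, as sketched immediately after the theorem, is inductive: for $|n|\ge 2$ each $V_n$ is realized as a $\mathfrak g$-submodule of $\mathrm{Hom}(V_{-1},V_{n-1})$ (resp.\ $\mathrm{Hom}(V_1,V_{n+1})$), which forces the ``transitivity in degrees $|n|\ge 2$'' recorded right after the statement and appearing as conditions (iv)--(v) of Theorem~\ref{th;univ_stap}. Your maximal $\widetilde L$ generically fails this: already in the contragredient situation your $\widetilde L$ is Kac's $\tilde{\mathfrak g}(A)$, while the paper's $L$ is the quotient by the maximal graded ideal lying in $\bigoplus_{|n|\ge 2}\widetilde L_n$ (giving $G(A)$ when the Cartan matrix is invertible, cf.\ Theorem~\ref{theo;1}). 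The distinction is not cosmetic: Theorem~\ref{th;univ_stap}, Proposition~\ref{pr;stap_bilinear_exist}, and the chain rule Theorem~\ref{th;chain} all rely on that transitivity, so the remainder of the paper would not apply to your object. To land on the paper's $L$ you must either further quotient your $\widetilde L$ by that maximal high-degree ideal, or replace the free construction by the Hom-space prolongation of \cite{Sa3}.
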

The local Lie algebraic structure $V_{-1}\oplus V_0\oplus V_1\simeq \V\oplus \g\oplus V$ of the Lie algebra $L(\g,\rho,V,\V,B_0)=\bigoplus _{n\in \Z}V_n$ is given by the representations $\rho $, $\varrho $ and the $\Phi $-map of $(\g,\rho,V,\V,B_0)$:
$$
[a,v]=\rho (a\otimes v),\quad [a,\phi ]=\varrho (a\otimes \phi) ,\quad [v,\phi ]=\Phi _{\rho }(v\otimes \phi )
$$
for any $a\in V_0\simeq \g,\ v\in V_1\simeq V, \ \phi \in V_{-1}\simeq \V$.
In this sense, we can regard a representation $(\g,\rho,V)$, which satisfies a condition that there exists $\V$ and $B$ such that $(\g,\rho,V,\V,B_0)$ is a standard pentad, as a subspace of a larger graded Lie algebra (now, a similar result is obtained in \cite {arxiv} by H.Rubenthaler independently to the author).
The other components $V_{n}$ $(|n|\geq 2)$ will be inductively constructed to satisfy the Jacobi identity (for detail, see \cite {Sa3}).
However, it is difficult to find the structure of $L(\g,\rho,V,\V,B_0)=\bigoplus _{n\in \Z}V_n$ from this construction.
\par 
Let us study the properties of Lie algebras associated with some standard pentad.
Graded Lie algebras of the form $L(\mathfrak{g},\rho,V,{\cal V},B_0)=\bigoplus _{n\in\mathbb{Z}}V_n$ have properties that
\begin{itemize}
\item {for $x\in V_n$, $n\geq 2$, $[x,V_{-1}]=0$ implies $x=0$,}
\item {for $x\in V_n$, $n\leq -2$, $[x,V_{1}]=0$ implies $x=0$}
\end{itemize}
since each $V_n$ $(|n|\geq 2)$ is regarded as a submodule of $\mathrm {Hom }(V_{-1},V_{n-1})$ or $\mathrm {Hom }(V_1,V_{-n+1})$ (see \cite [Definition 2.9]{Sa3}).
Roughly speaking, a graded Lie algebra of the form $L(\mathfrak{g},\rho,V,{\cal V},B_0)=\bigoplus _{n\in\mathbb{Z}}V_n$ has ``transitivity'' for $|n|\geq 2$.
We can characterize such graded Lie algebras using this ``transitivity''.
\begin{theo}\label{th;univ_stap}
Let $\mathfrak{L}=\bigoplus _{n\in\mathbb{Z}}\mathfrak{L}_n$ be a graded Lie algebra.
Assume that there exists a bilinear form $B_{\widehat {\mathfrak{L}}}$ on the local part $\widehat {\mathfrak{L}}=\mathfrak{L}_{-1}\oplus \mathfrak{L}_0\oplus \mathfrak{L}_1$ of $\mathfrak{L}$.
If $\mathfrak{L}$ and $B_{\widehat {\mathfrak{L}}}$ satisfy the following conditions, then a pentad $(\mathfrak{L}_0,{\rm ad},\mathfrak{L}_1,\mathfrak{L}_{-1},B_{\widehat {\mathfrak{L}}}\mid _{\mathfrak{L}_0\times \mathfrak{L}_0})$ is standard and $\mathfrak{L}$ is isomorphic to the corresponding Lie algebra $L(\mathfrak{L}_0,{\rm ad},\mathfrak{L}_1,\mathfrak{L}_{-1},B_{\widehat {\mathfrak{L}}}\mid _{\mathfrak{L}_0\times \mathfrak{L}_0})$:
\begin{itemize}
\item [{\rm (i)}]{$\mathfrak{L}_{i+1}=[\mathfrak{L}_1,\mathfrak{L}_i]$, $\mathfrak{L}_{-i-1}=[\mathfrak{L}_{-1},\mathfrak{L}_{-i}]$ for all $i\geq 1$,}
\item [{\rm (ii)}]{the restriction of $B_{\widehat {\mathfrak{L}}}$ to $\mathfrak{L}_i\times \mathfrak{L}_{-i}$ is non-degenerate and $\mathfrak{L}_0$-invariant for $i= 0, 1$,}
\item [{\rm (iii)}]{it holds an equation that $B_{\widehat {\mathfrak{L}}}(a,[x,y])=B_{\widehat {\mathfrak{L}}}([a,x],y)$ for any $a\in \mathfrak{L}_0$, $x\in \mathfrak{L}_1$, $y\in \mathfrak{L}_{-1}$,}
\item [{\rm (iv)}]{for $x\in \mathfrak{L}_i$, $i\geq 2$, $[x,\mathfrak{L}_{-1}]=0$ implies $x=0$,}
\item [{\rm (v)}]{for $x\in \mathfrak{L}_{i}$, $i\leq -2$, $[x,\mathfrak{L}_{1}]=0$ implies $x=0$}
\end{itemize}
where $\mathrm{ad}$ stands for the adjoint representation of $\mathfrak{L}$ on itself.
\end{theo}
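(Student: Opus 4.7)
The plan is first to verify that $(\mathfrak{L}_0, \mathrm{ad}, \mathfrak{L}_1, \mathfrak{L}_{-1}, B_{\widehat{\mathfrak{L}}}|_{\mathfrak{L}_0 \times \mathfrak{L}_0})$ satisfies SP1 and SP2 using only conditions (ii) and (iii), and then to identify $\mathfrak{L}$ with $L := L(\mathfrak{L}_0, \mathrm{ad}, \mathfrak{L}_1, \mathfrak{L}_{-1}, B_{\widehat{\mathfrak{L}}}|_{\mathfrak{L}_0 \times \mathfrak{L}_0}) = \bigoplus_{n \in \mathbb{Z}} V_n$ by comparing the graded components, using (i) for generation and (iv)--(v) together with the transitivity of $L$ recalled just before the theorem.

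For the pentad axioms, I would exploit the non-degeneracy of $B_{\widehat{\mathfrak{L}}}$ on $\mathfrak{L}_1 \times \mathfrak{L}_{-1}$ given by (ii) to embed $\mathfrak{L}_{-1}$ into $\mathrm{Hom}(\mathfrak{L}_1, \mathbb{C})$ via $\iota(\phi)(v) := B_{\widehat{\mathfrak{L}}}(v, \phi)$. The $\mathfrak{L}_0$-invariance in (ii) gives $B_{\widehat{\mathfrak{L}}}([a,v], \phi) = -B_{\widehat{\mathfrak{L}}}(v, [a, \phi])$, which says that $\iota$ intertwines $\mathrm{ad}$ with the canonical contragredient action, so $\iota(\mathfrak{L}_{-1})$ is an $\mathfrak{L}_0$-submodule of $\mathrm{Hom}(\mathfrak{L}_1, \mathbb{C})$ and SP1 holds. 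For SP2 I would take $\Phi_{\mathrm{ad}} := [\cdot, \cdot]|_{\mathfrak{L}_1 \otimes \mathfrak{L}_{-1}}$; then condition (iii), read as $B_{\widehat{\mathfrak{L}}}(a, \Phi_{\mathrm{ad}}(v \otimes \phi)) = \langle \mathrm{ad}(a)v, \iota(\phi) \rangle$, is exactly (\ref{defn;eq_stap_Phimap}). The non-degeneracy and invariance of $B_{\widehat{\mathfrak{L}}}|_{\mathfrak{L}_0 \times \mathfrak{L}_0}$ required by the pentad definition come from the $i=0$ case of (ii).

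With the pentad standard, Theorem \ref{theo;stapLie} yields $L = \bigoplus_n V_n$ whose local part $V_{-1} \oplus V_0 \oplus V_1$ is isomorphic to $\widehat{\mathfrak{L}}$ as a local Lie algebra: $[V_0, V_{\pm 1}]$ is $\mathrm{ad}$ (and its dual), while $[V_1, V_{-1}] = \Phi_{\mathrm{ad}}$ agrees with the bracket in $\mathfrak{L}$. Because $L$ is generated by its local part subject only to the Jacobi identity and the local relations, this local isomorphism lifts to a graded Lie algebra homomorphism $\Psi : L \to \mathfrak{L}$. Condition (i) says $\mathfrak{L}$ is also generated by its local part, so $\Psi$ is surjective. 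For injectivity I would induct on $|n|$: $\ker\Psi$ is a graded ideal which is zero in degrees $|n| \leq 1$; if $V_{n-1} \cap \ker\Psi = 0$ for some $n \geq 2$ and $x \in V_n \cap \ker\Psi$, then $\Psi([x, V_{-1}]) = [0, \mathfrak{L}_{-1}] = 0$, so $[x, V_{-1}] \subset V_{n-1} \cap \ker\Psi = 0$, whence $x = 0$ by the transitivity of $L$. The $n \leq -2$ case is symmetric and uses transitivity in the lower half.

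The step I expect to be most delicate is the extension of the local isomorphism to $\Psi : L \to \mathfrak{L}$. This rests on the explicit inductive description of $L$ in \cite[Theorem 2.15]{Sa3}, where each $V_n$ with $|n| \geq 2$ is realized as a submodule of $\mathrm{Hom}(V_{-1}, V_{n-1})$ or $\mathrm{Hom}(V_1, V_{-n+1})$ satisfying specific Jacobi-type identities, and one must check that the corresponding brackets in $\mathfrak{L}$ obey exactly these identities. Conditions (iv) and (v), i.e.\ the transitivity of $\mathfrak{L}$ on $|n| \geq 2$, together with (i), are precisely the structural ingredients needed for this matching: without them $\mathfrak{L}_n$ could carry ``extra'' pieces invisible to the local data that would obstruct the comparison with the prolongation-type construction of $V_n$.
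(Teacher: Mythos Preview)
Your argument follows the same route as the paper: verify SP1--SP2 from (ii)--(iii), then extend the local isomorphism using (i), (iv), (v). The paper is terser---it simply asserts that the local isomorphism $\widehat{\sigma}:\widehat{\mathfrak{L}}\to V_{-1}\oplus V_0\oplus V_1$ ``can canonically be extended'' to all of $\mathfrak{L}\to L$---and it builds the extension in the opposite direction from yours (from $\mathfrak{L}$ into $L$, which is slightly more natural since $V_n$ is by construction a subspace of $\mathrm{Hom}(V_{-1},V_{n-1})$ and (iv) gives directly an embedding $\mathfrak{L}_n\hookrightarrow\mathrm{Hom}(\mathfrak{L}_{-1},\mathfrak{L}_{n-1})$).

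One point to tighten: your sentence ``because $L$ is generated by its local part subject only to the Jacobi identity and the local relations, this local isomorphism lifts to $\Psi:L\to\mathfrak{L}$'' is not correct as stated---that universal property belongs to Kac's \emph{maximal} graded Lie algebra (Proposition~\ref{pr;existmaxmin}), of which $L$ is a proper quotient. You rightly flag this as the delicate step in your final paragraph, and indeed (iv) on $\mathfrak{L}$ enters already at the \emph{existence} stage, not only in matching sizes: to see that $\Psi([v,w]):=[\Psi v,\Psi w]$ is well defined on $V_n=[V_1,V_{n-1}]$, note that $\sum_i[v_i,w_i]=0$ in $V_n\subset\mathrm{Hom}(V_{-1},V_{n-1})$ means $[\sum_i[v_i,w_i],\phi]=0$ for all $\phi\in V_{-1}$, whence by Jacobi and the inductive hypothesis $[\sum_i[\Psi v_i,\Psi w_i],\mathfrak{L}_{-1}]=0$, and then (iv) forces $\sum_i[\Psi v_i,\Psi w_i]=0$. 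Once $\Psi$ is in hand, your surjectivity argument (from (i)) and your inductive injectivity argument (from the transitivity of $L$ in degrees $|n|\ge 2$) are clean and correct.
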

\begin{proof}
First of all, note that a graded Lie algebra of the form $L(\mathfrak{g},\rho,V,{\cal V},B_0)=\bigoplus _{n\in\mathbb{Z}}V_n$ and a bilinear form $\widehat {B_0}$ on ${\cal V}\oplus \mathfrak{g}\oplus V=V_{-1}\oplus V_0\oplus V_1$ defined by 
\begin{align*}
\widehat {B_0}(x_i,y_j )=\begin{cases}B_0(x_i,y_j)&(i=j=0)\\ \langle x_i,y_j \rangle & (i=1,j=-1)\\ 0& (\text{otherwise})\end{cases}, 
\end{align*}
where $i,j=0,\pm 1,\ x_i\in V_i,\ y_j\in V_j$, satisfy the conditions from {\rm (i)} to {\rm (v)}.
\par If we assume that the graded Lie algebra $\mathfrak{L}=\bigoplus _{n\in\mathbb{Z}}\mathfrak{L}_n$ and the bilinear form $B_{\widehat {\mathfrak{L}}}$ satisfies the conditions from {\rm (i)} to {\rm (v)}, then the pentad $(\mathfrak{L}_0,{\rm ad},\mathfrak{L}_1,\mathfrak{L}_{-1},B_{\widehat {\mathfrak{L}}}\mid _{\mathfrak{L}_0\times \mathfrak{L}_0})$ is standard.
Indeed, from the condition {\rm (ii)}, the $\mathfrak{L}_0$-module $\mathfrak{L}_{-1}$ can be regarded as a submodule of $\mathrm {Hom }(\mathfrak{L}_{1},{\mathbb{C}})$ via the non-degenerate pairing $B_{\widehat {\mathfrak{L}}}\mid _{\mathfrak{L}_1\times \mathfrak{L}_{-1}}$.
Moreover, from the condition {\rm (iii)}, we can regard the restricted bracket product $[\cdot,\cdot ]:\mathfrak{L}_1\times \mathfrak{L}_{-1}\rightarrow \mathfrak{L}_0$ as the $\Phi $-map of $(\mathfrak{L}_0,{\rm ad},\mathfrak{L}_1,\mathfrak{L}_{-1},B_{\widehat {\mathfrak{L}}}\mid _{\mathfrak{L}_0\times \mathfrak{L}_0})$.
Thus, we have that the pentad $(\mathfrak{L}_0,{\rm ad},\mathfrak{L}_1,\mathfrak{L}_{-1},B_{\widehat {\mathfrak{L}}}\mid _{\mathfrak{L}_0\times \mathfrak{L}_0})$ is standard and that there corresponds to a graded algebra $L(\mathfrak{L}_0,{\rm ad},\mathfrak{L}_1,\mathfrak{L}_{-1},B_{\widehat {\mathfrak{L}}}\mid _{\mathfrak{L}_0\times \mathfrak{L}_0})=\bigoplus _{n\in\mathbb{Z}}V_n$.
\par
Take an isomorphism of local Lie algebras $\widehat{\sigma } :\mathfrak{L}_{-1}\oplus \mathfrak{L}_0\oplus \mathfrak{L}_1\rightarrow V_{-1}\oplus V_0\oplus V_1$.
Then, we can canonically extend the isomorphism $\widehat{\sigma }$ on the whole graded Lie algebra $\mathfrak{L}=\bigoplus _{n\in\mathbb{Z}}\mathfrak{L}_n\rightarrow L(\mathfrak{L}_0,{\rm ad},\mathfrak{L}_1,\mathfrak{L}_{-1},B_{\widehat {\mathfrak{L}}}\mid _{\mathfrak{L}_0\times \mathfrak{L}_0})=\bigoplus _{n\in\mathbb{Z}}V_n$.
Thus, we have our claim.
\end{proof}
In particular, for a standard pentad, there exists a unique graded Lie algebra satisfying the conditions from {\rm (i)} to {\rm (v)} up to isomorphism.
\begin{defn}[\text{equivalent pentads, \cite [Definition 2.22]{Sa3}}]
Let $(\mathfrak{g}^i,\rho ^i,V ^i,{\cal V} ^i,B_0 ^i)$ $(i=1,2)$ be standard pentads.
We say that the pentads $(\mathfrak{g}^i,\rho ^i,V ^i,{\cal V} ^i,B_0 ^i)$ $(i=1,2)$ are equivalent if and only if there exist linear isomorphisms $\tau :\mathfrak{g}^1\rightarrow \mathfrak{g}^2$, $\sigma :V^1\rightarrow V^2$, $\varsigma :{\cal V}^1\rightarrow {\cal V}^2$ and a non-zero element $c\in {\mathbb{C}}$ such that
\begin{align}
&\sigma (\rho ^1(a^1\otimes v^1))=\rho ^2(\tau (a^1)\otimes \sigma (v^1)), &&\varsigma (\varrho ^1(a^1\otimes \phi ^1))=\varrho ^2(\tau (a^1)\otimes \varsigma (\phi ^1)),&\notag\\
&B_0^1(a^1,b^1)=cB_0^2(\tau (a^1),\tau (b^1)),&&\langle v^1,\phi ^1\rangle ^1=\langle \sigma (v^1),\varsigma (\phi ^1)\rangle ^2,& \label {eq;def_stapeq}
\end{align}
where $\varrho $ is the representation of $\mathfrak{g}$ on ${\cal V}$ (see Definition \ref {defn;phimap}), for any $a^1,b^1\in \mathfrak{g}^1, v^1\in V^1,\phi ^1\in {\cal V}^1$.
\end{defn}
\begin{pr}[\text{\cite [Proposition 2.24]{Sa3}}]\label{pr;stapequi_lieiso}
If pentads $(\mathfrak{g}^{1},\rho^{1},V^{1},{\cal V}^1,B_0^{1})$ and $(\mathfrak{g}^{2},\rho^{2},V^{2},{\cal V}^2,B_0^{2})$ are standard and equivalent to each other, then the Lie algebras associated with them are isomorphic as graded Lie algebras, i.e. we have an isomorphism of graded Lie algebras:
\begin{align*}
L(\mathfrak{g}^{1},\rho^{1},V^{1},{\cal V}^1,B_0^{1})\simeq L(\mathfrak{g}^{2},\rho^{2},V^{2},{\cal V}^2,B_0^{2}).
\end{align*}
\end{pr}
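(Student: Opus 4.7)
The plan is to exhibit a local Lie algebra isomorphism between the local parts $V_{-1}^i\oplus V_0^i\oplus V_1^i$ of $L^i:=L(\mathfrak{g}^i,\rho^i,V^i,\mathcal{V}^i,B_0^i)$ and then extend it to a graded Lie algebra isomorphism using the inductive construction of the components $V_n^i$ for $|n|\ge 2$.

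The first step is to show that the two $\Phi$-maps are compatible (up to the scalar $c$). Starting from the defining equation $B_0^1(a^1,\Phi_{\rho^1}(v^1\otimes\phi^1))=\langle\rho^1(a^1\otimes v^1),\phi^1\rangle^1$, I would use the four relations in (\ref{eq;def_stapeq}) to translate both sides into pentad $2$:
\begin{align*}
\langle\rho^1(a^1\otimes v^1),\phi^1\rangle^1
&=\langle\sigma(\rho^1(a^1\otimes v^1)),\varsigma(\phi^1)\rangle^2\\
&=\langle\rho^2(\tau(a^1)\otimes\sigma(v^1)),\varsigma(\phi^1)\rangle^2\\
&=B_0^2(\tau(a^1),\Phi_{\rho^2}(\sigma(v^1)\otimes\varsigma(\phi^1))),
\end{align*}
while on the other hand $B_0^1(a^1,\Phi_{\rho^1}(v^1\otimes\phi^1))=cB_0^2(\tau(a^1),\tau(\Phi_{\rho^1}(v^1\otimes\phi^1)))$. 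Since $\tau$ is an isomorphism and $B_0^2$ is non-degenerate, equality for all $a^1\in\mathfrak{g}^1$ forces the key identity $c\cdot\tau(\Phi_{\rho^1}(v^1\otimes\phi^1))=\Phi_{\rho^2}(\sigma(v^1)\otimes\varsigma(\phi^1))$.

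Next, under the identifications $V_{-1}^i\simeq\mathcal{V}^i$, $V_0^i\simeq\mathfrak{g}^i$, $V_1^i\simeq V^i$ (for which the local brackets are $\rho^i$, $\varrho^i$ and $\Phi_{\rho^i}$, as recorded right after Theorem \ref{theo;stapLie}), I would define
$$\widehat{\tau}\colon V_{-1}^1\oplus V_0^1\oplus V_1^1\longrightarrow V_{-1}^2\oplus V_0^2\oplus V_1^2$$
by $\widehat{\tau}|_{V_0^1}=\tau$, $\widehat{\tau}|_{V_1^1}=\sigma$, and $\widehat{\tau}|_{V_{-1}^1}=\tfrac{1}{c}\varsigma$; the rescaling by $1/c$ on $V_{-1}^1$ is exactly what is needed to absorb the constant produced in the previous step. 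Bracket compatibility on $[V_0^1,V_{\pm1}^1]$ is immediate from the $\tau$-equivariance of $\sigma$ and $\varsigma$ (first two equations of (\ref{eq;def_stapeq})), and compatibility on $[V_1^1,V_{-1}^1]$ is precisely the $\Phi$-map identity just established. Hence $\widehat{\tau}$ is an isomorphism of local Lie algebras.

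Finally, I would extend $\widehat{\tau}$ to the whole graded Lie algebra. Since each component $V_n^i$ with $|n|\ge 2$ is built inductively as a subspace of $\mathrm{Hom}(V_{-1}^i,V_{n-1}^i)$ or $\mathrm{Hom}(V_1^i,V_{-n+1}^i)$ and is uniquely determined by the local Lie algebraic data, the local isomorphism $\widehat{\tau}$ extends canonically to a graded Lie algebra isomorphism $L^1\to L^2$; this is exactly the extension mechanism used at the end of the proof of Theorem \ref{th;univ_stap}. The only delicate point in this plan is tracking the scalar $c$ and choosing to concentrate the rescaling on the single component $V_{-1}^1$ so that all three local bracket identities hold simultaneously; once that bookkeeping is done, the rest is formal.
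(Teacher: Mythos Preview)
The paper does not give its own proof of this proposition; it is simply cited from \cite[Proposition~2.24]{Sa3}. Your approach is the natural one and matches what any direct proof must do: the $\Phi$-map compatibility computation is correct, and the decision to place the $1/c$ rescaling on the $V_{-1}$ component is exactly the right bookkeeping.

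There is, however, one genuine omission. For $\widehat\tau$ to be an isomorphism of \emph{local Lie algebras} you must also verify the $[V_0^1,V_0^1]$ case, i.e.\ that $\tau$ itself is a Lie algebra homomorphism $\mathfrak g^1\to\mathfrak g^2$. The definition of equivalent pentads as stated in this paper calls $\tau$ only a \emph{linear} isomorphism, and from the four relations in \eqref{eq;def_stapeq} one can deduce at most that $\tau([a,b])-[\tau(a),\tau(b)]\in\Ann V^2$ (via the equivariance of $\sigma$). Without the missing hypothesis the proposition is in fact false: take $V^i=\mathcal V^i=\{0\}$, let $\mathfrak g^1=\mathfrak{sl}_2$ and $\mathfrak g^2$ be the abelian Lie algebra on the same $3$-dimensional vector space, set $B_0^1=B_0^2$ equal to the Killing form of $\mathfrak{sl}_2$, and let $\tau=\mathrm{id}$. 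All four conditions in \eqref{eq;def_stapeq} hold with $c=1$, yet $L^1\simeq\mathfrak{sl}_2\not\simeq\mathfrak{gl}_1^{\,3}\simeq L^2$. Presumably the original definition in \cite{Sa3} requires $\tau$ to be a Lie algebra isomorphism; you should add that hypothesis explicitly, after which your argument goes through as written.
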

\begin{defn}[\text{direct sum, \cite [Definition 2.26]{Sa3}}]\label{defn;d_sum_stap}
Let $(\mathfrak{g}^{1},\rho^{1},V^{1},{\cal V}^1,B_0^{1})$ and $(\mathfrak{g}^{2},\rho^{2},V^{2},{\cal V}^2,B_0^{2})$ be standard pentads.
Let $\rho ^{1}\boxplus\rho ^{2}$ and $\varrho ^{1}\boxplus\varrho ^{2}$ be representations of $\mathfrak{g} ^{1}\oplus \mathfrak{g} ^{2}$ on $V ^{1}\oplus V ^{2}$ and ${\cal V}^1\oplus {\cal V}^2$ defined by:
\begin{align*}
& (\rho ^{1}\boxplus \rho ^{2})((a ^{1},a ^{2})  \otimes (v ^{1},v ^{2})):=(\rho ^{1}(a ^{1}\otimes v ^{1}), \rho ^{2}(a ^{2}\otimes v ^{2})),\\
& (\varrho ^{1}\boxplus \varrho ^{2})((b ^{1},b ^{2})  \otimes (\phi ^{1},\phi ^{2})):=(\varrho ^{1}(b ^{1}\otimes \phi ^{1}), \varrho ^{2}(b ^{2}\otimes \phi ^{2}))
\end{align*}
where $a ^{i}, b^i\in \mathfrak{g} ^{i}$, $v ^{i}\in V ^{i}$, $\phi  ^{i}\in {\cal V} ^{i}$ $(i=1,2)$.
Let $B_0 ^{1}\oplus B_0 ^ {2}$ be a bilinear form on $\mathfrak{g} ^{1}\oplus \mathfrak{g} ^{2}$ defined by:
\begin{align}
(B_0 ^{1}\oplus B_0^ {2})((a ^{1}, a ^{2}),(b ^{1},b ^{2})):=B_0 ^{1}(a ^{1},b ^{1})+B_0 ^{2}(a ^{2},b ^{2})
\end{align}
where $a ^{i}, b^{i}\in\mathfrak{g} ^{i}$ $(i=1,2)$.
Then, clearly, a pentad $(\mathfrak{g} ^{1}\oplus \mathfrak{g} ^{2},\rho ^{1}\boxplus \rho ^{2}, V ^{1}\oplus V ^{2}, {\cal V}^1\oplus {\cal V}^2,B_0 ^{1}\oplus B_0 ^{2})$ is also a standard pentad.
We call it a ${\it direct\ sum}$ of $(\mathfrak{g}^{1},\rho^{1},V^{1},{\cal V}^1,B_0^{1})$ and $(\mathfrak{g}^{2},\rho^{2},V^{2},{\cal V}^2,B_0^{2})$ and denote it by $(\mathfrak{g}^{1},\rho^{1},V^{1},{\cal V}^1,B_0^{1})\oplus(\mathfrak{g}^{2},\rho^{2},V^{2},{\cal V}^2,B_0^{2})$.
\end{defn}
\begin{pr}[\text{\cite [Proposition 2.27]{Sa3}}]\label{pr;d_sum_stap}
Let $(\mathfrak{g}^{1},\rho^{1},V^{1},{\cal V}^1,B_0^{1})$ and $(\mathfrak{g}^{2},\rho^{2},V^{2},{\cal V}^2,B_0^{2})$ be standard pentads.
Then the Lie algebra $L((\mathfrak{g}^{1},\rho^{1},V^{1},{\cal V}^1,B_0^{1})\oplus(\mathfrak{g}^{2},\rho^{2},V^{2},{\cal V}^2,B_0^{2}))$ is isomorphic to $L(\mathfrak{g}^{1},\rho^{1},V^{1},{\cal V}^1,B_0^{1})\oplus L(\mathfrak{g}^{2},\rho^{2},V^{2},{\cal V}^2,B_0^{2})$.
\end{pr}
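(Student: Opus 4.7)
The plan is to invoke the characterization Theorem \ref{th;univ_stap}: if a graded Lie algebra together with a bilinear form on its local part satisfies conditions (i)--(v), then it is isomorphic to the Lie algebra associated with the standard pentad reconstructed from its local data. So I will form the external direct sum
$$\mathfrak{L}=L(\mathfrak{g}^{1},\rho^{1},V^{1},{\cal V}^1,B_0^{1})\oplus L(\mathfrak{g}^{2},\rho^{2},V^{2},{\cal V}^2,B_0^{2}),$$
endow it with the natural grading $\mathfrak{L}_n:=V_n^{1}\oplus V_n^{2}$, and show that with respect to a suitable bilinear form on $\mathfrak{L}_{-1}\oplus \mathfrak{L}_0\oplus \mathfrak{L}_1$ it satisfies the hypotheses of Theorem \ref{th;univ_stap} for the direct sum pentad.

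First I would define the bilinear form on the local part $\mathfrak{L}_{-1}\oplus\mathfrak{L}_0\oplus\mathfrak{L}_1$ as $B_{0}^{1}\oplus B_{0}^{2}$ on $\mathfrak{L}_0\times\mathfrak{L}_0$ together with the pairing $\langle (v^1,v^2),(\phi^1,\phi^2)\rangle:=\langle v^1,\phi^1\rangle^1+\langle v^2,\phi^2\rangle^2$ on $\mathfrak{L}_1\times\mathfrak{L}_{-1}$, and zero elsewhere. Next I would verify the five conditions of Theorem \ref{th;univ_stap} in turn. Condition (i) follows since each $L(\mathfrak{g}^i,\rho^i,V^i,{\cal V}^i,B_0^i)$ is itself generated (in positive/negative degree) from its local part and the cross-brackets between the two summands vanish by the direct-sum Lie bracket. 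Condition (ii) is immediate because the direct sum of two non-degenerate and invariant pairings is again non-degenerate and invariant. Condition (iii) reduces degreewise to the corresponding identity on each summand, which holds because the restricted bracket there is a $\Phi$-map.

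The transitivity conditions (iv) and (v) require a short argument: if $x=(x^1,x^2)\in \mathfrak{L}_n$, $n\geq 2$, satisfies $[x,\mathfrak{L}_{-1}]=0$, then bracketing with $(\phi^1,0)$ and $(0,\phi^2)$ for arbitrary $\phi^i\in V_{-1}^i$ gives $[x^1,V_{-1}^1]=0$ and $[x^2,V_{-1}^2]=0$ separately, and then transitivity of each $L(\mathfrak{g}^i,\rho^i,V^i,{\cal V}^i,B_0^i)$ (guaranteed by the description recalled just before Theorem \ref{th;univ_stap}) forces $x^1=x^2=0$; (v) is symmetric.

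Finally I would identify the resulting reconstructed pentad: by construction the local data $(\mathfrak{L}_0,\mathrm{ad},\mathfrak{L}_1,\mathfrak{L}_{-1},B_{\widehat{\mathfrak{L}}}|_{\mathfrak{L}_0\times\mathfrak{L}_0})$ coincides with the direct sum pentad as defined in Definition \ref{defn;d_sum_stap}, since the representations $\rho^1\boxplus\rho^2$ and $\varrho^1\boxplus\varrho^2$ are exactly the componentwise adjoint actions and the form on $\mathfrak{g}^1\oplus\mathfrak{g}^2$ is $B_0^1\oplus B_0^2$. Thus Theorem \ref{th;univ_stap} yields $\mathfrak{L}\simeq L((\mathfrak{g}^{1},\rho^{1},V^{1},{\cal V}^1,B_0^{1})\oplus(\mathfrak{g}^{2},\rho^{2},V^{2},{\cal V}^2,B_0^{2}))$, which is the desired isomorphism. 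I do not anticipate a real obstacle beyond the mild bookkeeping in checking (iv)--(v); the main point is that Theorem \ref{th;univ_stap} is doing all the heavy lifting by reducing the statement to local verifications that are essentially immediate from the definition of direct sum.
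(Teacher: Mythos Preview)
Your argument is correct. The paper itself does not supply a proof of this proposition; it is merely quoted from \cite[Proposition~2.27]{Sa3}, so there is no in-paper proof to compare against directly. That said, your route via Theorem~\ref{th;univ_stap} is exactly the characterization result the paper sets up for this purpose, and each of the verifications (i)--(v) goes through as you describe: the cross-brackets vanish in the external direct sum, so condition~(i) and the transitivity conditions (iv)--(v) reduce componentwise to the corresponding properties of each $L(\mathfrak{g}^{i},\rho^{i},V^{i},{\cal V}^i,B_0^{i})$, while (ii) and (iii) are immediate from the definitions of $B_0^1\oplus B_0^2$ and $\langle\cdot,\cdot\rangle^1+\langle\cdot,\cdot\rangle^2$. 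The identification of the reconstructed local pentad with the direct-sum pentad of Definition~\ref{defn;d_sum_stap} is likewise straightforward, so the proof is complete as outlined.
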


\begin{pr}[\text{cf. \cite [Proposition 3.4.3]{arxiv}}]\label {pr;transitive}
For a standard pentad $(\mathfrak{g},\rho,V,{\cal V},B_0)$, we consider the following conditions:
\begin{itemize}
\item [{\rm (i)}]{both the representations $\rho :\mathfrak{g}\otimes V\rightarrow V$ and $\varrho :\mathfrak{g}\otimes {\cal V}\rightarrow {\cal V}$ of $\mathfrak{g}$ are faithful and surjective,}
\item [{\rm (ii)}]{the corresponding graded Lie algebra $L(\mathfrak{g},\rho,V,{\cal V},B_0)$ is transitive.}
\end{itemize}
The condition {\rm (i)} implies {\rm (ii)}.
Moreover, when $V$ and ${\cal V}$ are finite-dimensional, the conditions {\rm (i)} and {\rm (ii)} are equivalent.
\end{pr}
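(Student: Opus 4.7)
The plan is to reduce the transitivity condition to statements about the local part $V_{-1}\oplus V_0\oplus V_1$, since the transitivity for degrees $|n|\geq 2$ is already built into the construction of $L(\g,\rho,V,\V,B_0)$ (as noted in the paragraph preceding Theorem \ref{th;univ_stap}). So the only non-trivial degrees to check are $n=-1,0,1$, and at each of these I will translate $[x,V_{\pm 1}]=0$ into a statement about $\rho$, $\varrho$, or the $\Phi$-map via the defining equation
\[
B_0(a,\Phi _{\rho }(v\otimes \phi ))=\langle \rho (a\otimes v),\phi \rangle =-\langle v,\varrho (a\otimes \phi )\rangle.
\]

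For (i) $\Rightarrow$ (ii), the two cases at $n=0$ are immediate: $[x,V_{-1}]=0$ means $\varrho(x\otimes\V)=0$, so $x\in\Ann\V=\{0\}$ by faithfulness of $\varrho$; similarly $[x,V_1]=0$ forces $x=0$ by faithfulness of $\rho$. For $x\in V_1\simeq V$ with $[x,V_{-1}]=0$, the identity above gives $\langle x,\varrho(a\otimes \phi)\rangle=0$ for all $a,\phi$; surjectivity of $\varrho$ then says $x$ annihilates all of $\V$, and SP1 forces $x=0$. The case $n=-1$ is symmetric, using surjectivity of $\rho$ together with SP1.

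For (ii) $\Rightarrow$ (i) under the finite-dimensionality assumption, faithfulness is essentially the previous argument in reverse: an element of $\Ann V$ (resp.\ $\Ann \V$) is a transitivity obstruction at degree $0$, hence zero. The key point, and where I expect the main obstacle, is surjectivity. Suppose $W=\rho(\g\otimes V)\subsetneq V$. Since $V$ and $\V$ are finite-dimensional and paired non-degenerately, SP1 gives $\V\simeq V^{*}$, so the annihilator $W^{\perp}\cap \V$ has dimension $\dim V-\dim W>0$; pick a non-zero $\phi$ in it. Then $\langle \rho(a\otimes v),\phi\rangle=0$ for all $a\in\g,v\in V$, so by the $\Phi$-map equation $B_0(a,\Phi_\rho(v\otimes\phi))=0$ for all $a$, and non-degeneracy of $B_0$ yields $\Phi_\rho(V\otimes\phi)=0$, i.e.\ $[\phi,V_1]=0$. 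This contradicts transitivity at $V_{-1}$, so $\rho$ is surjective; the argument for $\varrho$ is symmetric.

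The obstacle in the converse direction is exactly the appeal to finite-dimensional duality to produce the obstructing $\phi\in\V$ from a proper submodule $W\subset V$. In infinite dimensions one only has $\V\hookrightarrow V^{*}$ and the annihilator of $W$ inside $\V$ can vanish even when $W\neq V$, which is why the statement requires finiteness of both $V$ and $\V$. Everything else is a direct book-keeping exercise with the defining relation of the $\Phi$-map.
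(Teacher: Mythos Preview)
Your argument is correct. The paper itself does not supply a proof here; it simply defers to \cite[Proposition 3.4.3]{arxiv}, so your proposal is in fact more detailed than what appears in the text. The route you take---reducing to the local part via the built-in transitivity for $|n|\geq 2$, and then unpacking the degree $0,\pm 1$ conditions through the defining identity $B_0(a,\Phi_\rho(v\otimes\phi))=\langle\rho(a\otimes v),\phi\rangle=-\langle v,\varrho(a\otimes\phi)\rangle$---is exactly the natural one, and is presumably what the cited reference does as well. Your identification of the finite-dimensionality hypothesis as precisely what is needed to produce a nonzero annihilator $\phi\in\V$ of a proper image $W\subsetneq V$ is also on point.
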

\begin{proof}
We can prove this claim by a similar argument in \text{\cite [Proposition 3.4.3]{arxiv}}.
\end{proof}
For a given standard pentad $(\mathfrak{g},\rho,V,{\cal V},B_0)$, we can construct positively or negatively graded $L(\mathfrak{g},\rho,V,{\cal V},B_0)$-modules from $\mathfrak{g}$-modules.
The following is a special case of \cite [Theorem 1.2]{Shen}.
\begin{theo}[\text{\cite [Theorems 3.12, 3.14, 3.17]{Sa3}}]\label{th;p_g_n_g}
Let $(\mathfrak{g},\rho,V,{\cal V},B_0)$ be a standard pentad and $U$ a $\mathfrak{g}$-module.
Then there exists a positively graded $L(\mathfrak{g},\rho,V,{\cal V},B_0)$-module  $(\tilde{\pi }^+,\tilde{U}^+=\bigoplus _{m\geq 0}{U}_m^+)$ (respectively negatively graded $L(\mathfrak{g},\rho,V,{\cal V},B_0)$-module $(\tilde{\pi }^-,\tilde{U}^-=\bigoplus _{m\leq 0}{U}_m^-)$) such that 
\begin{itemize}
\item {$U_0^+=U$ (respectively $U_0^-=U$),}
\item {$\tilde{\pi }^+(V_1\otimes U_m^+)=U_{m+1}^+$ for any $m\geq 0$ (respectively $\tilde{\pi }^-(V_{-1}\otimes U_m^-)=U_{m-1}^-$ for any $m\leq 0$),}
\item {for $u_m^+\in U_m^+$, $m\geq 1$, $\tilde{\pi }^+(V_{-1}\otimes u_m^+)=0$ implies $u_m^+=0$ (respectively for $u_m^-\in U_m^-$, $m\leq -1$, $\tilde{\pi }^-(V_{1}\otimes u_m^-)=0$ implies $u_m^-=0$)}
\end{itemize}
uniquely up to isomorphism.
We call such a positively graded $L(\mathfrak{g},\rho,V,{\cal V},B_0)$-module (respectively negatively graded $L(\mathfrak{g},\rho,V,{\cal V},B_0)$-module) the positive extension of $U$ with respect to $(\mathfrak{g},\rho,V,{\cal V},B_0)$ (respectively negative extension of $U$ with respect to $L(\mathfrak{g},\rho,V,{\cal V},B_0)$).
\end{theo}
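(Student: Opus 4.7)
The strategy mirrors the construction of $L := L(\g,\rho,V,\V,B_0)$ itself from its local data $V_{-1}\oplus V_0 \oplus V_1$, as summarized in Theorem \ref{th;univ_stap}: I would build $\tilde U^+$ inductively, one graded piece at a time, starting from $U$ at degree zero. Set $U_0^+ := U$, regarded as a module over the local part by letting $V_0=\g$ act via the given representation and letting $V_{-1}=\V$ act by zero. The action of $V_1$ on $U_0^+$ will be read off implicitly from the next level once it is defined.

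For $m \geq 1$, suppose $U_0^+,\dots,U_{m-1}^+$ together with compatible actions of $V_{-1},V_0,V_1$ have already been produced so that all Jacobi identities involving three arguments taken from $V_{-1}\oplus V_0\oplus V_1 \oplus \bigoplus_{k<m}U_k^+$ hold. The Jacobi identity $[\phi,[v,u]]=[[\phi,v],u]+[v,[\phi,u]]$ with $\phi\in V_{-1}$, $v\in V_1$, $u\in U_{m-1}^+$ forces the value of $\phi\cdot(v\cdot u)$, so I would \emph{define} a linear map
\[
\Psi_m:V_1\otimes U_{m-1}^+ \longrightarrow \Hom(V_{-1},U_{m-1}^+),\quad v\otimes u \longmapsto \bigl(\phi\mapsto v\cdot(\phi\cdot u) - [\phi,v]\cdot u\bigr),
\]
using the already-known actions at lower levels and the identity $[\phi,v]=-\Phi_\rho(v\otimes\phi)\in V_0$; then set $U_m^+ := \Psi_m(V_1\otimes U_{m-1}^+) \subseteq \Hom(V_{-1},U_{m-1}^+)$. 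The space $U_m^+$ inherits a $V_0$-action (the natural Hom-action) and a $V_{-1}$-action (evaluation), and the $V_1$-action $V_1\otimes U_{m-1}^+ \to U_m^+$ is just the tautological surjection $\Psi_m$.

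What remains is to check that these actions, extended inductively across all $m\geq 0$, assemble into an honest action of $L$. Since $L$ is generated by its local part and is characterized up to isomorphism by Theorem \ref{th;univ_stap}, it suffices to verify the Jacobi identities at every stage of the induction; the $\Phi$-map relation (\ref{defn;eq_stap_Phimap}) and the $\g$-equivariance of $\Phi_\rho$ furnish precisely the cancellations required. The two listed properties are then immediate: $\tilde\pi^+(V_1\otimes U_m^+) = U_{m+1}^+$ by construction as the image of $\Psi_{m+1}$, and the transitivity $\tilde\pi^+(V_{-1}\otimes u) = 0 \Rightarrow u = 0$ (for $m\geq 1$) is automatic since $U_m^+ \subset \Hom(V_{-1},U_{m-1}^+)$ with $V_{-1}$ acting by evaluation. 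Uniqueness up to isomorphism is then proved degree by degree, using surjectivity to construct a candidate morphism out of any other such extension, and transitivity to show that this morphism is injective. The negatively graded extension is constructed by the dual procedure, with the roles of $V_1$ and $V_{-1}$ swapped.

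The main obstacle I anticipate is the verification in the inductive step that $\Psi_m$ is consistent with the $V_0$-module structure already in place at level $m-1$, and that the Jacobi identities involving two elements of $V_1$ (or two of $V_{-1}$) together with an element of $U_{m-1}^+$ continue to hold. These compatibilities reduce, via careful bookkeeping, to the $\Phi$-map identity $B_0(a,\Phi_\rho(v\otimes\phi))=\langle\rho(a\otimes v),\phi\rangle$ and its immediate consequences, but tracking all signs and indices is where the real technical work lies; the cited results \cite[Theorems 3.12, 3.14, 3.17]{Sa3} together with Shen's general framework \cite[Theorem 1.2]{Shen} carry out exactly this bookkeeping.
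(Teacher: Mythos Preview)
The paper does not give its own proof of this theorem: it is stated with a citation to \cite[Theorems 3.12, 3.14, 3.17]{Sa3} and the remark that it is a special case of \cite[Theorem 1.2]{Shen}. Your inductive construction---defining $U_m^+$ as the image of $\Psi_m$ inside $\Hom(V_{-1},U_{m-1}^+)$ with $V_{-1}$ acting by evaluation---is exactly the approach the paper alludes to when it says (just before Theorem \ref{th;univ_stap}) that each $V_n$ for $|n|\ge 2$ ``is regarded as a submodule of $\Hom(V_{-1},V_{n-1})$ or $\Hom(V_1,V_{-n+1})$'', so your sketch is in line with the cited construction and there is nothing substantive to compare.
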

Note that a Lie algebra $L(\mathfrak{g},\rho,V,{\cal V},B_0)$ might have a representation which can not be written in the form of positive nor negative extensions.
Indeed, if $L(\mathfrak{g},\rho,V,{\cal V},B_0)$ is infinite-dimensional, then the adjoint representation of $L(\mathfrak{g},\rho,V,{\cal V},B_0)$ on $L(\mathfrak{g},\rho,V,{\cal V},B_0)$ itself cannot be written in the form of a positive  extension nor a negative extension.
\begin{pr}[\text{\cite [Proposition 3.18]{Sa3}}]\label{pr;p_g_n_g_oplus}
Under the notation of Theorem \ref{th;p_g_n_g}, we have isomorphisms of $L(\mathfrak{g},\rho,V,{\cal V},B_0)$-modules:
\begin{align*}
&\widetilde{(U\oplus U^{\prime })}^+\simeq \tilde{U}^+\oplus \tilde{U^{\prime }}^+,\quad \widetilde{(U\oplus U^{\prime })}^-\simeq \tilde{U}^-\oplus \tilde{U^{\prime }}^-
\end{align*}
for any $\mathfrak{g}$-modules $U$ and $U^{\prime }$.
\end{pr}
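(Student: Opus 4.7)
The plan is to invoke the uniqueness clause of Theorem \ref{th;p_g_n_g}: the positive extension $\widetilde{(U\oplus U')}^+$ is determined, up to isomorphism of graded $L(\g,\rho,V,\V,B_0)$-modules, by the three properties that its degree-$0$ component equals $U\oplus U'$, that $V_1$ acts surjectively from degree $m$ onto degree $m+1$ for every $m\geq 0$, and that $V_{-1}$ acts faithfully on each strictly positive homogeneous component. So it will suffice to exhibit $\tilde{U}^+\oplus\tilde{U'}^+$ as a positively graded $L(\g,\rho,V,\V,B_0)$-module satisfying these three conditions with respect to $U\oplus U'$, and the isomorphism will be forced.

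First, I would equip $\tilde{U}^+\oplus\tilde{U'}^+$ with the obvious grading
\begin{align*}
\tilde{U}^+\oplus\tilde{U'}^+=\bigoplus_{m\geq 0}\bigl(U_m^+\oplus {U'}_m^+\bigr)
\end{align*}
and the componentwise action $\tilde{\pi}^+\boxplus\tilde{\pi'}^+$ of $L(\g,\rho,V,\V,B_0)$; this is a graded module structure because it is so on each summand. The degree-$0$ component is then $U_0^+\oplus {U'}_0^+=U\oplus U'$, giving the first property at once.

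For the surjectivity of $V_1$ from degree $m$ to degree $m+1$, I would take an arbitrary element $(u,u')\in U_{m+1}^+\oplus {U'}_{m+1}^+$ and use the second characterizing property of each of $\tilde{U}^+$ and $\tilde{U'}^+$ to write $u$ (respectively $u'$) as a sum of terms of the form $\tilde{\pi}^+(x\otimes y)$ with $x\in V_1$ and $y\in U_m^+$ (respectively $\tilde{\pi'}^+(x\otimes y')$ with $y'\in {U'}_m^+$); combining these presentations inside $\tilde{U}^+\oplus\tilde{U'}^+$ expresses $(u,u')$ in the required form. For the transitivity clause, suppose $(u,u')\in U_m^+\oplus {U'}_m^+$ with $m\geq 1$ is annihilated by all of $V_{-1}$ under the componentwise action; since that action preserves each summand, this forces $V_{-1}\cdot u=0$ in $\tilde{U}^+$ and $V_{-1}\cdot u'=0$ in $\tilde{U'}^+$, whence $u=0$ and $u'=0$ by the third characterizing property of each extension separately.

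The three conditions now hold, so uniqueness in Theorem \ref{th;p_g_n_g} delivers a graded isomorphism $\widetilde{(U\oplus U')}^+\simeq \tilde{U}^+\oplus\tilde{U'}^+$. The negative-extension statement is handled by the same argument with the roles of $V_1$ and $V_{-1}$ interchanged and all positive degree indices replaced by non-positive ones. I do not expect a genuine obstacle here: the whole content of the proposition is that the three characterizing conditions pass through direct sums componentwise, and each verification is immediate from the corresponding property of the summands. The only point worth stating carefully is that the componentwise action is in fact an $L(\g,\rho,V,\V,B_0)$-module action, which follows from the standard fact that a direct sum of modules over any Lie algebra is again a module.
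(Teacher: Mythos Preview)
Your argument is correct. The paper does not supply its own proof of this proposition; it merely cites \cite[Proposition 3.18]{Sa3}. Your approach---verifying that $\tilde{U}^+\oplus\tilde{U'}^+$ with the componentwise grading and action satisfies the three characterizing properties of Theorem~\ref{th;p_g_n_g} and then invoking uniqueness---is exactly the natural route and is presumably what the cited reference does as well.
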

\begin{pr}[\text{\cite [Proposition 3.15]{Sa3}}]\label{pr;p_g_n_g_irr}
Under the notation of Theorem \ref{th;p_g_n_g}, we have that the positive extension of a $\mathfrak{g}$-module $U$ with respect to $(\mathfrak{g},\rho,V,{\cal V},B_0)$ is $L(\mathfrak{g},\rho,V,{\cal V},B_0)$-irreducible if and only if $U$ is $\mathfrak{g}$-irreducible.
\end{pr}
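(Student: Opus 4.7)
The plan is to exploit the two defining properties of the positive extension from Theorem \ref{th;p_g_n_g}: the transitivity bullet (if $u \in U_m^+$ with $m \geq 1$ satisfies $\tilde{\pi}^+(V_{-1} \otimes u) = 0$ then $u = 0$), and the surjectivity bullet ($\tilde{\pi}^+(V_1 \otimes U_m^+) = U_{m+1}^+$). Throughout, write $L := L(\mathfrak{g},\rho,V,{\cal V},B_0) = \bigoplus_{n\in\mathbb{Z}} V_n$.

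For the ``if'' direction, assume $U$ is $\mathfrak{g}$-irreducible and let $W$ be a nonzero $L$-submodule of $\tilde{U}^+$. Among all nonzero $w \in W$, written as $w = w_0 + w_1 + \cdots + w_N$ with $w_j \in U_j^+$ and $w_N \neq 0$, pick one whose top grade $N$ is minimal — this minimum exists since all grades are nonnegative. If $N \geq 1$, then for every $v \in V_{-1}$ the element $\tilde{\pi}^+(v \otimes w) \in W$ has top grade at most $N-1$, so by minimality it must vanish; comparing its grade-$(N-1)$ component yields $\tilde{\pi}^+(V_{-1} \otimes w_N) = 0$, and transitivity forces $w_N = 0$, a contradiction. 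Hence $N = 0$, so $W \cap U_0^+$ is a nonzero $\mathfrak{g}$-submodule of $U$ and therefore equals $U$ by irreducibility. Iterating the surjectivity $\tilde{\pi}^+(V_1 \otimes U_m^+) = U_{m+1}^+$ now gives $U_m^+ \subseteq W$ for every $m \geq 0$, so $W = \tilde{U}^+$.

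For the ``only if'' direction I argue contrapositively. Given a proper nonzero $\mathfrak{g}$-submodule $U' \subsetneq U$, define inductively $W_0 := U'$ and $W_m := \tilde{\pi}^+(V_1 \otimes W_{m-1})$ for $m \geq 1$, and put $W := \bigoplus_{m \geq 0} W_m$. A simultaneous induction on $m$, using the bracket relations $[V_0, V_1] \subseteq V_1$ and $[V_{-1}, V_1] \subseteq V_0$ together with the base-case data $V_{-1} \cdot U' = 0$ and $V_0 \cdot U' \subseteq U'$, shows that $V_{-1} \cdot W_m \subseteq W_{m-1}$ and $V_0 \cdot W_m \subseteq W_m$. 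Combined with the defining $V_1 \cdot W_m \subseteq W_{m+1}$ and the fact that $V_{-1} \oplus V_0 \oplus V_1$ generates $L$, this establishes that $W$ is an $L$-submodule of $\tilde{U}^+$. Since $W_0 = U' \subsetneq U = U_0^+$, the submodule $W$ is proper and nonzero, so $\tilde{U}^+$ is reducible.

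The main subtlety lies in the ``if'' direction: a submodule of a $\mathbb{Z}$-graded module need not itself be graded, so the transitivity axiom cannot be applied directly to $W$. The ``minimal top grade'' trick converts transitivity into an effective tool for an arbitrary (possibly ungraded) submodule; the rest is routine induction along the grading.
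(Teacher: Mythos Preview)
Your proof is correct. Note, however, that the paper does not actually supply a proof of this proposition: it is stated with a citation to \cite[Proposition 3.15]{Sa3} and no argument is given in the present text, so there is no in-paper proof to compare against.

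On its own merits your argument is sound. The ``if'' direction correctly handles the subtlety that an $L$-submodule $W$ need not be graded a priori: your minimal-top-grade trick, combined with the transitivity property of $\tilde{U}^+$ from Theorem~\ref{th;p_g_n_g}, forces $W$ to meet $U_0^+$ nontrivially, after which irreducibility of $U$ and the surjectivity $\tilde{\pi}^+(V_1\otimes U_m^+)=U_{m+1}^+$ finish the job. For the ``only if'' direction, your inductive construction of $W=\bigoplus_{m\ge 0}W_m$ from a proper $\mathfrak{g}$-submodule $U'$ is the natural one; the verification that $V_{-1}\cdot W_m\subseteq W_{m-1}$ and $V_0\cdot W_m\subseteq W_m$ via the module identity $y\cdot(x\cdot w)=[y,x]\cdot w + x\cdot(y\cdot w)$ is straightforward, and since $W\cap U_0^+=U'\subsetneq U_0^+$ the submodule is proper. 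This is essentially the standard argument for results of this type (cf.\ the Shen reference \cite{Shen} underlying Theorem~\ref{th;p_g_n_g}).
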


\subsection {Standard pentads equipped with a symmetric bilinear form}
In the previous section, we obtained that there exists a graded Lie algebra $L(\mathfrak{g},\rho,V,{\cal V},B_0)$ for a given standard pentad $(\mathfrak{g},\rho,V,{\cal V},B_0)$ such that the objects $\mathfrak{g},(\rho ,V), (\varrho ,{\cal V})$ can be embedded into it (Theorem \ref {theo;stapLie}).
To prove this theorem, we do not need the assumption that a bilinear form in a given pentad is symmetric.
However, if we assume the symmetricity of a bilinear form, we can obtain some useful properties of $L(\mathfrak{g},\rho,V,{\cal V},B_0)$.
For example, besides $\mathfrak{g},(\rho ,V), (\varrho ,{\cal V})$, we can embed the bilinear form $B_0$ into $L(\mathfrak{g},\rho,V,{\cal V},B_0)$ whenever $B_0$ is symmetric (Proposition \ref{pr;stap_bilinear_exist}).
\begin{defn}\label {defn;symstap}
Let $(\mathfrak{g},\rho,V,{\cal V},B_0)$ be a standard pentad.
We say that the pentad $(\mathfrak{g},\rho,V,{\cal V},B_0)$ is {\it symmetric} if and only if the bilinear form $B_0$ is a symmetric bilinear form.
\end{defn}
In this section, we shall study properties of symmetric standard pentads and corresponding Lie algebras.
\begin{pr}[\text{\cite [Proposition 2.18]{Sa3}}]\label{pr;stap_bilinear_exist}
Let $(\mathfrak{g},\rho,V,{\cal V},B_0)$ be a symmetric standard pentad.
Then there exists a non-degenerate symmetric  $L(\mathfrak{g},\rho,V,{\cal V},B_0)$-invariant bilinear form $B_0^L$ on $L(\mathfrak{g},\rho,V,{\cal V},B_0)=\bigoplus _{n\in \mathbb{Z}}V_n$ satisfying the following equations
\begin{align*}
B_0^L(x_0,y_0)=B_0(x_0,y_0),\quad B_0^L(x_1,y_{-1})=\langle x_1,y_{-1}\rangle ,\quad B_0^L(x_n,y_m)=0\quad (n+m\neq 0)
\end{align*}
for any $n,m\in \mathbb{Z}$ and $x_n\in V_n$, $y_m\in V_m$.
\end{pr}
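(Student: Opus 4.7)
The plan is to construct $B_0^L$ inductively on the absolute value of the grading, with the local part $V_{-1}\oplus V_0\oplus V_1$ as the base case and invariance as the extension mechanism. On the local part, define $B_0^L$ by the three formulas in the statement; this is symmetric and non-degenerate because $B_0$ is symmetric (by hypothesis) and the canonical pairing $\langle\cdot,\cdot\rangle$ is non-degenerate on $V\times\V$ by SP1. The key local invariance identity $B_0^L([a,v],\phi)=B_0^L(a,[v,\phi])$ for $a\in V_0,\ v\in V_1,\ \phi\in V_{-1}$ is precisely the defining equation (\ref{defn;eq_stap_Phimap}) of the $\Phi$-map; the remaining local cases follow from the $\g$-invariance of $B_0$ and of $\langle\cdot,\cdot\rangle$.

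For $|n|\ge 2$, recall from Theorem \ref{th;univ_stap}(i) that $V_n=[V_1,V_{n-1}]$ and $V_{-n}=[V_{-1},V_{-n+1}]$. Extend the form by forcing invariance: set
\begin{align*}
B_0^L([v,x_{n-1}],y_{-n}):=B_0^L(v,[x_{n-1},y_{-n}])
\end{align*}
for $v\in V_1$, $x_{n-1}\in V_{n-1}$, $y_{-n}\in V_{-n}$, and declare $B_0^L(V_n,V_m)=0$ whenever $n+m\ne 0$. Since $[x_{n-1},y_{-n}]\in V_{-1}$, the right-hand side is defined by the inductive hypothesis.

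The main obstacle is well-definedness, since a given $x_n\in V_n$ admits many expressions $x_n=\sum_i[v_i,x_{n-1}^i]$. I would handle this by exploiting the identification $V_n\hookrightarrow \Hom(V_{-1},V_{n-1})$, $x_n\mapsto [x_n,\cdot]$, built into the construction of $L(\g,\rho,V,\V,B_0)$ (the remark after Theorem \ref{theo;stapLie}): the vanishing of $\sum_i[v_i,x_{n-1}^i]$ in $V_n$ is equivalent, by Jacobi, to
\begin{align*}
\sum_i\bigl([v_i,[x_{n-1}^i,\phi]]-[x_{n-1}^i,[v_i,\phi]]\bigr)=0 \quad\text{in } V_{n-1}\ \text{for every}\ \phi\in V_{-1}.
\end{align*}
Pairing this identity against an element of $V_{-n+1}$ via the form at grading $n-1$ (which, by induction, already exists, is symmetric, and satisfies invariance) and then invoking the local $\Phi$-map relation yields $\sum_i B_0^L(v_i,[x_{n-1}^i,y_{-n}])=0$, as required. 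Symmetry is established jointly: running the analogous recursion using $V_{-n}=[V_{-1},V_{-n+1}]$ produces a second candidate form, and one checks by induction that the two candidates coincide; this uses the symmetry already proved at grading $n-1$.

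Invariance $B_0^L([a,b],c)=B_0^L(a,[b,c])$ for arbitrary homogeneous $a,b,c$ then follows by induction on the grades of $b$ and $c$, with the defining recursion providing the base step and the Jacobi identity propagating invariance upward. Finally, non-degeneracy of $B_0^L$ on $V_n\times V_{-n}$ is a direct consequence of the transitivity conditions (iv) and (v) of Theorem \ref{th;univ_stap}: if $B_0^L(x_n,V_{-n})=0$ for some $x_n\in V_n$ with $n\ge 2$, then by the recursive definition and the non-degeneracy at grading $n-1$, $[x_n,\phi]=0$ for every $\phi\in V_{-1}$, and condition (iv) then forces $x_n=0$; the case $n\le -2$ is symmetric, and the cases $|n|\le 1$ are covered by the base step.
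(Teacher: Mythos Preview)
The paper does not supply its own proof of this proposition; it is stated with a citation to \cite[Proposition 2.18]{Sa3} and no argument is given in the present paper. So there is nothing to compare your proposal against here.

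That said, your outline is the standard one for this type of result and is essentially what one expects the cited reference to do: define the form on the local part, extend by the invariance recursion $B_0^L([v,x_{n-1}],y_{-n})=B_0^L(v,[x_{n-1},y_{-n}])$, and verify well-definedness, symmetry, invariance, and non-degeneracy by induction on $|n|$. Your use of the embedding $V_n\hookrightarrow \Hom(V_{-1},V_{n-1})$ to control well-definedness and of the transitivity conditions (iv)--(v) of Theorem~\ref{th;univ_stap} to obtain non-degeneracy are exactly the right ingredients. The well-definedness step is the most delicate part, and your sketch compresses a nontrivial double induction (you need invariance at level $n-1$ not just with respect to $V_{\pm 1}$ but with respect to $V_0$ as well, and you need to decompose $y_{-n}$ as a sum of brackets $[\phi,y_{-n+1}]$ and apply Jacobi once more), but the strategy is sound and the missing bookkeeping is routine.
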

If a standard pentad $(\mathfrak{g},\rho,V,{\cal V},B_0)$ is symmetric, we can characterize graded Lie algebras of the form $L(\mathfrak{g},\rho,V,{\cal V},B_0)$ by the existence such a bilinear form.
\begin{theo}\label {th;universality_stap}
Let $\mathfrak{L}=\bigoplus _{n\in\mathbb{Z}}\mathfrak{L}_n$ be a graded Lie algebra which has a non-degenerate symmetric invariant bilinear form $B_{\mathfrak{L}}$.
If $\mathfrak{L}$ and $B_{\mathfrak{L}}$ satisfy the following conditions ${\mathrm {(i)}^{\prime }}$ and ${\mathrm {(ii)}^{\prime }}$, then a pentad $(\mathfrak{L}_0,{\rm ad},\mathfrak{L}_1,\mathfrak{L}_{-1},B_{\mathfrak{L}}\mid _{\mathfrak{L}_0\times \mathfrak{L}_0})$ is standard such that the corresponding graded Lie algebra $L(\mathfrak{L}_0,{\rm ad},\mathfrak{L}_1,\mathfrak{L}_{-1},B_{\mathfrak{L}}\mid _{\mathfrak{L}_0\times \mathfrak{L}_0})$ is isomorphic to $\mathfrak{L}$:
\begin{itemize}
\item [${\mathrm {(i)}^{\prime }}$]{$\mathfrak{L}_{i+1}=[\mathfrak{L}_1,\mathfrak{L}_i]$, $\mathfrak{L}_{-i-1}=[\mathfrak{L}_{-1},\mathfrak{L}_{-i}]$ for all $i\geq 1$,}
\item [${\mathrm {(ii)}^{\prime }}$]{the restriction of $B_{\mathfrak{L}}$ to $\mathfrak{L}_i\times \mathfrak{L}_{-i}$ is non-degenerate for any $i\geq 0$,}
\end{itemize}
where $\mathrm{ad}$ stands for the adjoint representation of $\mathfrak{L}$ on itself
(cf. \cite [Proposition 3.3]{Sa}).
\end{theo}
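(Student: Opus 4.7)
The plan is to deduce Theorem \ref{th;universality_stap} from Theorem \ref{th;univ_stap} by checking that $\mathfrak{L}$ together with the restriction of $B_{\mathfrak{L}}$ to the local part $\widehat{\mathfrak{L}} = \mathfrak{L}_{-1} \oplus \mathfrak{L}_0 \oplus \mathfrak{L}_1$ satisfies all five conditions (i)--(v) of that theorem. Conditions (i), (ii) and (iii) are essentially free: (i) of Theorem \ref{th;univ_stap} is exactly $\mathrm{(i)}^{\prime}$; the non-degeneracy in (ii) for $i = 0, 1$ is part of $\mathrm{(ii)}^{\prime}$, and the $\mathfrak{L}_0$-invariance of the restrictions follows immediately from the $\mathfrak{L}$-invariance of $B_{\mathfrak{L}}$; (iii) is also just invariance applied to the triple $(a, x, y) \in \mathfrak{L}_0 \times \mathfrak{L}_1 \times \mathfrak{L}_{-1}$.

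The substantive step is verifying the transitivity conditions (iv) and (v), which is where the hypothesis $\mathrm{(ii)}^{\prime}$ for $|i| \geq 2$ gets used. I would argue (iv) as follows: fix $i \geq 2$ and $x \in \mathfrak{L}_i$ with $[x, \mathfrak{L}_{-1}] = 0$. By $\mathrm{(i)}^{\prime}$ every element of $\mathfrak{L}_{-i}$ is a linear combination of brackets $[y, z]$ with $y \in \mathfrak{L}_{-1}$ and $z \in \mathfrak{L}_{-i+1}$. Invariance then gives
\begin{equation*}
B_{\mathfrak{L}}(x, [y, z]) = B_{\mathfrak{L}}([x, y], z) = 0,
\end{equation*}
so $B_{\mathfrak{L}}(x, \mathfrak{L}_{-i}) = 0$, and the non-degeneracy of $B_{\mathfrak{L}}$ on $\mathfrak{L}_i \times \mathfrak{L}_{-i}$ given by $\mathrm{(ii)}^{\prime}$ forces $x = 0$. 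Condition (v) is symmetric, using the other half of $\mathrm{(i)}^{\prime}$ and $\mathrm{(ii)}^{\prime}$.

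Having established (i)--(v) of Theorem \ref{th;univ_stap} for $\mathfrak{L}$ and $B_{\widehat{\mathfrak{L}}} := B_{\mathfrak{L}}|_{\widehat{\mathfrak{L}}}$, that theorem directly yields that $(\mathfrak{L}_0, \mathrm{ad}, \mathfrak{L}_1, \mathfrak{L}_{-1}, B_{\mathfrak{L}}|_{\mathfrak{L}_0 \times \mathfrak{L}_0})$ is a standard pentad and that $\mathfrak{L} \simeq L(\mathfrak{L}_0, \mathrm{ad}, \mathfrak{L}_1, \mathfrak{L}_{-1}, B_{\mathfrak{L}}|_{\mathfrak{L}_0 \times \mathfrak{L}_0})$ as graded Lie algebras, completing the proof. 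The main obstacle is the transitivity argument above; everything else is formal bookkeeping, and the key technical input is that the non-degenerate invariant pairing on each $\mathfrak{L}_i \times \mathfrak{L}_{-i}$ upgrades the local-part non-degeneracy of Theorem \ref{th;univ_stap} to a global statement, removing the need to verify (iv) and (v) by hand from the module-theoretic construction used in that earlier result.
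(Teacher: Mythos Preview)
Your proposal is correct and matches the paper's own proof essentially line for line: both reduce to Theorem~\ref{th;univ_stap}, note that (i)--(iii) are immediate from $\mathrm{(i)}'$, $\mathrm{(ii)}'$ and invariance, and derive (iv) (and symmetrically (v)) by pairing $x\in\mathfrak{L}_i$ against $[\mathfrak{L}_{-1},\mathfrak{L}_{-i+1}]=\mathfrak{L}_{-i}$ via invariance and then invoking the non-degeneracy of $B_{\mathfrak{L}}|_{\mathfrak{L}_i\times\mathfrak{L}_{-i}}$.
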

\begin{proof}
It is sufficient to show that the graded Lie algebra $\mathfrak{L}=\bigoplus _{n\in \mathbb{Z}}\mathfrak{L}_n$ and a bilinear form $B_{\mathfrak{L}}\mid _{\mathfrak{L}_{-1}\oplus \mathfrak{L}_0\oplus \mathfrak{L}_1}$ satisfy the conditions from {\rm (i)} to {\rm (v)} in Theorem \ref{th;univ_stap}.
The conditions {\rm (i)}, {\rm (ii)} and {\rm (iii)} are immediate from ${\mathrm {(i)}^{\prime }}$,  ${\mathrm {(ii)}^{\prime }}$ and the assumption that $B_{\mathfrak{L}}$ is invariant.
Suppose that $i\geq 2$ and that an element $x_i\in \mathfrak{L}_i$ satisfies $[x_i,\xi_{-1}]=0$ for any $\xi _{-1}\in \mathfrak{L}_{-1}$.
Then we have an equation
$$
0=B_{\mathfrak{L}}([x_i,\xi _{-1}], \zeta _{-i+1})=B_{\mathfrak{L}}(x_i,[\xi _{-1}, \zeta _{-i+1}])
$$
for any $\xi _{-1}\in \mathfrak{L}_{-1}$ and $\zeta _{-i+1}\in \mathfrak{L}_{-i+1}$.
From the assumptions ${\mathrm {(i)}^{\prime }}$ and ${\mathrm {(ii)}^{\prime }}$, we have that $x_i=0$.
Thus, we have {\rm (iv)}.
Similarly, we can show {\rm (v)}.
\end{proof}
Under the situation of Proposition \ref{pr;stap_bilinear_exist}, we can expect that an $L(\mathfrak{g},\rho,V,{\cal V},B_0)$-module of the form $\tilde{U}^+$ defined in Theorem \ref{th;p_g_n_g} can be embedded into some graded Lie algebra using the bilinear form $B_0^L$.
Indeed, under some assumptions on a $\mathfrak{g}$-module $U$, we can construct a graded Lie algebra contains $L(\mathfrak{g},\rho,V,{\cal V},B_0)$ and $\tilde{U}^+$.
\begin{theo}[\text{chain rule, \cite [Theorem 3.26]{Sa3}}]\label{th;chain}
Let $(\mathfrak{g},\rho,V,{\cal V},B_0)$ and $(\mathfrak{g},\pi,U,{\cal U},B_0)$ be symmetric standard pentads.
Then a pentad $(L(\mathfrak{g},\rho,V,{\cal V},B_0),\tilde{\pi }^+,\tilde{U}^+,\tilde{\cal U}^-,B_0^L)$ is standard, and moreover, the corresponding Lie algebra is isomorphic to $L(\mathfrak{g},\rho \oplus \pi,V\oplus U,{\cal V}\oplus {\cal U},B_0)$ up to gradation:
\begin{align}
L(L(\mathfrak{g},\rho,V,{\cal V},B_0),\tilde{\pi }^+,\tilde{U}^+,\tilde{\cal U}^-,B_0^L)\simeq L(\mathfrak{g},\rho \oplus \pi,V\oplus U,{\cal V}\oplus {\cal U},B_0).\label {eq;chain}
\end{align}
\end{theo}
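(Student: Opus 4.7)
The plan is to realize both sides of (\ref{eq;chain}) as the same graded Lie algebra carrying two different $\mathbb{Z}$-gradations. Set $\mathfrak{M}:=L(\mathfrak{g},\rho\oplus\pi,V\oplus U,\mathcal{V}\oplus\mathcal{U},B_0)=\bigoplus_{n\in\mathbb{Z}}\mathfrak{M}_n$. One first checks that the combined pentad is itself symmetric standard: SP1 and SP2 hold component-wise because the cross-pairings between $V$ and $\mathcal{U}$, and between $U$ and $\mathcal{V}$, vanish, so that $\Phi_{\rho\oplus\pi}((v,u)\otimes(\phi,\psi))=\Phi_\rho(v\otimes\phi)+\Phi_\pi(u\otimes\psi)$. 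By Proposition \ref{pr;stap_bilinear_exist}, $\mathfrak{M}$ carries a non-degenerate symmetric invariant bilinear form $B^\mathfrak{M}$ extending $B_0$ and the local pairings.

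Next I would introduce a second $\mathbb{Z}$-grading on $\mathfrak{M}$, the \emph{$U$-weight}, by assigning weight $0$ to $\mathfrak{g}$, $V$, $\mathcal{V}$, weight $+1$ to $U$, and weight $-1$ to $\mathcal{U}$, and extending to the higher components $\mathfrak{M}_n$ ($|n|\geq 2$) through the inductive construction from the local part, which preserves any bigrading of the generators. Call the resulting decomposition $\mathfrak{M}=\bigoplus_{k\in\mathbb{Z}}\mathfrak{M}^{[k]}$; together with the original grading it yields a bigrading $\mathfrak{M}_n=\bigoplus_{i+j=n}\mathfrak{M}^{(i,j)}$. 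Since $B^\mathfrak{M}$ itself has $U$-weight zero, $B^\mathfrak{M}(\mathfrak{M}^{(i,j)},\mathfrak{M}^{(i',j')})=0$ unless $(i',j')=(-i,-j)$, and the non-degeneracy of $B^\mathfrak{M}$ on $\mathfrak{M}_n\times\mathfrak{M}_{-n}$ decomposes into non-degeneracy on each pair of bigraded pieces.

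The crux of the argument is identifying $\mathfrak{M}^{[0]}$ with $L(\mathfrak{g},\rho,V,\mathcal{V},B_0)$ and $\mathfrak{M}^{[\pm 1]}$ with the extensions $\tilde{U}^+$ and $\tilde{\mathcal{U}}^-$. The weight-zero subalgebra $\mathfrak{M}^{[0]}$ is generated by $\mathfrak{g}\oplus V\oplus\mathcal{V}$ and inherits a $\mathbb{Z}$-grading satisfying conditions ${\mathrm{(i)}^\prime}$ and ${\mathrm{(ii)}^\prime}$ of Theorem \ref{th;universality_stap} with respect to the restricted form, so it is isomorphic to $L(\mathfrak{g},\rho,V,\mathcal{V},B_0)$ and the restricted bilinear form coincides with $B_0^L$ by the uniqueness clause of Proposition \ref{pr;stap_bilinear_exist}. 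For $\mathfrak{M}^{[+1]}$, its bottom layer (original degree $1$) is $U$, the higher layers are spanned by brackets with $V$, and the transitivity clause required in Theorem \ref{th;p_g_n_g} transfers directly from the transitivity built into $\mathfrak{M}$; these are exactly the properties characterizing $\tilde{U}^+$. Analogously $\mathfrak{M}^{[-1]}\simeq\tilde{\mathcal{U}}^-$.

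With these identifications, I would apply Theorem \ref{th;universality_stap} to $\mathfrak{M}$ equipped with the $U$-weight grading and the form $B^\mathfrak{M}$. Condition ${\mathrm{(i)}^\prime}$ for this grading asks $\mathfrak{M}^{[k+1]}=[\mathfrak{M}^{[1]},\mathfrak{M}^{[k]}]$ for $k\geq 1$, which follows from the Jacobi identity: every bracket word on the original local generators having $U$-weight $k+1\geq 2$ contains at least one $U$-generator that can be pulled to the outside. Condition ${\mathrm{(ii)}^\prime}$ is the bigraded orthogonality already noted. The conclusion of Theorem \ref{th;universality_stap} then simultaneously yields the standardness of $(L(\mathfrak{g},\rho,V,\mathcal{V},B_0),\tilde{\pi}^+,\tilde{U}^+,\tilde{\mathcal{U}}^-,B_0^L)$ and the isomorphism (\ref{eq;chain}). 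The main obstacle I anticipate is the careful identification of $\mathfrak{M}^{[\pm 1]}$ with the extensions $\tilde{U}^+$ and $\tilde{\mathcal{U}}^-$: one must track how the inductive construction of the higher components of $\mathfrak{M}$ interacts with the $U$-weight, and verify both the generation property and the transitivity clause in the bigraded setting. Once those identifications are secured, the remainder of the argument is essentially formal.
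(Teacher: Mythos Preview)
The paper itself does not prove this theorem: it is quoted verbatim from \cite[Theorem~3.26]{Sa3} and no argument is supplied here, so there is no in-paper proof to compare against. What can be said is whether your outline is a viable route to the result.

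Your bigrading strategy is the natural one, and the identification of $\mathfrak{M}^{[0]}$ with $L(\mathfrak{g},\rho,V,\mathcal{V},B_0)$ goes through cleanly: since every element of $\mathfrak{M}_n$ with $n\ge 1$ is an iterated bracket of elements of $\mathfrak{M}_1=V\oplus U$, an element of $U$-weight~$0$ and positive original degree must be a bracket of $V$'s alone, giving $\mathfrak{M}^{(n,0)}=[V,\mathfrak{M}^{(n-1,0)}]$ directly. You are also right that the substantive work is the identification $\mathfrak{M}^{[\pm 1]}\simeq\tilde{U}^{\pm}$. Here two things must be checked, and neither is automatic. First, the generation condition $\mathfrak{M}^{(n,1)}=[V,\mathfrak{M}^{(n-1,1)}]$ for $n\ge 2$: the bigraded decomposition only gives $\mathfrak{M}^{(n,1)}=[V,\mathfrak{M}^{(n-1,1)}]+[U,\mathfrak{M}^{(n-1,0)}]$, and showing that the second summand is already contained in the first does not follow from a naive Jacobi induction (the induction does not close). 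One way through is to pair $[V,\mathfrak{M}^{(n-1,1)}]$ against $\mathfrak{M}^{(-n,-1)}$ via $B^{\mathfrak{M}}$ and use the built-in transitivity of $\mathfrak{M}$ together with the non-degeneracy on each bigraded pair, arguing simultaneously with the analogous statement on the negative side. Second, the transitivity clause for $\mathfrak{M}^{[1]}$ asks that $[x,\mathcal{V}]=0$ force $x=0$ for $x\in\mathfrak{M}^{(n,1)}$, $n\ge 2$; the transitivity of $\mathfrak{M}$ only gives this when $[x,\mathcal{V}\oplus\mathcal{U}]=0$, so again the bilinear form is needed to rule out the residual $[x,\mathcal{U}]$ contribution.

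A related point you pass over quickly: condition ${\mathrm{(i)}^\prime}$ of Theorem~\ref{th;universality_stap} for the $U$-weight grading, namely $\mathfrak{M}^{[k+1]}=[\mathfrak{M}^{[1]},\mathfrak{M}^{[k]}]$ for $k\ge 1$, is not purely formal either; ``pulling a $U$-generator to the outside'' via Jacobi again produces extra terms that must be absorbed. In short, your plan is correct and you have located the genuine difficulty; what remains is to actually execute the bigraded generation/transitivity arguments, most likely by systematic use of the invariant form $B^{\mathfrak{M}}$ rather than by combinatorics of bracket words.
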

We call Theorem \ref{th;chain} chain rule of standard pentads and will use this isomorphism (\ref {eq;chain}) frequently in section 3.
The reason why we have assumed that $B_0$ is symmetric in Theorem \ref{th;chain} is to obtain a bilinear form $B_0^L$ on $L(\mathfrak{g},\rho,V,{\cal V},B_0)$.
On the other hands, the right hand side $L(\mathfrak{g},\rho \oplus \pi,V\oplus U,{\cal V}\oplus {\cal U},B_0)$ of (\ref {eq;chain}) is well-defined for standard pentads $(\mathfrak{g},\rho,V,{\cal V},B_0)$ and $(\mathfrak{g},\pi,U,{\cal U},B_0)$ independent to the symmetricity of $B_0$.

\subsection {Standard pentads and prehomogeneous vector spaces of parabolic type}\label {stap and pv}
We shall consider a class of symmetric standard pentads which correspond to finite-dimensional semisimple Lie algebras.
For this, we need some notion and notations from the theory of prehomogeneous vector spaces of parabolic type, due to H.Rubenthaler.
\par 
For detail of the terms and results in this paragraph, see \cite {ru-1} or \cite {ru-3}.
Let $\mathfrak{g}$ be an arbitrary finite-dimensional semisimple Lie algebra, $\mathfrak{h}$ a Cartan subalgebra of $\mathfrak{g}$, $R$ the root system with respect to $(\mathfrak{g},\mathfrak{h})$, $\psi $ a fundamental system of $R$ all defined over $\mathbb{C}$.
Let $\theta $ be a subset of $\psi $ and define an element $H^{\theta }\in \mathfrak{h}$ satisfying
$$
\alpha (H^{\theta })=\begin{cases}0&(\alpha \in \theta )\\2&(\alpha \in \psi \setminus \theta ).\end{cases}
$$
This element $H^{\theta }$ induces a gradation of $\mathfrak{g}$ as
$$
\mathfrak{g}=\bigoplus _{n\in\mathbb{Z}}d_n(\theta )\quad \text{where $
d_n(\theta )=\{X\in \mathfrak{g}\mid [H^{\theta },X]=2nX\},
$}
$$
and is called a grading element.
It is known that the vector space $d_0(\theta )$ is a finite-dimensional reductive Lie algebra and that the representation of $d_0(\theta )$ on $d_1(\theta )$ induces a prehomogeneous vector space, called a prehomogeneous vector space of parabolic type.
Denote the Killing form of $\mathfrak{g}$ by $K_{\mathfrak{g}}$.
The restriction of $K_{\mathfrak{g}}$ to $d_i(\theta )\times d_{-i}(\theta )$ is non-degenerate for any $i\in\mathbb{Z}$, in particular, $d_0(\theta) $-modules $d_1(\theta )$ and $d_{-1}(\theta )$ are the dual modules of each other via $K_{\mathfrak{g}}$.
\par 
So, we have a standard pentad $(d_0(\theta ),\ad, d_1(\theta ),d_{-1}(\theta ),K_{\mathfrak{g}})$.
It is easy to show that the graded Lie algebra $\mathfrak{g}=\bigoplus _{n\in \mathbb{Z}}d_n(\theta )$ and the symmetric bilinear from $K_{\mathfrak{g}}$ satisfy the assumptions of Theorem \ref{th;universality_stap}.
Thus, we have the following proposition.
\begin{pr}[\text{prehomogeneous vector spaces of parabolic type}]\label {pr;parabolic}
We have an isomorphism of Lie algebras
$$
L(d_0(\theta ),\ad, d_1(\theta ),d_{-1}(\theta ),K_{\mathfrak{g}})\simeq \bigoplus _{n\in \mathbb{Z}}d_n(\theta )=\mathfrak{g}
$$
up to gradation.
\end{pr}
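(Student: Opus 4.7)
The plan is to invoke Theorem \ref{th;universality_stap} with $\mathfrak{L}=\mathfrak{g}=\bigoplus_{n\in\mathbb{Z}}d_n(\theta)$ and the invariant bilinear form $B_{\mathfrak{L}}=K_{\mathfrak{g}}$. Because $\mathfrak{g}$ is semisimple, $K_{\mathfrak{g}}$ is a non-degenerate symmetric invariant bilinear form, so the only thing to check is that conditions $(\mathrm{i})'$ and $(\mathrm{ii})'$ of that theorem hold for this grading. Once they do, Theorem \ref{th;universality_stap} yields both the standardness of $(d_0(\theta),\ad,d_1(\theta),d_{-1}(\theta),K_{\mathfrak{g}})$ and an isomorphism $L(d_0(\theta),\ad,d_1(\theta),d_{-1}(\theta),K_{\mathfrak{g}})\simeq \mathfrak{g}$ of graded Lie algebras.

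For $(\mathrm{ii})'$, I would first observe that $d_i(\theta)$ and $d_j(\theta)$ are $K_{\mathfrak{g}}$-orthogonal whenever $i+j\neq 0$. Indeed, for $X\in d_i(\theta)$ and $Y\in d_j(\theta)$, the invariance of $K_{\mathfrak{g}}$ gives
\begin{align*}
2i\,K_{\mathfrak{g}}(X,Y)=K_{\mathfrak{g}}([H^{\theta},X],Y)=-K_{\mathfrak{g}}(X,[H^{\theta},Y])=-2j\,K_{\mathfrak{g}}(X,Y),
\end{align*}
so $(i+j)K_{\mathfrak{g}}(X,Y)=0$. Since $K_{\mathfrak{g}}$ is non-degenerate on $\mathfrak{g}$ and $\mathfrak{g}=\bigoplus_n d_n(\theta)$ is a grading, this forces the restriction of $K_{\mathfrak{g}}$ to $d_i(\theta)\times d_{-i}(\theta)$ to be non-degenerate for every $i\geq 0$.

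For $(\mathrm{i})'$, I would use the root space decomposition. Each $d_n(\theta)$ with $n\geq 1$ is the direct sum of the root spaces $\mathfrak{g}_{\alpha}$ for positive roots $\alpha$ whose $(\psi\setminus\theta)$-height equals $n$, i.e.\ whose expansion in simple roots has total coefficient $n$ on $\psi\setminus\theta$. For $n\geq 2$ and such a root $\alpha$, standard root-system combinatorics (induction on height, plus the fact that for two positive roots with $\alpha=\beta+\gamma$ one has $[\mathfrak{g}_{\beta},\mathfrak{g}_{\gamma}]=\mathfrak{g}_{\alpha}$) allow me to write $\alpha=\beta+\gamma$ with $\beta$ a simple root in $\psi\setminus\theta$ and $\gamma$ a positive root of $(\psi\setminus\theta)$-height $n-1$, giving $\mathfrak{g}_{\alpha}\subset[d_1(\theta),d_{n-1}(\theta)]$. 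Summing over $\alpha$ shows $d_n(\theta)=[d_1(\theta),d_{n-1}(\theta)]$, and the negative case is symmetric.

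The only non-formal step is the root-theoretic generation argument for $(\mathrm{i})'$; conditions (ii)' and the rest are immediate from invariance of $K_{\mathfrak{g}}$. Both verifications are classical facts about the parabolic grading attached to $(\theta,\psi)$, and with them in hand Theorem \ref{th;universality_stap} delivers the claimed isomorphism up to gradation.
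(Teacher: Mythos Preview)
Your approach matches the paper's: the paragraph preceding the proposition simply asserts that $\mathfrak{g}=\bigoplus_n d_n(\theta)$ together with $K_{\mathfrak{g}}$ satisfies the hypotheses of Theorem~\ref{th;universality_stap}, calling this ``easy to show.'' You are supplying the details the paper omits, and your verification of $(\mathrm{ii})'$ is correct.

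There is, however, a genuine slip in your argument for $(\mathrm{i})'$. It is \emph{not} always possible to subtract a simple root $\beta\in\psi\setminus\theta$ from a root of $(\psi\setminus\theta)$-height $n\ge 2$ and remain in the root system. In $G_2$ with short simple root $\alpha_1$, long simple root $\alpha_2$, and $\theta=\{\alpha_2\}$, the highest root $3\alpha_1+2\alpha_2$ has $(\psi\setminus\theta)$-height $3$, but $3\alpha_1+2\alpha_2-\alpha_1=2(\alpha_1+\alpha_2)$ is not a root. What is true (and suffices) is that $\mathfrak{g}_\alpha\subset [d_1(\theta),d_{n-1}(\theta)]$: here $3\alpha_1+2\alpha_2=(\alpha_1+\alpha_2)+(2\alpha_1+\alpha_2)$ with the summands of heights $1$ and $2$. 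A clean general argument is induction on the \emph{ordinary} height of $\alpha$: pick any simple root $\beta$ with $\alpha-\beta$ a root. If $\beta\in\psi\setminus\theta$ you are done. If $\beta\in\theta$, then $\alpha-\beta$ still has $(\psi\setminus\theta)$-height $n$, so by the inductive hypothesis $\mathfrak{g}_{\alpha-\beta}\subset[d_1(\theta),d_{n-1}(\theta)]$, and then $\mathfrak{g}_\alpha=[\mathfrak{g}_\beta,\mathfrak{g}_{\alpha-\beta}]\subset[d_0(\theta),[d_1(\theta),d_{n-1}(\theta)]]\subset[d_1(\theta),d_{n-1}(\theta)]$ by the Jacobi identity. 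With this correction your proof goes through.
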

In particular cases where $\theta =\emptyset$, we have the following proposition.
\begin{pr}[\text{cf. \cite [Example 3.6]{ru-4}}] \label {pr;parabolic_empty}
If we take $\theta =\emptyset $, then we have that 
$$
d_0(\theta )=d_0(\emptyset )=\mathfrak{h},\quad d_1(\theta )=d_1(\emptyset )=\bigoplus _{\alpha \in \psi }\mathbb{C}e_{\alpha },\quad d_{-1}(\theta )=d_{-1}(\emptyset )=\bigoplus _{\alpha \in \psi }\mathbb{C}e_{-\alpha }
$$
where $e_{\alpha }$ is a non-zero root vector of $\alpha $.
\end{pr}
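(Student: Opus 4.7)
The plan is to compute $d_n(\emptyset)$ directly from the definition of the grading element $H^\theta$ when $\theta = \emptyset$. When $\theta$ is empty, the condition $\alpha(H^\theta) = 0$ for $\alpha \in \theta$ is vacuous, and the remaining condition $\alpha(H^\emptyset) = 2$ for all $\alpha \in \psi \setminus \emptyset = \psi$ determines $H^\emptyset$ uniquely. So first I would record that $H^\emptyset$ is the unique element of $\mathfrak{h}$ for which every simple root takes value $2$.

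Next I would extend this by linearity to arbitrary roots. If $\beta \in R$ is written in the fundamental system as $\beta = \sum_{\alpha \in \psi} n_\alpha \alpha$, then
\begin{align*}
\beta(H^\emptyset) = \sum_{\alpha \in \psi} n_\alpha \, \alpha(H^\emptyset) = 2 \sum_{\alpha \in \psi} n_\alpha = 2\,\mathrm{ht}(\beta),
\end{align*}
where $\mathrm{ht}(\beta)$ denotes the height of $\beta$ with respect to $\psi$. Using the root space decomposition $\mathfrak{g} = \mathfrak{h} \oplus \bigoplus_{\beta \in R} \mathfrak{g}_\beta$, for any $H \in \mathfrak{h}$ we have $[H^\emptyset, H] = 0$, and for any $e_\beta \in \mathfrak{g}_\beta$ we have $[H^\emptyset, e_\beta] = \beta(H^\emptyset) e_\beta = 2\,\mathrm{ht}(\beta)\, e_\beta$. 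Hence $\mathfrak{h} \subset d_0(\emptyset)$ and $\mathfrak{g}_\beta \subset d_{\mathrm{ht}(\beta)}(\emptyset)$.

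Finally, since every root $\beta \in R$ has nonzero height and different height classes correspond to distinct eigenvalues of $\mathrm{ad}(H^\emptyset)$, I can simply read off the desired spaces by collecting root spaces according to height:
\begin{align*}
d_0(\emptyset) = \mathfrak{h}, \qquad d_1(\emptyset) = \bigoplus_{\mathrm{ht}(\beta) = 1} \mathfrak{g}_\beta, \qquad d_{-1}(\emptyset) = \bigoplus_{\mathrm{ht}(\beta) = -1} \mathfrak{g}_\beta.
\end{align*}
Since the only roots of height $\pm 1$ are exactly $\pm \alpha$ for $\alpha \in \psi$, and each $\mathfrak{g}_{\pm \alpha} = \mathbb{C} e_{\pm \alpha}$ is one-dimensional, the stated equalities follow. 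There is no real obstacle here; the only nontrivial observation is the identification of the eigenvalues of $\mathrm{ad}(H^\emptyset)$ with twice the height, which is immediate from linearity.
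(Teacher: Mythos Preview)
Your proof is correct. The paper's own proof is a one-line remark (``Our claim follows immediately from Proposition~\ref{pr;parabolic}''), which really just defers to the general setup of the grading element $H^\theta$ recalled in the paragraph preceding that proposition; your argument makes explicit the direct computation---identifying the $\mathrm{ad}(H^\emptyset)$-eigenvalue on $\mathfrak{g}_\beta$ with $2\,\mathrm{ht}(\beta)$---that justifies this remark.
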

\begin{proof}
Our claim follows immediately from Proposition \ref{pr;parabolic}.
\end{proof}
\begin{lemma}\label{lemma;parabolic}
We have an isomorphism of Lie algebras up to gradation:
$$
\mathfrak{g}\simeq L(\mathfrak{h},\ad, \bigoplus _{\alpha \in \psi }\mathbb{C}e_{\alpha },\bigoplus _{\alpha \in \psi }\mathbb{C}e_{-\alpha },K_{\mathfrak{g}}).
$$
\end{lemma}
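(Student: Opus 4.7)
The plan is simply to combine Propositions \ref{pr;parabolic} and \ref{pr;parabolic_empty}. First I would apply Proposition \ref{pr;parabolic} with the choice $\theta = \emptyset$ to obtain an isomorphism
\[
L(d_0(\emptyset ),\ad, d_1(\emptyset ), d_{-1}(\emptyset ), K_{\mathfrak{g}}) \simeq \bigoplus _{n\in \mathbb{Z}} d_n(\emptyset ) = \mathfrak{g}
\]
up to gradation. Next I would invoke Proposition \ref{pr;parabolic_empty} to rewrite the three arguments $d_0(\emptyset )$, $d_1(\emptyset )$, $d_{-1}(\emptyset )$ as $\mathfrak{h}$, $\bigoplus _{\alpha \in \psi } \mathbb{C} e_{\alpha }$, and $\bigoplus _{\alpha \in \psi } \mathbb{C} e_{-\alpha }$ respectively. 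Substituting these identifications into the previous display produces precisely the isomorphism asserted in the lemma.

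There is essentially no obstacle here: the lemma is a reformulation of Proposition \ref{pr;parabolic_empty} at the level of Lie algebras associated with standard pentads, and the heavy lifting (namely, verifying that the standard pentad $(d_0(\theta ),\ad, d_1(\theta ), d_{-1}(\theta ), K_{\mathfrak{g}})$ indeed reproduces $\mathfrak{g}$ via the universality property of Theorem \ref{th;universality_stap}) has already been performed in the proof of Proposition \ref{pr;parabolic}. The only sanity check worth mentioning is that the grading element $H^{\emptyset }$ satisfies $\alpha (H^{\emptyset })=2$ for every $\alpha \in \psi$, so that the root spaces of the fundamental roots and their negatives really are the degree $\pm 1$ pieces, and $\mathfrak{h}$ really is the degree $0$ piece; the remaining root spaces (for non-simple roots) then lie in degrees $|n| \geq 2$ and are recovered from the local part by the inductive bracket construction intrinsic to $L(\cdot)$.
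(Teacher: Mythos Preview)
Your proposal is correct and matches the paper's own proof exactly: the paper simply states that the claim follows immediately from Propositions \ref{pr;parabolic} and \ref{pr;parabolic_empty}, which is precisely the combination you describe.
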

\begin{proof}
Our claim follows immediately from Propositions \ref{pr;parabolic} and \ref{pr;parabolic_empty}.
\end{proof}
\begin{ex}\label {ex;1}
Let $\mathfrak{g}=\mathfrak{sl}_3$ and $\mathfrak{h}=\{\diag (c_1,c_2,c_3)\mid c_1,c_2,c_3\in \mathbb{C},\ c_1+c_2+c_3=0\}$.
The Killing form $K_{\mathfrak{g}}$ is given by $K_{\mathfrak{g}}(A,A^{\prime })=6\mathrm {Tr} (A\cdot A^{\prime })$ for $A,A^{\prime }\in \mathfrak{g}$.
Let $\psi =\{(\diag (c_1,c_2,c_3)\mapsto c_1-c_2),(\diag (c_1,c_2,c_3)\mapsto c_2-c_3)\}$ be a fundamental system of $R$.
Then the grading element corresponds to a subset $\theta =\emptyset $ of $\psi $ is given as
$$
H^{\emptyset }=\diag (2,0,-2)=\begin{pmatrix}2&0&0\\0&0&0\\0&0&-2\end{pmatrix}.
$$
By an easy calculation, we have 
\begin{align*}
&d_0(\emptyset )=\mathfrak{h},\quad d_1(\emptyset )=\Set { \begin{pmatrix}0&x&0\\0&0&y\\0&0&0\end{pmatrix} | x,y\in \mathbb{C}},\quad d_{-1}(\emptyset )=\Set{ \begin{pmatrix}0&0&0\\ \xi&0&0\\0&\eta &0\end{pmatrix} | \xi, \eta\in \mathbb{C}},&\\
&d_2(\emptyset )=\Set{ \begin{pmatrix}0&0&z\\0&0&0\\0&0&0\end{pmatrix}| z\in \mathbb{C}},\quad d_{-2}(\emptyset )=\Set { \begin{pmatrix}0&0&0\\ 0&0&0\\ \zeta&0 &0\end{pmatrix} | \zeta \in \mathbb{C}}, \quad d_n(\emptyset )=\{0\}&
\end{align*}
for any $|n|\geq 3$.
Then a pentad $(d_0(\emptyset ),\ad,d_1(\emptyset ),d_{-1}(\emptyset ),K_{\mathfrak{g}})$ is a standard pentad whose Lie algebra $L(d_0(\emptyset ),\ad,d_1(\emptyset ),d_{-1}(\emptyset ),K_{\mathfrak{g}})$ is isomorphic to $\mathfrak{g}$.
\end{ex}
Here, standard pentads are related to the theory of prehomogeneous vector spaces which are not necessarily of parabolic type.
If we let $(G,\pi ,V)$ be a finite-dimensional representation of a reductive algebraic group $G$, then we can embed its infinitesimal representation $(\Lie (G),d\pi ,V)$ into a Lie algebra $L(\Lie (G),d\pi ,V, \Hom (V,\C),B_0)$ ($B_0$ is a bilinear form on $\Lie (G)$).
We have obtained a result that a representation $(G,\pi,V)$ is a prehomogeneous vector space if and only if there exists an element $x_1\in V_1\subset \bigoplus _{n\in \Z}V_n=L(\Lie (G),d\pi ,V, \Hom (V,\C),B_0)$ such that $\ad x_1:V_{-1}\rightarrow V_0$ is injective (\cite [Theorems 2.1, 2.4]{Sa2}).
Thus, the theory of prehomogeneous vector spaces with reductive algebraic groups is reduced to the theory of graded Lie algebras.
It is an extension of the theory of prehomogeneous vector spaces of parabolic type.
\par
If $(G,\pi,V)$ is not a prehomogeneous vector space of parabolic type, the corresponding Lie algebra $L(\Lie (G),d\pi ,V, \Hom (V,\C),B_0)$ can not be a finite-dimensional semisimple Lie algebra.
Here, we have a natural question ``how can we describe the structure of Lie algebras associated to a standard pentad when it is not necessarily of parabolic type?''.
We will give an answer of this question in Theorem \ref  {theo;simple_dom_emb} under some assumptions.

\section{Pentads of Cartan type}
\subsection {Definition of pentads of Cartan type}
In this section, we shall study particular pentads which have a finite-dimensional commutative Lie algebra and its diagonalizable representation on a finite-dimensional vector space.
That is, we shall consider an analogue of the adjoint representation of a Cartan subalgebra of finite-dimensional Lie algebra in the theory of standard pentads.
First, let us consider how to describe such pentads (see Proposition \ref{pr;cartantype}).
For this, we shall give some definitions.
\begin{defn}\label {defn;hr}
Let $r$ be a natural number, $\mathfrak{h}^r$ a direct sum of $r$-copies of a $1$-dimensional ${\mathbb{C}}$-vector space ${\mathbb{C}}=\mathfrak{gl}_1$, i.e. 
$$
\mathfrak{h}^r=\mathfrak{gl}_1^r=\overbrace{{\mathbb{C}}\oplus \cdots \oplus {\mathbb{C}}}^{r}.
$$ 
We define a trivial bracket product on $\mathfrak{h}^r\times \mathfrak{h}^r$, i.e. we regard $\mathfrak{h}^r$ as an $r$-dimensional commutative Lie algebra with a bracket product $[a,a^{\prime }]=0$ for any $a,a^{\prime }\in \mathfrak{h}^r$.
Put $\epsilon _i:=(\delta _{i1},\ldots ,\delta _{ir})\in \mathfrak{h}^r$ for $i=1,\ldots ,r$, i.e. the $i$-th coordinate of $\epsilon _i$ is $1$ and the others are $0$.
\end{defn}
\begin{defn}\label {defn;hr4}
We retain to use the notations in Definition \ref{defn;hr}.
Let $n$ be a positive integer, $D=(d_{ij})_{1\leq i\leq r,1\leq j\leq n}\in \mathrm {M}(r,n;{\mathbb{C}})$ an arbitrary matrix of size $r\times n$ and $\Gamma =\diag (\gamma _1,\ldots ,\gamma _n)\in \mathrm {M}(n,n;{\mathbb{C}})$ an invertible diagonal matrix of size $n\times n$.
Put ${\mathbb{C}}_D^{\Gamma }:=\mathrm {M}(n,1;{\mathbb{C}})$, ${\mathbb{C}}_{-D}^{\Gamma }:=\mathrm {M}(n,1;{\mathbb{C}})$ and put $e_j:={}^t\begin{pmatrix}\delta _{j1}&\cdots &\delta _{jn}\end{pmatrix}\in {\mathbb{C}}_D^{\Gamma }$, $f_j:={}^t\begin{pmatrix}\delta _{j1}&\cdots &\delta _{jn}\end{pmatrix}\in {\mathbb{C}}_{-D}^{\Gamma }$ for $j=1,\ldots ,n$, i.e. the $j$-th coordinates of $e_j$ and $f_j$ are $1$ and the others are $0$.
We define representations $(\Box _D^{r },\mathbb{C}_D^{\Gamma })$, $(\Box _{-D}^{r },\mathbb{C}_{-D}^{\Gamma })$ of $\mathfrak{h}^r$ and a bilinear form $\langle \cdot,\cdot\rangle _D^{\Gamma }:{\mathbb{C}}_D^{\Gamma }\times {\mathbb{C}}_{-D}^{\Gamma }\rightarrow {\mathbb{C}}$ by:
\begin{align*}
\Box _D^r(\epsilon _i\otimes e_j):=d_{ij}e_j,\qquad \Box_{-D}^r(\epsilon _i\otimes f_j):=-d_{ij}f_j,\quad \langle e_i,f_j\rangle ^{\Gamma }_D=\delta _{ij}\gamma _i.
\end{align*}
\end{defn}
Here, note that the elements $\epsilon _1,\ldots ,\epsilon _r\in \mathfrak{h}^r$, $e_1,\ldots ,e_n\in {\mathbb{C}}_D^{\Gamma }$ and $f_1,\ldots ,f_n\in {\mathbb{C}}_{-D}^{\Gamma }$ are bases of the linear spaces $\mathfrak{h}^r$, ${\mathbb{C}}_D^{\Gamma }$ and ${\mathbb{C}}_{-D}^{\Gamma }$ respectively.
\begin{defn}\label{defn;hr3}
We retain to use the notations in Definitions \ref {defn;hr} and \ref{defn;hr4}.
Let $A\in \mathrm {M}(r,r;{\mathbb{C}})$ be an arbitrary invertible matrix of size $r\times r$.
We define a bilinear form $B_A$ on $\mathfrak{h}^r\times \mathfrak{h}^r$ by:
\begin{align*}
B_A((c_1,\ldots ,c_r),(c_1^{\prime },\ldots ,c_r^{\prime })):=\begin{pmatrix}c_1&\cdots &c_r\end{pmatrix}\cdot {}^tA^{-1}\cdot \begin{pmatrix}c_1^{\prime }\\ \vdots \\c_r^{\prime }\end{pmatrix}.
\end{align*}
\end{defn}
Note that the bilinear form $B_A$ is non-degenerate since the square matrix $A$ is invertible and that $B_A$ is invariant since the Lie algebra $\mathfrak{h}^r$ is commutative.
Moreover, if $A$ is a symmetric matrix, then $B_A$ is a symmetric bilinear form.
\par Under these preparations, we give the following definition.
\begin{defn}[Pentads of Cartan type]\label {defn;cartan}
We retain to use the notations in Definitions \ref {defn;hr}, \ref {defn;hr4} and \ref{defn;hr3}.
We call a pentad of the form $(\mathfrak{h}^r,\Box_D^{r },{\mathbb{C}}_D^{\Gamma },{\mathbb{C}}_{-D}^{\Gamma },B_A)$ a {\it pentad of Cartan type} and denote it by $P(r,n;A,D,\Gamma )$.
\end{defn}
It is well-known that two commutative and diagonalizable linear maps are simultaneously diagonalizable.
Thus, we can obtain the following proposition immediately.
\begin{pr}\label{pr;cartantype}
Let $(\mathfrak{g},\rho,V,{\cal V},B_0)$ be an arbitrary pentad satisfying the following three conditions: 
\begin{itemize}
\item [{\rm (i)}]{both $\mathfrak{g}$ and $V$ are finite-dimensional vector spaces},
\item [{\rm (ii)}]{the Lie algebra $\mathfrak{g}$ is commutative,}
\item [{\rm (iii)}]{the representation $\rho $ is diagonalizable}.
\end{itemize}
Then the pentad $(\mathfrak{g},\rho,V,{\cal V},B_0)$ is equivalent to some pentad of Cartan type.
\end{pr}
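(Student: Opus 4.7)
The plan is to simultaneously diagonalize the operators $\rho (a)$, $a\in \g$, on $V$, then to read off the data $A$, $D$, $\Gamma $ directly from a carefully chosen triple of bases of $\g$, $V$ and $\V$. I would first fix a basis $a_1,\ldots ,a_r$ of $\g$, where $r=\dim \g$. Since $\g$ is commutative the family $\{\rho (a_i)\}$ commutes, and since $\rho $ is diagonalizable each $\rho (a_i)$ is a diagonalizable endomorphism of $V$; by the simultaneous-diagonalization fact cited just before the proposition they are therefore jointly diagonalizable. Choosing a common eigenbasis $v_1,\ldots ,v_n$ of $V$, I define scalars $d_{ij}$ by $\rho (a_i\otimes v_j)=d_{ij}v_j$ and assemble the matrix $D=(d_{ij})\in \mathrm {M}(r,n;\C)$.

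Next I would treat $\V$. The dual representation $\varrho $ on $\Hom (V,\C)$ is diagonalized by the dual basis $\{v_j^{*}\}$ with $\varrho (a_i\otimes v_j^{*})=-d_{ij}v_j^{*}$, and $\V\subset \Hom (V,\C)$, being a $\g$-submodule stable under this diagonalizable family, is a direct sum of its joint weight spaces. The (implicitly assumed, cf.\ SP1) non-degeneracy of the canonical pairing $V\times \V\to \C$ forces $\V$ to meet each joint weight space of $\Hom (V,\C)$ in the whole weight space, whence $\dim \V =n$ and I can pick a basis $\phi _1,\ldots ,\phi _n$ of $\V$ with $\varrho (a_i\otimes \phi _j)=-d_{ij}\phi _j$ and $\langle v_i,\phi _j\rangle =\delta _{ij}\gamma _j$ for nonzero scalars $\gamma _j$; set $\Gamma :=\diag (\gamma _1,\ldots ,\gamma _n)$. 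For the bilinear form, let $M:=(B_0(a_i,a_j))_{ij}$, invertible by non-degeneracy of $B_0$, and set $A:=({}^tM)^{-1}$, an invertible $r\times r$ matrix.

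Finally I would take $\tau :\g\to \h^r$, $\sigma :V\to \C_D^{\Gamma }$ and $\varsigma :\V\to \C_{-D}^{\Gamma }$ to be the linear isomorphisms sending $a_i\mapsto \epsilon _i$, $v_j\mapsto e_j$, $\phi _j\mapsto f_j$, and verify the four relations of the definition of equivalent pentads with $c=1$: the intertwining of $\rho $ with $\Box _D^r$ and of $\varrho $ with $\Box _{-D}^r$ follows immediately from the definitions of $D$ and of the $\phi _j$; the pairing identity holds by the choice of the $\gamma _j$; and $B_0(a_i,a_j)=B_A(\epsilon _i,\epsilon _j)$ follows from $A=({}^tM)^{-1}$ via the formula $B_A(\epsilon _i,\epsilon _j)=({}^tA^{-1})_{ij}$. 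The one step needing genuine care is the dimension count $\dim \V =n$ together with the construction of the compatible basis $\phi _j$ in the second paragraph; everything else is merely translating the abstract pentad data into the matrix format of Definition \ref{defn;cartan}.
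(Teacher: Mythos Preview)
Your proposal is correct and follows exactly the approach the paper intends: the paper gives no detailed proof at all, merely remarking before the proposition that commuting diagonalizable operators are simultaneously diagonalizable and that the result is therefore immediate. Your write-up is a careful and accurate unpacking of precisely this idea, and your parenthetical observation that SP1 (non-degeneracy of $V\times \V\to \C$) must be implicitly assumed is well taken; without it the dimension count $\dim \V=n$ and the construction of the compatible basis $\phi_j$ would indeed fail.
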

Here, recall the definitions of matrices $D$ and $\Gamma $ of $P(r,n;A,D,\Gamma )$.
The column vectors of $D$ correspond to the eigenvectors of $\mathbb{C}_D^{\Gamma }$, and the entries of $\Gamma $ correspond to the inner product $\langle e_i,f_i\rangle _D^{\Gamma }$.
Thus, the equivalence relation of pentads of Cartan type is invariant even if we shuffle the order of the column vectors of $D$ or take any other invertible diagonal matrix $\Gamma ^{\prime }$.
\begin{pr}\label{pr;gamma_indep}
We retain to use the notations of Definition \ref {defn;cartan}.
Let $E_{\pi }=\begin{pmatrix}\delta _{i,\pi (i)}\end{pmatrix}$ be the permutation matrix for a permutation $\pi : \{1,\ldots ,n\}\rightarrow \{1,\ldots ,n\}$ and take another invertible diagonal matrix $\Gamma ^{\prime }\in \mathrm {M}(n,n;\mathbb{C})$.
Then we have an equivalence of standard pentads:
$$
P(r,n;A,D,\Gamma )\simeq P(r,n;A,D\cdot E_{\pi },\Gamma ^{\prime }).
$$
\end{pr}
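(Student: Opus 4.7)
The plan is to exhibit the equivalence by producing explicit linear isomorphisms $\tau, \sigma, \varsigma$ and a scalar $c$ satisfying the four equations in the definition of equivalence, using the fact that permuting columns of $D$ only permutes the eigenspaces of $\mathbb{C}_D^{\Gamma}$, and that the diagonal matrix $\Gamma$ only rescales the non-degenerate pairing on each eigenspace.

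Since both pentads have the same Lie algebra $\mathfrak{h}^r$ and the same bilinear form $B_A$, I would take $\tau = \mathrm{id}_{\mathfrak{h}^r}$ and $c = 1$; this immediately verifies the conditions $B_A(a,b) = c B_A(\tau(a),\tau(b))$ and reduces the problem to constructing module isomorphisms $\sigma : \mathbb{C}_D^{\Gamma} \to \mathbb{C}_{D \cdot E_{\pi}}^{\Gamma^{\prime}}$ and $\varsigma : \mathbb{C}_{-D}^{\Gamma} \to \mathbb{C}_{-D \cdot E_{\pi}}^{\Gamma^{\prime}}$ intertwining the actions and pairing correctly. The crucial observation is that the $j$-th column of $D \cdot E_\pi$ equals the $\pi(j)$-th column of $D$, so if we write $e_j^{\prime}, f_j^{\prime}$ for the standard basis vectors of $\mathbb{C}_{D \cdot E_\pi}^{\Gamma^{\prime}}$ and $\mathbb{C}_{-D \cdot E_\pi}^{\Gamma^{\prime}}$, then $e_j^{\prime}$ is a weight vector of the same weight as $e_{\pi(j)}$ under the $\mathfrak{h}^r$-action, and likewise for $f_j^{\prime}$ and $f_{\pi(j)}$.

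Accordingly I would define
\begin{align*}
\sigma(e_{\pi(j)}) := \lambda_j e_j^{\prime}, \qquad \varsigma(f_{\pi(j)}) := \mu_j f_j^{\prime}
\end{align*}
for nonzero scalars $\lambda_j, \mu_j \in \mathbb{C}^\times$ to be chosen. A direct eigenvalue computation shows that $\sigma$ and $\varsigma$ intertwine the representations $\Box_D^r$ and $\Box_{D \cdot E_\pi}^r$ (respectively $\Box_{-D}^r$ and $\Box_{-D \cdot E_\pi}^r$) for any choice of $\lambda_j$ and $\mu_j$, since the intertwining reduces to $d_{i,\pi(j)} = (D \cdot E_\pi)_{i,j}$, which is precisely how the columns of $D \cdot E_\pi$ are defined.

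The remaining condition $\langle v^1, \phi^1 \rangle^{\Gamma}_D = \langle \sigma(v^1), \varsigma(\phi^1) \rangle^{\Gamma^{\prime}}_{D \cdot E_\pi}$ becomes the pointwise constraint
\begin{align*}
\delta_{ij} \gamma_{\pi(i)} = \lambda_i \mu_j \delta_{ij} \gamma_i^{\prime},
\end{align*}
i.e.\ $\lambda_i \mu_i = \gamma_{\pi(i)}/\gamma_i^{\prime}$ for each $i$, which is solvable since $\Gamma$ and $\Gamma^{\prime}$ are invertible (for example $\mu_i = 1$, $\lambda_i = \gamma_{\pi(i)}/\gamma_i^{\prime}$). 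This completes the construction. There is no real obstacle here; the only subtlety is keeping the indexing through the permutation $\pi$ consistent, so the proof is essentially a bookkeeping exercise once the defining equivalences are unpacked.
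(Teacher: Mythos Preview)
Your proof is correct and follows exactly the approach the paper has in mind. The paper does not give an explicit proof of this proposition; it simply states it after observing that ``the column vectors of $D$ correspond to the eigenvectors of $\mathbb{C}_D^{\Gamma}$, and the entries of $\Gamma$ correspond to the inner product $\langle e_i,f_i\rangle_D^{\Gamma}$,'' so your explicit construction of $\tau=\mathrm{id}$, $c=1$, and the weight-space matching maps $\sigma,\varsigma$ is precisely the bookkeeping the paper leaves to the reader.
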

In particular, the structure of the graded Lie algebra corresponding to a pentad of Cartan type is independent to the choice of $\Gamma $.
However, a suitable $\Gamma $ is useful for us to describe some properties of $P(r,n;A,D,\Gamma )$ and ones of its corresponding Lie algebra.
\begin{pr}
A pentad of the form $(d_0(\emptyset ),\ad,d_1(\emptyset ),d_{-1}(\emptyset ),K_{\mathfrak{g}})$ (see Proposition \ref{pr;parabolic}) satisfies the conditions in Proposition \ref{pr;cartantype}.
Thus, such a pentad is equivalent to some pentad of Cartan type and can be written using some $r,n,A,D,\Gamma $.
\end{pr}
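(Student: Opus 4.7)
The plan is to verify the three hypotheses of Proposition \ref{pr;cartantype} for the pentad $(d_0(\emptyset),\ad,d_1(\emptyset),d_{-1}(\emptyset),K_{\mathfrak{g}})$; once that is done, the second assertion (equivalence with a pentad of Cartan type written by data $r,n,A,D,\Gamma$) is an immediate application of that proposition together with Definition \ref{defn;cartan}.

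First I would invoke Proposition \ref{pr;parabolic_empty} to recall the explicit form of the components: $d_0(\emptyset)=\mathfrak{h}$ and $d_1(\emptyset)=\bigoplus_{\alpha\in\psi}\mathbb{C}e_{\alpha}$, $d_{-1}(\emptyset)=\bigoplus_{\alpha\in\psi}\mathbb{C}e_{-\alpha}$. Since $\mathfrak{g}$ is a finite-dimensional semisimple Lie algebra, $\mathfrak{h}$ is finite-dimensional, and $\psi$ is a finite set, so $d_{\pm 1}(\emptyset)$ are also finite-dimensional; this gives condition (i). The Cartan subalgebra $\mathfrak{h}$ is commutative by definition, giving condition (ii). For condition (iii), each root vector $e_{\alpha}$ satisfies $\ad(H)(e_{\alpha})=[H,e_{\alpha}]=\alpha(H)e_{\alpha}$ for all $H\in\mathfrak{h}$, so $\{e_{\alpha}\}_{\alpha\in\psi}$ is a basis of $d_1(\emptyset)$ consisting of simultaneous eigenvectors of $\ad(\mathfrak{h})$; hence the representation is diagonalizable.

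To make the assignment $(r,n,A,D,\Gamma)$ concrete, I would set $r=n=\dim\mathfrak{h}=|\psi|$, fix a basis $H_1,\ldots,H_r$ of $\mathfrak{h}$, enumerate $\psi=\{\alpha_1,\ldots,\alpha_n\}$, and let $D=(\alpha_j(H_i))_{i,j}\in\mathrm{M}(r,n;\mathbb{C})$ so that the eigenvalues of $\epsilon_i$ on $e_{\alpha_j}$ (after identifying $\mathfrak{h}$ with $\mathfrak{h}^r$ via $H_i\leftrightarrow\epsilon_i$) match the root values. For $A$, one takes the Gram matrix of $K_{\mathfrak{g}}\mid_{\mathfrak{h}\times\mathfrak{h}}$ in the basis $H_1,\ldots,H_r$ and inverts the transpose, as dictated by Definition \ref{defn;hr3}; this matrix is invertible because $K_{\mathfrak{g}}\mid_{\mathfrak{h}\times\mathfrak{h}}$ is non-degenerate. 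For $\Gamma$, I would take $\Gamma=\diag(K_{\mathfrak{g}}(e_{\alpha_1},e_{-\alpha_1}),\ldots,K_{\mathfrak{g}}(e_{\alpha_n},e_{-\alpha_n}))$, which is invertible since $d_1(\emptyset)$ and $d_{-1}(\emptyset)$ are dual under $K_{\mathfrak{g}}$ with $K_{\mathfrak{g}}(e_\alpha,e_{-\beta})=0$ unless $\alpha=\beta$.

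There is no serious obstacle: the argument is a direct translation of the root space decomposition into the language of Definition \ref{defn;cartan}. The only point that requires minor attention is to keep track of the conventions (dual bases, sign of the representation on $d_{-1}(\emptyset)$, inverse transpose in the definition of $B_A$) so that the explicit equivalence of pentads matches the one produced by the proof of Proposition \ref{pr;cartantype}; this amounts to choosing the permutation of $\psi$ and the diagonal $\Gamma$ which, by Proposition \ref{pr;gamma_indep}, does not affect the equivalence class.
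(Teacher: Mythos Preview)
Your proposal is correct and follows essentially the same approach as the paper's own proof, which is even terser: the paper simply notes that $d_0(\emptyset)=\mathfrak{h}$ and then invokes ``some properties of Cartan subalgebras'' to check the three conditions of Proposition~\ref{pr;cartantype}. Your explicit construction of the data $(r,n,A,D,\Gamma)$ goes beyond what the paper does at this point (the paper defers such an explicit description to Lemma~\ref{lem;4}), but it is correct and helpful.
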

\begin{proof}
Under the notation of Proposition \ref{pr;parabolic}, we have $d_0(\emptyset )=\mathfrak{h}$.
Thus, from some properties of Cartan subalgebras, we can easily check that $(d_0(\emptyset ),\ad,d_1(\emptyset ),d_{-1}(\emptyset ),K_{\mathfrak{g}})$ satisfies the conditions in Proposition \ref{pr;cartantype}.
\end{proof}
\begin{ex}\label {ex;2}
We retain to use the notations in Example \ref {ex;1}.
Here, we shall give two pentads of Cartan type equivalent to the pentad $(d_0(\emptyset ),\ad,d_1(\emptyset ),d_{-1}(\emptyset ),K_{\mathfrak{g}})$ defined in Example \ref{ex;1} as follows.
Put
\begin{align*}
\varepsilon _1=\begin{pmatrix}1&&\\&-1&\\&&0\end{pmatrix},\quad 
\varepsilon _2=\begin{pmatrix}0&&\\&1&\\&&-1\end{pmatrix},\quad
\epsilon _1=\begin{pmatrix}2&&\\&-1&\\&&-1\end{pmatrix},\quad 
\epsilon _2=\begin{pmatrix}1&&\\&-3&\\&&2\end{pmatrix}\in d_0(\emptyset ),
\end{align*}
and 
\begin{align*}
&X_1=\begin{pmatrix}0&1&0\\0&0&0\\0&0&0\end{pmatrix}, \quad
X_2=\begin{pmatrix}0&0&0\\0&0&1\\0&0&0\end{pmatrix}\in d_1(\emptyset ),\\
&\Xi _1=\begin{pmatrix}0&0&0\\1&0&0\\0&0&0\end{pmatrix},
\quad \Xi _2=\begin{pmatrix}0&0&0\\0&0&0\\0&1&0\end{pmatrix}\in d_{-1}(\emptyset ).
\end{align*}
Then both $\{\varepsilon _1,\varepsilon _2\}$ and $\{\epsilon _1,\epsilon _2\}$ are bases of the $\mathbb{C}$-vector space $d_0(\emptyset )=\mathfrak{h}$, $\{X_1,X_2\}$ is a basis of $d_1(\emptyset )$, $\{\Xi_1,\Xi_2\}$ is a basis of $d_{-1}(\emptyset )$.
We have the following equations among the above matrices:
\begin{align*}
&[\varepsilon _1,X_1]=2X_1, \quad [\varepsilon _2,X_1]=-X_1,\quad [\varepsilon _1,X_2]=-X_2,\quad [\varepsilon _2,X_2]=2X_2, \\
&\frac{1}{6}K_{\mathfrak{g}}(\varepsilon _1,\varepsilon _1)=2, \quad \frac{1}{6}K_{\mathfrak{g}}(\varepsilon _1,\varepsilon _2)=-1,\quad \frac{1}{6}K_{\mathfrak{g}}(\varepsilon _2,\varepsilon _2)=2,\\
&[\epsilon _1,X_1]=3X_1, \quad [\epsilon _1,X_2]=0,\quad [\epsilon _2,X_1]=4X_1,\quad [\epsilon _2,X_2]=-5X_2, \\
&\frac{1}{6}K_{\mathfrak{g}}(\epsilon _1,\epsilon _1)=6, \quad \frac{1}{6}K_{\mathfrak{g}}(\epsilon _1,\epsilon _2)=3,\quad \frac{1}{6}K_{\mathfrak{g}}(\epsilon _2,\epsilon _2)=14,\\
&\frac{1}{6}K_{\mathfrak{g}}(X_i,\Xi _j)=\delta _{ij}\quad \text{for $i,j=1,2$.}
\end{align*}
Thus, we have two pentads of Cartan type $P(2,2;A,D,\Gamma )$ and $P(2,2;A^{\prime },D^{\prime },\Gamma ^{\prime })$ which are equivalent to the standard pentad $(d_0(\emptyset ),\ad,d_1(\emptyset ),d_{-1}(\emptyset ),K_{\mathfrak{g}})\simeq (d_0(\emptyset ),\ad,d_1(\emptyset ),d_{-1}(\emptyset ),K_{\mathfrak{g}}/6)$, where 
\begin{align*}
&{}^tA^{-1}=\begin{pmatrix}2&-1\\-1&2\end{pmatrix}\quad \left ( A=\frac{1}{3}\begin{pmatrix}2&1\\1&2\end{pmatrix}\right ),\quad D=\begin{pmatrix}2&-1\\-1&2\end{pmatrix},\quad \Gamma =\begin{pmatrix}1&0\\0&1\end{pmatrix},\\
&{}^t(A^{\prime })^{-1}=\begin{pmatrix}6&3\\3&14\end{pmatrix}\quad \left (A^{\prime }=\frac{1}{75}\begin{pmatrix}14&-3\\-3&6\end{pmatrix}\right ),\quad D^{\prime }=\begin{pmatrix}3&0\\4&-5\end{pmatrix},\quad \Gamma ^{\prime }=\begin{pmatrix}1&0\\0&1\end{pmatrix}.
\end{align*}
\end{ex}
\begin{remark}
As we have seen in Example \ref{ex;2}, even if two pentads of Cartan type $P(r,n;A,D,\Gamma )$ and $P(r,n;A^{\prime },D^{\prime },\Gamma ^{\prime })$ are equivalent, they do not always satisfy $(A,D,\Gamma )=(A^{\prime },D^{\prime },\Gamma ^{\prime })$.
\end{remark}

\subsection {Some properties of pentads of Cartan type}
Some fundamental properties of pentads of Cartan type can be written by data $r$, $n$, $A$, $D$ and $\Gamma $.
The first claim is immediate but important.
\begin{pr}\label{pr;cartan_stap}
A pentad of Cartan type is standard.
\end{pr}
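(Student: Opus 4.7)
The plan is to verify the two defining conditions SP1 and SP2 of a standard pentad directly from the explicit definitions of $\Box_D^r$, $\Box_{-D}^r$, $B_A$ and $\langle\cdot,\cdot\rangle_D^\Gamma$ given in Definitions \ref{defn;hr4} and \ref{defn;hr3}. Because everything is finite-dimensional and set up in terms of the chosen bases $\epsilon_i$, $e_j$, $f_j$, this reduces to a small bit of linear algebra; no subtle infinite-dimensional issue arises.

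First I would check SP1. The pairing $\langle e_i,f_j\rangle_D^\Gamma=\delta_{ij}\gamma_i$ has matrix equal to $\Gamma=\diag(\gamma_1,\ldots,\gamma_n)$, and since $\Gamma$ is invertible by hypothesis, this pairing is non-degenerate. In particular it identifies $\mathbb{C}_{-D}^\Gamma$ with $\mathrm{Hom}(\mathbb{C}_D^\Gamma,\mathbb{C})$ as vector spaces, and I would then verify that this identification respects the $\mathfrak{h}^r$-action: for the basis elements this is the computation
\begin{align*}
\langle \Box_D^r(\epsilon_i\otimes e_j),f_k\rangle_D^\Gamma+\langle e_j,\Box_{-D}^r(\epsilon_i\otimes f_k)\rangle_D^\Gamma
=d_{ij}\delta_{jk}\gamma_j-d_{ik}\delta_{jk}\gamma_j=0,
\end{align*}
so $\mathbb{C}_{-D}^\Gamma$ is indeed realized as an $\mathfrak{h}^r$-submodule of $\mathrm{Hom}(\mathbb{C}_D^\Gamma,\mathbb{C})$ via $\langle\cdot,\cdot\rangle_D^\Gamma$.

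For SP2, I would construct $\Phi_{\Box_D^r}:\mathbb{C}_D^\Gamma\otimes\mathbb{C}_{-D}^\Gamma\to\mathfrak{h}^r$ on the basis $\{e_j\otimes f_k\}$. For $j\neq k$ the required identity forces $\Phi_{\Box_D^r}(e_j\otimes f_k)=0$, since $\langle\Box_D^r(\epsilon_i\otimes e_j),f_k\rangle_D^\Gamma=d_{ij}\delta_{jk}\gamma_j=0$ and $B_A$ is non-degenerate. For $j=k$ I would look for $\Phi_{\Box_D^r}(e_j\otimes f_j)=\sum_l c_l^{(j)}\epsilon_l$ satisfying
\begin{align*}
\sum_l c_l^{(j)}(A^{-1})_{li}=d_{ij}\gamma_j\qquad\text{for all }i=1,\ldots,r,
\end{align*}
which is an inhomogeneous linear system whose coefficient matrix is the invertible $A^{-1}$; hence a unique solution vector $c^{(j)}=\gamma_j\,{}^t\!A\cdot{}^t(d_{1j},\ldots,d_{rj})$ exists. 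This produces the required $\Phi$-map on the generating tensors, and extending $\mathbb{C}$-bilinearly then yields SP2.

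The main (and essentially only) obstacle is bookkeeping with the transpose/inverse in the definition of $B_A$ so that the linear system above really has $A^{-1}$ as coefficient matrix; once this is sorted out the invertibility of $A$ gives the $\Phi$-map immediately. Alternatively, since SP1 and the invariance computation identify $\mathbb{C}_{-D}^\Gamma$ with all of $\mathrm{Hom}(\mathbb{C}_D^\Gamma,\mathbb{C})$ as $\mathfrak{h}^r$-modules, one may simply cite the general fact recalled after Definition \ref{defn;stap} (from \cite[Lemma 2.3]{Sa3}) that any pentad with $\mathfrak{g}$, $V$ finite-dimensional and $\mathcal{V}=\mathrm{Hom}(V,\mathbb{C})$ is automatically standard; this gives the quickest proof.
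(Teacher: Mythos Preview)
Your proof is correct. The paper's own proof is precisely your ``alternative'' at the end: it observes that $\Gamma$ invertible makes $\langle\cdot,\cdot\rangle_D^\Gamma$ non-degenerate, identifies $\mathbb{C}_{-D}^\Gamma$ with $\mathrm{Hom}(\mathbb{C}_D^\Gamma,\mathbb{C})$, and then invokes the finite-dimensional lemma \cite[Lemma 2.3]{Sa3}. Your main route---the explicit construction of the $\Phi$-map by solving the linear system with coefficient matrix ${}^t(A^{-1})$---is not redundant work: it is essentially the content of the paper's \emph{next} proposition (Proposition~\ref{pr;phi_cartan}), which computes $\Phi(r,n;A,D,\Gamma)(e_i\otimes f_j)$ and obtains the same formula $\delta_{ij}\,{}^tA\cdot D\cdot\Gamma\cdot{}^t(\delta_{1i},\ldots,\delta_{ni})$ that you derived. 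So your direct argument merges the proofs of Propositions~\ref{pr;cartan_stap} and~\ref{pr;phi_cartan} into one, while the paper separates the existence (via the general lemma) from the explicit formula.
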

\begin{proof}
From the assumption that $\Gamma =\diag (\gamma _1,\ldots ,\gamma _n)\in \mathrm {M}(n,n;{\mathbb{C}})$ is invertible, we have that all $\gamma _i$'s are not $0$ and that the pairing $\langle \cdot,\cdot\rangle _D^{\Gamma }:{\mathbb{C}}^{\Gamma}_D\times {\mathbb{C}}^{\Gamma}_{-D}\rightarrow \mathbb{C}$ defined in Definition \ref{defn;hr4} is non-degenerate.
It means that the $\mathfrak{h}^r$-module ${\mathbb{C}}^{\Gamma}_{-D}$ is regarded as $\mathrm{Hom }({\mathbb{C}}^{\Gamma}_D,{\mathbb{C}})$ via $\langle\cdot,\cdot\rangle^{\Gamma }_D$.
Since $\mathfrak{h}^r$ and ${\mathbb{C}}_D^{\Gamma }$ are finite-dimensional, we have that a pentad of Cartan type $P(r,n;A,D,\Gamma )=(\mathfrak{h}^r,\Box^r_D, {\mathbb{C}}^{\Gamma}_D,{\mathbb{C}}^{\Gamma}_{-D},B_A)\simeq (\mathfrak{h}^r,\Box^r_D, {\mathbb{C}}^{\Gamma}_D,\mathrm {Hom }(\mathbb{C}_{-D}^{\Gamma },\mathbb{C}),B_A)$ is standard.
\end{proof}
From Proposition \ref{pr;cartan_stap} it follows that any pentad of Cartan type has a $\Phi $-map.
The $\Phi $-map of $P(r,n;A,D,\Gamma )$ can be written by data $r$, $n$, $A$, $D$ and $\Gamma $ as the following proposition.
\begin{pr}\label {pr;phi_cartan}
An arbitrary pentad of Cartan type $P(r,n;A,D,\Gamma )=(\mathfrak{h}^r,\Box^r_D, {\mathbb{C}}^{\Gamma}_D,{\mathbb{C}}^{\Gamma}_{-D},B_A)$ is a standard pentad whose $\Phi $-map, denoted by $\Phi (r,n;A,D,\Gamma )$, is given by
\begin{align}
\Phi (r,n;A,D,\Gamma )(e_i\otimes f_j)=\delta _{ij}\gamma _i (a_{11}d_{1i}+\cdots +a_{r1}d_{ri},\ldots ,a_{1r}d_{1i}+\cdots +a_{rr}d_{ri})\label {eq;phi_cartan}
\end{align}
for any $i,j=1,\ldots ,n$.
\end{pr}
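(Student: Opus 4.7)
The plan is a direct verification. By Proposition \ref{pr;cartan_stap}, the pentad $P(r,n;A,D,\Gamma)$ is standard, so its $\Phi$-map exists; and since $B_A$ is non-degenerate (because $A$ is invertible), the remark after Definition \ref{defn;phimap} shows that the $\Phi$-map is uniquely determined by the defining identity (\ref{defn;eq_stap_Phimap}). Hence it suffices to exhibit a candidate linear map $V\otimes \V\to \h^r$ and verify (\ref{defn;eq_stap_Phimap}) on the bases $\{\epsilon_k\}$, $\{e_i\}$, $\{f_j\}$ introduced in Definitions \ref{defn;hr} and \ref{defn;hr4}.

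First, I would define $\Phi(r,n;A,D,\Gamma):\C_D^{\Gamma}\otimes \C_{-D}^{\Gamma}\to \h^r$ on the basis by the right hand side of (\ref{eq;phi_cartan}) and extend linearly. The key observation is that the $\ell$-th coordinate of $\Phi(e_i\otimes f_j)$ equals $\delta_{ij}\gamma_i\,({}^tA\cdot D)_{\ell i}$, i.e., $\Phi(e_i\otimes f_j)$ is, as a row vector, $\delta_{ij}\gamma_i$ times the transpose of the $i$-th column of ${}^tA\cdot D$.

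Next I would compute both sides of (\ref{defn;eq_stap_Phimap}) for $a=\epsilon_k$, $v=e_i$, $\phi=f_j$. The right hand side is immediate from the definition of $\Box_D^r$ and $\langle\cdot,\cdot\rangle_D^{\Gamma}$ in Definition \ref{defn;hr4}:
\begin{align*}
\langle \Box_D^r(\epsilon_k\otimes e_i),f_j\rangle_D^{\Gamma}=d_{ki}\langle e_i,f_j\rangle_D^{\Gamma}=\delta_{ij}\gamma_i d_{ki}.
\end{align*}
For the left hand side, using the matrix expression of $B_A$ from Definition \ref{defn;hr3} and the factorization above,
\begin{align*}
B_A(\epsilon_k,\Phi(e_i\otimes f_j))
=\delta_{ij}\gamma_i\,\epsilon_k\cdot {}^tA^{-1}\cdot({}^tA\cdot D)_{\cdot i}
=\delta_{ij}\gamma_i\,\epsilon_k\cdot D_{\cdot i}
=\delta_{ij}\gamma_i d_{ki},
\end{align*}
where $({}^tA\cdot D)_{\cdot i}$ and $D_{\cdot i}$ denote the $i$-th columns. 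The two quantities agree, so the defining identity holds on basis elements and therefore on all of $\C_D^{\Gamma}\otimes\C_{-D}^{\Gamma}$ by linearity.

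There is essentially no obstacle beyond matrix bookkeeping; the cancellation ${}^tA^{-1}\cdot{}^tA=I_r$ is what makes the formula (\ref{eq;phi_cartan}) the correct one, and it is precisely this cancellation that forces the awkward-looking presence of both $A$ in the definition of $B_A$ and ${}^tA$-entries in the formula for $\Phi$. By uniqueness of the $\Phi$-map, the candidate so verified is the $\Phi$-map of $P(r,n;A,D,\Gamma)$, proving the proposition.
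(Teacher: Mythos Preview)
Your proof is correct and follows essentially the same approach as the paper: both verify the defining identity (\ref{defn;eq_stap_Phimap}) on basis elements $\epsilon_k, e_i, f_j$ by computing each side and using the cancellation ${}^tA^{-1}\cdot {}^tA=I_r$. The paper additionally records the column-vector form $\delta_{ij}\,{}^tA\cdot D\cdot \Gamma\cdot {}^t(\delta_{1i},\ldots,\delta_{ni})$ for later use, but the argument is otherwise the same.
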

\begin{proof}
Here, note that the right hand side of the equation (\ref{eq;phi_cartan}) can be identified with a vector
\begin{align}
\delta _{ij}{}^tA\cdot \begin{pmatrix}\gamma _id _{1i}\\ \vdots \\ \gamma _id _{ri}\end{pmatrix}=\delta _{ij}{}^tA\cdot D\cdot \Gamma \cdot \begin{pmatrix}\delta _{1i}\\ \vdots \\ \delta _{ni}\end{pmatrix},  \label{eq;phi1}
\end{align}
via the identification between $(c_1,\ldots ,c_r)\in \mathfrak{h}^r$ and ${}^t\begin{pmatrix}c_1&\cdots &c_r\end{pmatrix}\in \mathrm {M}(r,1;{\mathbb{C}})$.
Under this identification, we can show our claim by a direct calculation.
In fact, for any $1\leq i,j,k\leq r$, we have:
\begin{align*}
&B_A(\epsilon _k,\delta _{ij}\gamma _i (a_{11}d_{1i}+\cdots +a_{r1}d_{ri},\ldots ,a_{1r}d_{1i}+\cdots +a_{rr}d_{ri})) \notag\\
&\quad =\begin{pmatrix}\delta _{k1}&\cdots &\delta _{kr}\end{pmatrix}\cdot {}^tA^{-1}\cdot \delta _{ij}{}^tA\cdot \begin{pmatrix}\gamma _id _{1i}\\ \vdots \\ \gamma _id _{ri}\end{pmatrix}
=\delta _{ij}\gamma _id_{ki} 
=\langle d_{ki}e_i,f_j\rangle _D^{\Gamma }
=\langle \Box_D^r (\epsilon _k\otimes e_i),f_j\rangle _D^{\Gamma }.
\end{align*}
By the definition of $\Phi $-maps, we have the equation (\ref{eq;phi_cartan}).
\end{proof}
In this paper, the elements $\Phi (r,n;A,D,\Gamma )(e_i\otimes f_i)= \gamma _i(a_{11}d_{1i}+\cdots +a_{r1}d_{ri},\ldots ,a_{1r}d_{1i}+\cdots +a_{rr}d_{ri})$ $(i=1,\ldots ,n)$ play important roles.
\begin{defn}
We put
\begin{align*}
&h_i:=\Phi (r,n;A,D,\Gamma )(e_i\otimes f_i)=\gamma _i (a_{11}d_{1i}+\cdots +a_{r1}d_{ri},\ldots ,a_{1r}d_{1i}+\cdots +a_{rr}d_{ri})\\
&\quad =\gamma _i ((a_{11}d_{1i}+\cdots +a_{1r}d_{ri})\epsilon _1+\cdots +(a_{r1}d_{1i}+\cdots +a_{rr}d_{ri})\epsilon _r)
\end{align*}
for $1\leq i\leq n$.
\end{defn}
In general, a set $\{h_i\}_{1\leq i\leq n}$ is not always linearly independent and does not always generate the vector space $\mathfrak{h}^r$.
Indeed, for example, if $r=\dim \mathfrak{h}^r>\dim {\mathbb{C}}_D^{\Gamma }=n$, then it is obvious that $\{h_i\}_{1\leq i\leq n}$ can not generate $\mathfrak{h}^r$.
\begin{pr}\label{pr;cartanmat}
Let $P(r,n;A,D,\Gamma )$ be a pentad of Cartan type.
Put $C(A,D,\Gamma ):=\Gamma \cdot {}^tD\cdot A\cdot D\in \mathrm {M}(n,n;\mathbb{C})$ and denote it by $C(A,D,\Gamma )=(C_{ij})_{1\leq i\leq n,1\leq j\leq n}$.
Then we have the following equations:
\begin{align}
&\Box _D^r(h_i\otimes e_j)=\gamma _i\cdot \left (\begin{pmatrix}d_{1i}&\cdots &d_{ri}\end{pmatrix}\cdot A\cdot \begin{pmatrix}d_{1j}\\ \vdots \\ d_{rj}\end{pmatrix}\right )\cdot e_j=C_{ij}e_j,\label{eq;cartanmat1}\\
&\Box _{-D}^r(h_i\otimes f_j)=-C_{ij}f_j,\label{eq;cartanmat2}\\
&B_A(h_i,h_j)=\gamma _i\cdot \gamma _j\cdot \begin{pmatrix}d_{1i}&\cdots &d_{ri}\end{pmatrix}\cdot A\cdot \begin{pmatrix}d_{1j}\\ \vdots \\ d_{rj}\end{pmatrix}=\gamma _jC_{ij}\label{eq;cartanmat3}
\end{align}
for any $1\leq i,j\leq n$.
\end{pr}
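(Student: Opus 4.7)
The plan is to reduce all three equations to direct matrix computations by first rewriting $h_i$ in a compact form. Under the identification of $(c_1,\ldots,c_r)\in\mathfrak{h}^r$ with the column vector ${}^t(c_1,\ldots,c_r)\in\mathrm{M}(r,1;\mathbb{C})$ used in the proof of Proposition \ref{pr;phi_cartan}, the definition of $h_i$ rewrites as
\begin{align*}
h_i=\gamma_i\,{}^tA\cdot D\cdot{}^t\begin{pmatrix}\delta_{1i}&\cdots&\delta_{ni}\end{pmatrix},
\end{align*}
so the $k$-th coordinate of $h_i$ equals $\gamma_i\sum_{l=1}^r a_{lk}d_{li}$. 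This is really just Proposition \ref{pr;phi_cartan} with $j=i$.

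For equation (\ref{eq;cartanmat1}), I would expand $h_i=\sum_k (h_i)_k\,\epsilon_k$ and use linearity of $\Box_D^r$ together with the defining relation $\Box_D^r(\epsilon_k\otimes e_j)=d_{kj}e_j$ from Definition \ref{defn;hr4}. This gives $\Box_D^r(h_i\otimes e_j)=\gamma_i\bigl(\sum_{k,l}d_{li}a_{lk}d_{kj}\bigr)e_j$, and the parenthesized double sum is exactly $({}^tD\cdot A\cdot D)_{ij}$, hence $C_{ij}e_j$ after multiplying by $\gamma_i$ from $\Gamma$. Equation (\ref{eq;cartanmat2}) follows by the same calculation with the single sign change coming from $\Box_{-D}^r(\epsilon_k\otimes f_j)=-d_{kj}f_j$.

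For equation (\ref{eq;cartanmat3}), the one place where a little care is needed, I would plug the column-vector form of $h_i$ and $h_j$ into the definition of $B_A$ from Definition \ref{defn;hr3}:
\begin{align*}
B_A(h_i,h_j)=\gamma_i\gamma_j\,{}^t\!\bigl({}^tA\cdot D\cdot e_i^{(n)}\bigr)\cdot{}^tA^{-1}\cdot\bigl({}^tA\cdot D\cdot e_j^{(n)}\bigr),
\end{align*}
where $e_i^{(n)},e_j^{(n)}$ are standard column basis vectors of length $n$. Transposing the first factor produces $\gamma_i\gamma_j\,{}^t(e_i^{(n)})\cdot{}^tD\cdot A\cdot{}^tA^{-1}\cdot{}^tA\cdot D\cdot e_j^{(n)}$, and the middle product collapses using ${}^tA^{-1}\cdot{}^tA=I_r$, leaving $\gamma_i\gamma_j({}^tD\cdot A\cdot D)_{ij}=\gamma_j C_{ij}$. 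This is the only non-routine step: one must verify that the asymmetric bilinear form $B_A$, defined via ${}^tA^{-1}$, is compatible with the fact that $h_i$ was obtained from $\Phi_\rho$ using a factor of ${}^tA$, so that $A$ and ${}^tA^{-1}$ cancel correctly.

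I expect no genuine obstacle here; the proposition is purely computational, and the entire argument is a matter of bookkeeping with the identifications and with the position of transposes. The one place to be vigilant is the cancellation ${}^tA\cdot{}^tA^{-1}\cdot{}^tA={}^tA$ in the proof of (\ref{eq;cartanmat3}), which relies on the invertibility of $A$ assumed in Definition \ref{defn;hr3}.
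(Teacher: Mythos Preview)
Your proposal is correct. For (\ref{eq;cartanmat1}) your argument is essentially identical to the paper's. For (\ref{eq;cartanmat2}) and (\ref{eq;cartanmat3}) you take a slightly different route: you compute both directly from the matrix form of $h_i$, whereas the paper deduces (\ref{eq;cartanmat2}) from (\ref{eq;cartanmat1}) via the non-degenerate pairing $\langle\cdot,\cdot\rangle_D^\Gamma$, and deduces (\ref{eq;cartanmat3}) from (\ref{eq;cartanmat1}) via the defining property of the $\Phi$-map, namely $B_A(h_i,h_j)=B_A(h_i,\Phi(e_j\otimes f_j))=\langle\Box_D^r(h_i\otimes e_j),f_j\rangle_D^\Gamma=\gamma_jC_{ij}$. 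The paper's approach to (\ref{eq;cartanmat3}) is a bit slicker since it avoids the transpose bookkeeping and the cancellation $A\cdot{}^tA^{-1}\cdot{}^tA=A$ entirely; your approach is more self-contained and makes explicit that the formula really is just a matrix identity. Either is fine here.
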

\begin{proof}
We can show our claims by direct calculations.
\par Let us show (\ref{eq;cartanmat1}).
For any $1\leq i,j\leq n$, we have
\begin{align*}
&\Box_{D}^r(h_i\otimes e_j)=\Box_{D}^r( \gamma _i ((a_{11}d_{1i}+\cdots +a_{r1}d_{ri})\epsilon _1+\cdots +(a_{1r}d_{1i}+\cdots +a_{rr}d_{ri})\epsilon _r)\otimes e_j)\notag\\
&\quad =\gamma _i(d_{1j}(a_{11}d_{1i}+\cdots +a_{r1}d_{ri})+\cdots +d_{rj}(a_{1r}d_{1i}+\cdots +a_{rr}d_{ri}))e_j\notag\\
&\quad =\gamma _i\cdot \left (\begin{pmatrix}d_{1j}&\cdots &d_{rj}\end{pmatrix}\cdot {}^tA\cdot \begin{pmatrix}d_{1i}\\ \vdots \\ d_{ri}\end{pmatrix} \right )\cdot e_j
 =\gamma _i\cdot \left (\begin{pmatrix}d_{1i}&\cdots &d_{ri}\end{pmatrix}\cdot A\cdot \begin{pmatrix}d_{1j}\\ \vdots \\ d_{rj}\end{pmatrix} \right )\cdot e_j=C_{ij}e_j.
\end{align*}
Thus, we we have the equation (\ref{eq;cartanmat1}).
\par Let us show (\ref{eq;cartanmat2}).
From (\ref{eq;cartanmat1}), we have the following equation 
\begin{align*}
&\langle e_k,\Box_{-D}^r(h_i\otimes f_j)\rangle _D^{\Gamma }=-\langle \Box_D^{r }(h_i\otimes e_k),f_j\rangle _D^{\Gamma }=-\langle C_{ik}e_k,f_j\rangle _D^{\Gamma }=-\delta _{kj}\gamma _kC_{ik}\\
&\quad =-\delta _{kj}\gamma _jC_{ij}=-C_{ij}\langle e_k,f_j\rangle _D^{\Gamma }=\langle e_k,-C_{ij}f_j\rangle _D^{\Gamma }.
\end{align*}
for any $1\leq i,j,k\leq n$.
Since the pairing $\langle \cdot,\cdot\rangle_D^{\Gamma }$ is non-degenerate, we have the equation (\ref{eq;cartanmat2}).
\par Let us show (\ref{eq;cartanmat3}).
From (\ref{eq;cartanmat1}), we have 
\begin{align*}
B_A(h_i,h_j)=B_A(h_i,\Phi (r,n;A,D,\Gamma )(e_j\otimes f_j))=\langle \Box_D^r (h_i\otimes e_j),f_j\rangle _D^{\Gamma }=\langle C_{ij}e_j,f_j\rangle _D^{\Gamma }=\gamma _jC_{ij}.
\end{align*}
for any $1\leq i,j\leq r$.
Thus, we we have the equation (\ref{eq;cartanmat3}).
\end{proof}
\begin{remark}
Note that the element $\gamma _jC_{ij}$ appeared in the right hand side of (\ref{eq;cartanmat3}) coincides with the $(i,j)$-entry of a matrix $\Gamma \cdot {}^tD\cdot A\cdot D\cdot \Gamma=C(A,D,\Gamma )\cdot \Gamma$.
This matrix $C(A,D,\Gamma )\cdot \Gamma $ is symmetric if $A$ is symmetric.
\end{remark}
The matrix $C(A,D,\Gamma )$ defined in Proposition \ref{pr;cartanmat} plays important roles in this paper.
\begin{defn}[Cartan matrix of a pentad of Cartan type]\label {defn;carmat}
For a pentad of Cartan type $P(r,n;A,D,\Gamma )$, we define the Cartan matrix $C(A,D,\Gamma )$ of $P(r,n;A,D,\Gamma )$ by
$$
C(A,D,\Gamma )=\Gamma \cdot {}^tD\cdot A\cdot D\in \mathrm {M}(n,n;\mathbb{C}).
$$
\end{defn}
Here, we introduce the notion of ``regularity'' of pentads of Cartan type as the following.
\begin{defn}\label {defn;regularPC}
Let $P(r,n;A,D,\Gamma )$ be a pentad of Cartan type.
We say that the pentad $P(r,n;A,D,\Gamma )$ is regular if and only if the Cartan matrix $C(A,D,\Gamma )$ is invertible.
\end{defn}
The following proposition is immediate from the definition of Cartan matrices of pentads of Cartan type.
\begin{pr}\label{pr;carmat_noninv}
Let $P(r,n;A,D,\Gamma )$ be a pentad of Cartan type.
If $r< n$, the Cartan matrix $C(A,D,\Gamma )$ is not invertible, i.e. $P(r,n;A,D,\Gamma )$ is not regular.
\end{pr}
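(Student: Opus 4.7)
The proof is essentially a rank computation. The plan is to bound the rank of $C(A,D,\Gamma ) = \Gamma \cdot {}^tD \cdot A \cdot D$ by the rank of $D$, and then observe that $\mathrm{rank}(D) \le r < n$ forces $C(A,D,\Gamma )$ to be singular.

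More concretely, first I would note that $D \in \mathrm{M}(r,n;\mathbb{C})$ implies $\mathrm{rank}(D) \le \min(r,n) = r$ under the hypothesis $r < n$. Then, using the standard inequality $\mathrm{rank}(XY) \le \min(\mathrm{rank}(X),\mathrm{rank}(Y))$, I would deduce that
\[
\mathrm{rank}({}^tD \cdot A \cdot D) \le \mathrm{rank}(D) \le r < n.
\]
Finally, since $\Gamma \in \mathrm{M}(n,n;\mathbb{C})$ is invertible by hypothesis (Definition \ref{defn;hr4}), left multiplication by $\Gamma$ preserves rank, so $\mathrm{rank}\, C(A,D,\Gamma ) = \mathrm{rank}({}^tD \cdot A \cdot D) < n$, which means the $n \times n$ matrix $C(A,D,\Gamma )$ is not invertible. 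By Definition \ref{defn;regularPC}, this is exactly the statement that $P(r,n;A,D,\Gamma )$ fails to be regular.

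There is no real obstacle here; the argument reduces entirely to the elementary fact that the rank of a product cannot exceed the rank of any factor, together with the fact that the thin factor $D$ has at most $r$ linearly independent columns out of $n$. The only thing worth being careful about is that I invoke $\Gamma$ invertible (not merely diagonal) to conclude that $\mathrm{rank}\, C(A,D,\Gamma )$ equals $\mathrm{rank}({}^tD\cdot A\cdot D)$ rather than merely being bounded by it, but this is already built into the definition of a pentad of Cartan type.
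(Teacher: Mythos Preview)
Your argument is correct and is exactly the reasoning the paper has in mind: the paper does not give a proof but simply states that the proposition ``is immediate from the definition of Cartan matrices of pentads of Cartan type,'' and your rank bound $\rank C(A,D,\Gamma )\le \rank D\le r<n$ is the one-line computation that makes this immediate. There is nothing to add.
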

Recall that pentads of Cartan type $P(r,n;A,D,\Gamma )$ and $P(r,n;A,D\cdot E_{\pi },\Gamma ^{\prime })$ are equivalent.
We need to define an equivalence relation among the set of Cartan matrices of pentads of Cartan type.
\begin{defn}\label {defn;cartanmat_eq}
Let $C$ and $C^{\prime }$ be Cartan matrices of some pentads of Cartan type.
If there exist a permutation matrix $E_{\pi }$ and an invertible diagonal matrix $\tilde {\Gamma }$ such that $C=\tilde {\Gamma }\cdot {}^t E_{\pi }\cdot C^{\prime }\cdot E_{\pi }$, we say that these Cartan matrices are equivalent.
\end{defn}
Even if Cartan matrices of two pentads of Cartan type are equivalent, it does not mean that the given pentads of Cartan type are equivalent (see Example \ref{ex;deetc0} below).
\begin{lemma}\label {lem;PCequiv_AT}
Let $P(r,n;A,D,\Gamma )$ and $P(r^{\prime },n^{\prime };A^{\prime },D^{\prime },\Gamma ^{\prime })$ be pentads of Cartan type.
If we assume that the matrices $D$ and $D^{\prime }$ have $\rank$ $n$ and $\rank$ $n^{\prime }$ respectively and that these pentads are equivalent, then $r=r^{\prime }$, $n=n^{\prime }$ and there exist a non-zero complex number $c\in \mathbb{C}$, a permutation $\pi:\{1,\ldots ,n\}\rightarrow \{1,\ldots ,n\}$ and a square matrix $T\in \mathrm {M}(r,r;\mathbb{C})$ such that 
$$
A=\frac{1}{c}{}^tT^{-1}\cdot A^{\prime }\cdot T^{-1},\quad D=T\cdot D^{\prime }\cdot {}^t E_{\pi },
$$
where $E_{\pi }=(\delta _{i,\pi (i)})$ is the permutation matrix of $\pi $.
\end{lemma}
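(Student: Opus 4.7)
The plan is to unpack the definition of equivalence of standard pentads for $P(r,n;A,D,\Gamma)$ and $P(r',n';A',D',\Gamma')$, extract the intertwining isomorphisms as matrices, and use the rank hypothesis to force $\sigma$ into the shape of a scaled permutation in the eigenbasis. Since an equivalence provides linear isomorphisms $\tau : \mathfrak{h}^r \to \mathfrak{h}^{r'}$ and $\sigma : \mathbb{C}_D^\Gamma \to \mathbb{C}_{D'}^{\Gamma'}$, comparing dimensions yields $r = r'$ and $n = n'$ at once.

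Next, I would carry out a weight-space analysis. The weight of $e_j \in \mathbb{C}_D^\Gamma$ under $\mathfrak{h}^r$ is the functional $\lambda_j$ with $\lambda_j(\epsilon_i) = d_{ij}$, so the weights of $\mathbb{C}_D^\Gamma$ correspond exactly to the columns of $D$. The hypothesis $\rank(D) = n$ forces those columns to be linearly independent, hence distinct, so every weight space of $\mathbb{C}_D^\Gamma$ is $1$-dimensional and spanned by the corresponding $e_j$; the same holds for $\mathbb{C}_{D'}^{\Gamma'}$ and the $e'_k$. Because $\sigma$ intertwines the action of $\mathfrak{h}^r$ with that of $\mathfrak{h}^{r'}$ via $\tau$, each $\sigma(e_j)$ is a weight vector with weight $\lambda_j \circ \tau^{-1}$; by the one-dimensionality of the weight spaces, there exists a unique permutation $\pi$ of $\{1,\ldots ,n\}$ and non-zero scalars $s_j$ with $\sigma(e_j) = s_j\, e'_{\pi(j)}$ and $\lambda_j = \mu_{\pi(j)} \circ \tau$, where $\mu_k(\epsilon'_l) = d'_{lk}$.

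Finally, I would translate the preceding identities into matrix form. Letting $T$ denote the matrix of $\tau$ in the bases $\{\epsilon_i\}$, $\{\epsilon'_i\}$ (with the convention matching the statement), the relation $\lambda_j = \mu_{\pi(j)} \circ \tau$ expands entry-wise into the identity $D = T \cdot D' \cdot {}^tE_\pi$, while the bilinear-form condition $B_A(a,b) = c \cdot B_{A'}(\tau(a),\tau(b))$ unpacks, using the definition of $B_A$ in terms of ${}^tA^{-1}$, into an identity of the form ${}^tA^{-1} = c \cdot {}^tT \cdot {}^t(A')^{-1} \cdot T$, which rearranges to $A = \frac{1}{c}\, {}^tT^{-1} \cdot A' \cdot T^{-1}$. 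The remaining pairing compatibility $\langle \cdot,\cdot\rangle_D^\Gamma = \langle \sigma(\cdot),\varsigma(\cdot)\rangle_{D'}^{\Gamma'}$ is absorbed into the choice of $\varsigma$ and simply expresses $\Gamma$ in terms of $\Gamma'$, the permutation $\pi$ and the scalars $s_j$, which is consistent with Proposition \ref{pr;gamma_indep} and with $\Gamma$'s absence from the conclusion. The main obstacle I expect is the bookkeeping of transposes and the direction of the permutation matrix $E_\pi$, so as to exactly match the stated form of the two identities; the conceptual content that $\sigma$ must be a scaled permutation on the eigenbasis is forced entirely by the rank hypothesis on $D$.
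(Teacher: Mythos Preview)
Your proposal is correct and follows essentially the same route as the paper's proof: from the equivalence data $(\tau,\sigma,\varsigma,c)$ you read off $r=r'$, $n=n'$, use the rank hypothesis on $D$ and $D'$ to see that each $e_j$ spans a one-dimensional weight space so that $\sigma$ must send $e_j$ to a scalar multiple of some $e'_{\pi(j)}$, and then convert the intertwining relation and the bilinear-form compatibility into the two matrix identities. The paper does exactly this, writing $\sigma$ as $S=E_\pi\cdot\Gamma_\pi^S$ with $\Gamma_\pi^S$ diagonal and then manipulating the entrywise identities; your phrasing in terms of weights is the same argument in slightly more invariant language, and your closing caveat about transpose/permutation bookkeeping is well placed, since that is indeed the only delicate point.
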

\begin{proof}
For an object $X$ of $P(r,n;A,D,\Gamma )$, we denote the corresponding one of $P(r^{\prime },n^{\prime };A^{\prime },D^{\prime },\Gamma ^{\prime })$ by adding ``prime'', $X^{\prime }$.
Assume that the pentads $P(r,n;A,D,\Gamma )$ and $P(r^{\prime },n^{\prime };A^{\prime },D^{\prime },\Gamma ^{\prime })$ are equivalent.
Then there exist a non-zero element $c\in \mathbb{C}\setminus \{0\}$ and linear isomorphisms $\tau :\mathfrak{h}^r\rightarrow \mathfrak{h}^{r^{\prime }}$ and $\sigma :\mathbb{C}_D^{\Gamma }\rightarrow \mathbb{C}_{D^{\prime }}^{\Gamma ^{\prime }}$ satisfying:
\begin{align}
\sigma (\Box _{D}^{r}(a\otimes v))=\Box _{D^{\prime }}^{r^{\prime }}(\tau (a)\otimes \sigma (v)),\quad B_{A}(a,b)=cB_{A^{\prime }}(\tau (a),\tau (b)),\label{eq;PCequiv}
\end{align}
for any $a,b\in \mathfrak{h}^{r}$ and $v\in \mathbb{C}_{D}^{\Gamma }$, $\phi \in \mathbb{C}_{-D}^{\Gamma }$.
From this, it follows that $r=r^{\prime }$ and $n=n^{\prime }$.
Take square matrices $T=(T_{ij})_{i,j}\in \mathrm {M}(r,r;\mathbb{C})$ and $S=(S_{ij})_{i,j}\in \mathrm {M}(n,n;\mathbb{C})$ such that 
\begin{align*}
\begin{pmatrix}\tau (\epsilon _1)\\ \vdots \\ \tau (\epsilon _r)\end{pmatrix}=T\cdot \begin{pmatrix}\epsilon _1^{\prime }\\ \vdots \\ \epsilon _r^{\prime }\end{pmatrix},\quad 
\begin{pmatrix}\sigma (e_1)\\ \vdots \\ \sigma (e_n)\end{pmatrix}=S\cdot \begin{pmatrix}e_1^{\prime }\\ \vdots \\e_n^{\prime }\end{pmatrix}.
\end{align*}
Here, since $\rank D=\rank D^{\prime }=n=n^{\prime }$, we have that the column vectors in $D$ and $D^{\prime }$ are linearly independent respectively.
Thus, there is a permutation $\pi :\{1,\ldots ,n\}\rightarrow \{1,\ldots ,n\}$ such that 
$$
\mathbb{C}\sigma (e_i)= \mathbb{C}e_{\pi (i)}^{\prime }
$$
for any $i=1,\ldots ,n$.
In particular, any row or column vector of $S$ has a unique entry which is not $0$.
Thus, if we put
$$
\Gamma _{\pi }^S=\diag \left (S_{\pi ^{-1}(1),1}, \ldots ,S_{\pi ^{-1}(n),n}\right ),
$$
then we have
\begin{align}
S=E_{\pi }\cdot \Gamma _{\pi }^{S}.\label {eq_5;carmat_inv}
\end{align}
Now, from the equations in (\ref{eq;PCequiv}), we have the following two equations:
\begin{align}
&d_{ij}\displaystyle\sum _{1\leq l\leq n}S_{jl}e_l^{\prime }=d_{ij}\sigma (e_j)=\sigma (\Box _D^{r }(\epsilon _i\otimes e_j))=\Box _{D^{\prime }}^{r ^{\prime }}(\tau (\epsilon _i)\otimes \sigma (e_j))\notag \\
&\quad =\displaystyle\sum _{1\leq k\leq r, 1\leq l\leq n}\Box _{D^{\prime }}^{r ^{\prime }}(T_{ik}\epsilon _k^{\prime }\otimes S_{jl}e_l^{\prime })=\displaystyle\sum _{1\leq k\leq r, 1\leq l\leq n}T_{ik}d_{kl}^{\prime }S_{jl}e_l^{\prime }=\displaystyle\sum _{1\leq l\leq n}\left (\displaystyle\sum _{1\leq k\leq r}T_{ik}d_{kl}^{\prime }\right )S_{jl}e_l^{\prime },\label {6eqs;carmat_inv} \\
&B_A(\epsilon _i,\epsilon _j)=cB_{A^{\prime }}(\tau (\epsilon _i),\tau (\epsilon _j))=c\displaystyle\sum _{1\leq k,l\leq r}B_{A^{\prime }}(T_{ik}\epsilon _k^{\prime },T_{jl}\epsilon _l^{\prime })=c\displaystyle\sum _{1\leq k,l\leq r}T_{ik}B_{A^{\prime }}(\epsilon _k^{\prime },\epsilon _l^{\prime })T_{jl}\label {4eqs;carmat_inv}
\end{align}
for any $i,j$.
We have the following equations from the equations (\ref{6eqs;carmat_inv}) and (\ref{4eqs;carmat_inv})
\begin{align}
&d_{ij}S_{jl}=\left (\sum _{1\leq k\leq r}T_{ik}d_{kl}^{\prime }\right )S_{jl},\label {eq_1;carmat_inv}\\
&{}^tA^{-1}=cT\cdot {}^t(A^{\prime })^{-1}\cdot {}^t T\label {eq_7;carmat_inv}
\end{align}
for any $i,j,l$.
From (\ref{eq_1;carmat_inv}), we have
\begin{align*}
&\text{(the $(i,l)$-entry of $D\cdot S$)}=\sum _{j=1}^nd_{ij}S_{jl}=\sum _{j=1}^n \left (\sum _{1\leq k\leq r}T_{ik}d_{kl}^{\prime }\right )S_{jl}=\left (\sum _{1\leq k\leq r}T_{ik}d_{kl}^{\prime }\right )\sum _{j=1}^nS_{jl}\notag \\
&\quad =\left (\sum _{1\leq k\leq r}T_{ik}d_{kl}^{\prime }\right )S_{\pi ^{-1}(l),l}=\text{(the $(i,l)$-entry of $T\cdot D^{\prime }\cdot \Gamma _{\pi }^S$)}
\end{align*}
for any $1\leq i\leq r$ and $1\leq l\leq n$. 
Thus,
\begin{align}
D\cdot S=T\cdot D^{\prime }\cdot \Gamma _{\pi }^S.\label {eq_4;carmat_inv}
\end{align}
From the equations (\ref {eq_5;carmat_inv}),  (\ref {eq_7;carmat_inv}) and (\ref {eq_4;carmat_inv}), we have
\begin{align*}
A=\frac{1}{c}{}^tT^{-1}\cdot A^{\prime }\cdot T^{-1}\quad \text {and} \quad D=T\cdot D^{\prime }\cdot {}^tE_{\pi }.
\end{align*}
This completes the proof.
\end{proof}
\begin{lemma}\label{lem;PCequiv}
Let $P(r,n;A,D,\Gamma )$ and $P(r^{\prime },n^{\prime };A^{\prime },D^{\prime },\Gamma ^{\prime })$ be pentads of Cartan type such that the matrices $D$ and $D^{\prime }$ have rank $n$ and $n^{\prime }$.
If these pentads are equivalent, then their Cartan matrices are equivalent.
\end{lemma}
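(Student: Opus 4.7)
The plan is to deduce the equivalence of Cartan matrices from Lemma \ref{lem;PCequiv_AT} by a direct matrix computation. Since the pentads are equivalent and the rank hypothesis holds, Lemma \ref{lem;PCequiv_AT} gives $r=r'$, $n=n'$, a non-zero scalar $c$, a permutation $\pi$ of $\{1,\ldots,n\}$, and an invertible $T\in\mathrm{M}(r,r;\mathbb{C})$ with
\begin{align*}
A=\tfrac{1}{c}\,{}^tT^{-1}\cdot A'\cdot T^{-1},\qquad D=T\cdot D'\cdot {}^tE_{\pi}.
\end{align*}
My first step will be to plug these identities into $C(A,D,\Gamma)=\Gamma\cdot{}^tD\cdot A\cdot D$ and observe that the factors of $T$ telescope. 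Explicitly,
\begin{align*}
C(A,D,\Gamma)
&=\Gamma\cdot{}^t\bigl(T\cdot D'\cdot{}^tE_{\pi}\bigr)\cdot\bigl(\tfrac{1}{c}{}^tT^{-1}\cdot A'\cdot T^{-1}\bigr)\cdot\bigl(T\cdot D'\cdot{}^tE_{\pi}\bigr)\\
&=\tfrac{1}{c}\,\Gamma\cdot E_{\pi}\cdot {}^tD'\cdot A'\cdot D'\cdot{}^tE_{\pi},
\end{align*}
and then rewrite the middle block using $(\Gamma')^{-1}\cdot C(A',D',\Gamma')={}^tD'\cdot A'\cdot D'$ to obtain
\begin{align*}
C(A,D,\Gamma)=\tfrac{1}{c}\,\Gamma\cdot E_{\pi}\cdot(\Gamma')^{-1}\cdot C(A',D',\Gamma')\cdot{}^tE_{\pi}.
\end{align*}

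The second step is to massage the prefactor $\tfrac{1}{c}\Gamma\cdot E_{\pi}\cdot(\Gamma')^{-1}$ into the required form $\tilde{\Gamma}\cdot{}^tE_{\tilde{\pi}}$. Here I use the elementary fact that conjugation of a diagonal matrix by a permutation matrix is again diagonal: $E_{\pi}\cdot(\Gamma')^{-1}\cdot E_{\pi}^{-1}$ is diagonal, so there is a diagonal matrix $\Lambda$ with $E_{\pi}\cdot(\Gamma')^{-1}=\Lambda\cdot E_{\pi}$. Setting $\tilde{\Gamma}:=\tfrac{1}{c}\Gamma\cdot\Lambda$ (an invertible diagonal matrix) and $\tilde{\pi}:=\pi^{-1}$, so that $E_{\pi}={}^tE_{\tilde{\pi}}$ and ${}^tE_{\pi}=E_{\tilde{\pi}}$, the above becomes
\begin{align*}
C(A,D,\Gamma)=\tilde{\Gamma}\cdot{}^tE_{\tilde{\pi}}\cdot C(A',D',\Gamma')\cdot E_{\tilde{\pi}},
\end{align*}
which is exactly the equivalence relation of Cartan matrices from Definition \ref{defn;cartanmat_eq}.

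The computation is essentially bookkeeping; the only conceptual point is the cancellation of $T$ (which encodes that the Cartan matrix does not see the change of basis on $\mathfrak{h}^r$ itself, since $A$ and $D$ transform contragrediently) together with the observation that permutation matrices normalize the group of diagonal matrices. The main thing to be careful with is keeping track of transposes of $E_{\pi}$ and of the order of the diagonal and permutation factors; everything else is forced by Lemma \ref{lem;PCequiv_AT}.
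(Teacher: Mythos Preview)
Your proof is correct and follows essentially the same route as the paper: invoke Lemma \ref{lem;PCequiv_AT}, substitute the resulting expressions for $A$ and $D$ into $C(A,D,\Gamma)$, cancel $T$, and insert $(\Gamma')^{-1}\Gamma'$ to recover $C(A',D',\Gamma')$. The paper stops at $\tfrac{1}{c}\,\Gamma\cdot E_{\pi}\cdot(\Gamma')^{-1}\cdot C(A',D',\Gamma')\cdot{}^tE_{\pi}$ and simply declares equivalence, whereas you go slightly further and explicitly commute the diagonal factor past $E_{\pi}$ and relabel $\tilde{\pi}=\pi^{-1}$ to match Definition \ref{defn;cartanmat_eq} exactly; this extra care is welcome but the argument is the same.
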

\begin{proof}
We retain to use the notations in Lemma \ref {lem;PCequiv_AT} and its proof.
From the result of Lemma \ref {lem;PCequiv_AT}, we have 
\begin{align*}
&C(A,D,\Gamma )=\Gamma \cdot {}^tD\cdot A\cdot D=\Gamma \cdot E_{\pi }\cdot {}^t D^{\prime }\cdot {}^t T\cdot \frac{1}{c}{}^tT^{-1}\cdot A^{\prime }\cdot T^{-1}\cdot T\cdot D^{\prime }\cdot {}^t E_{\pi }\\
&\quad =\frac{1}{c}\Gamma \cdot E_{\pi }\cdot {}^t D^{\prime }\cdot A^{\prime }\cdot D^{\prime }\cdot {}^t E_{\pi }=\frac{1}{c}\Gamma \cdot E_{\pi }\cdot (\Gamma ^{\prime })^{-1}\cdot C(A^{\prime },D^{\prime },\Gamma ^{\prime })\cdot {}^t E_{\pi }\\
&\quad \simeq C(A^{\prime },D ^{\prime },\Gamma ^{\prime })\quad \text{(as Cartan matrices).}
\end{align*}
Thus, we have our claim.
\end{proof}
\begin{remark}
If $P(r,n;A,D,\Gamma )$ is regular, it must hold that $\rank D=n$.
However, even if $P(r,n;A,D,\Gamma )$ satisfies $\rank D=n$, the pentad is not always regular (see Example \ref{ex;deetc0}).
\end{remark}
Recall that a direct sum of Lie algebras associated with a standard pentad also corresponds to a standard pentad, called a direct sum of standard pentads (Definition \ref{defn;d_sum_stap} and Proposition \ref{pr;d_sum_stap}).
It is easy to show that a direct sum of pentads of Cartan type is also a pentad of Cartan type which can be written using the following data.
\begin{pr}\label{pr;d_sum_cartanpentads}
Let $P(r,n;A,D,\Gamma )$ and $P(r^{\prime },n^{\prime };A^{\prime },D^{\prime },\Gamma^{\prime } )$ be pentads of Cartan type.
Then the direct sum $
P(r,n;A,D,\Gamma )\oplus P(r^{\prime },n^{\prime };A^{\prime },D^{\prime },\Gamma^{\prime } )
$ of these pentads is also a pentad of Cartan type which is written by:
$$
P\left (r+r^{\prime },n+n^{\prime };\left (\begin{array}{c|c}A&O\\ \hline O&A^{\prime }\end{array}\right ),\left (\begin{array}{c|c}D&O\\ \hline O&D^{\prime }\end{array}\right ),\left (\begin{array}{c|c}\Gamma &O\\ \hline O&\Gamma ^{\prime }\end{array}\right ) \right ).
$$
\end{pr}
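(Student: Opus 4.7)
The plan is to exhibit a natural identification of the direct sum pentad with the Cartan-type pentad on the right-hand side, and then verify that all the defining data (representation, pairing, bilinear form) match block by block. Since the direct sum of standard pentads is again standard by Proposition \ref{pr;d_sum_stap}, only the Cartan-type structure has to be checked.

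First I identify the Lie algebras. The vector space $\mathfrak{h}^r\oplus \mathfrak{h}^{r'}$ is canonically isomorphic to $\mathfrak{h}^{r+r'}$ via $(\epsilon_i,0)\mapsto \epsilon_i$ for $1\leq i\leq r$ and $(0,\epsilon_j)\mapsto \epsilon_{r+j}$ for $1\leq j\leq r'$, and this isomorphism respects the (trivial) commutative bracket. Similarly, I identify $\mathbb{C}_D^{\Gamma}\oplus \mathbb{C}_{D'}^{\Gamma'}$ with $\mathbb{C}_{\tilde D}^{\tilde \Gamma}$ (and the corresponding dual for $\mathbb{C}_{-D}^{\Gamma}\oplus \mathbb{C}_{-D'}^{\Gamma'}$) by sending $(e_i,0)\mapsto e_i$, $(0,e_j)\mapsto e_{n+j}$, where I write $\tilde D$, $\tilde \Gamma$, $\tilde A$ for the three block-diagonal matrices appearing in the statement.

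Next I check that the representations agree. By Definition \ref{defn;d_sum_stap}, the action of $(\epsilon_i,0)$ on $(e_j,0)$ is $(d_{ij}e_j,0)$ and on $(0,e_k)$ is $(0,0)$; under the identifications above, this is exactly the diagonal action read off from the $(i,j)$-entry of $\tilde D$, since that entry is $d_{ij}$ in the first block and $0$ off the block. The symmetric check for $(0,\epsilon_\ell)$ is identical. Thus $\rho^1\boxplus\rho^2 = \Box_{\tilde D}^{r+r'}$ under the identifications, and likewise for $\Box_{-\tilde D}^{r+r'}$.

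Finally I verify the pairing and the bilinear form. The canonical pairing on the direct sum is block-diagonal by definition, and since $\tilde \Gamma$ is block-diagonal with entries $(\gamma_1,\ldots,\gamma_n,\gamma'_1,\ldots,\gamma'_{n'})$, one has $\langle e_i,f_j\rangle = \delta_{ij}\tilde \gamma_i$, which matches $\langle\cdot,\cdot\rangle_{\tilde D}^{\tilde \Gamma}$. For the bilinear form, the only slightly delicate point is that the defining formula uses ${}^tA^{-1}$; but since $\tilde A$ is block-diagonal with invertible blocks, $\tilde A^{-1}$ and hence ${}^t\tilde A^{-1}$ are also block-diagonal with blocks ${}^tA^{-1}$ and ${}^t(A')^{-1}$. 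Plugging this into Definition \ref{defn;hr3} shows that $B_{\tilde A}$ evaluated on pairs of the identified basis vectors coincides with $B_A \oplus B_{A'}$. This completes the identification, and by the uniqueness of the data in Definition \ref{defn;cartan} the direct sum pentad is precisely $P(r+r',n+n';\tilde A,\tilde D,\tilde \Gamma)$.
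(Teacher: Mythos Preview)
Your proof is correct: it is precisely the routine block-by-block verification that the paper omits (the paper states this proposition without proof, having remarked just before it that the claim is easy). One small citation slip: the fact that a direct sum of standard pentads is again standard is asserted in Definition~\ref{defn;d_sum_stap}, not Proposition~\ref{pr;d_sum_stap}.
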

\begin{pr}
Under the notation of Proposition \ref{pr;d_sum_cartanpentads}, we  have an isomorphism of graded Lie algebras:
\begin{align*}
&L\left (r+r^{\prime },n+n^{\prime };\left (\begin{array}{c|c}A&O\\ \hline O&A^{\prime }\end{array}\right ),\left (\begin{array}{c|c}D&O\\ \hline O&D^{\prime }\end{array}\right ),\left (\begin{array}{c|c}\Gamma &O\\ \hline O&\Gamma ^{\prime }\end{array}\right ) \right )\\
&\quad \simeq L(r,n;A,D,\Gamma )\oplus L(r^{\prime },n^{\prime };A^{\prime },D^{\prime },\Gamma^{\prime } ).
\end{align*}
\end{pr}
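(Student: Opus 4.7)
The plan is to observe that this proposition is a direct corollary of the two results we already have in hand, and simply chain them together. Since a pentad of Cartan type is, by Definition \ref{defn;cartan}, a particular kind of standard pentad (of the form $(\mathfrak{h}^r,\Box_D^r,\mathbb{C}_D^\Gamma,\mathbb{C}_{-D}^\Gamma,B_A)$), the general machinery for direct sums of standard pentads applies without modification.

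First I would invoke Proposition \ref{pr;d_sum_cartanpentads}, which tells us that the direct sum of $P(r,n;A,D,\Gamma)$ and $P(r^{\prime },n^{\prime };A^{\prime },D^{\prime },\Gamma^{\prime })$, in the sense of the direct sum of standard pentads (Definition \ref{defn;d_sum_stap}), is itself a pentad of Cartan type, and is given precisely by the block-diagonal data
$$
P\left(r+r^{\prime },n+n^{\prime };\left(\begin{array}{c|c}A&O\\ \hline O&A^{\prime}\end{array}\right),\left(\begin{array}{c|c}D&O\\ \hline O&D^{\prime}\end{array}\right),\left(\begin{array}{c|c}\Gamma&O\\ \hline O&\Gamma^{\prime}\end{array}\right)\right).
$$
Thus the left-hand side of the desired isomorphism is, by definition, the Lie algebra associated with the standard pentad $P(r,n;A,D,\Gamma)\oplus P(r^{\prime },n^{\prime };A^{\prime },D^{\prime },\Gamma^{\prime })$.

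Next I would apply Proposition \ref{pr;d_sum_stap}, which asserts that for any two standard pentads the Lie algebra associated with their direct sum is canonically isomorphic (as a graded Lie algebra) to the direct sum of their associated Lie algebras. Applied to our two pentads of Cartan type, this yields
$$
L\bigl(P(r,n;A,D,\Gamma)\oplus P(r^{\prime },n^{\prime };A^{\prime },D^{\prime },\Gamma^{\prime })\bigr)\simeq L(r,n;A,D,\Gamma)\oplus L(r^{\prime },n^{\prime };A^{\prime },D^{\prime },\Gamma^{\prime }),
$$
which is exactly the claimed isomorphism. There is no genuine obstacle: all the work has been done upstream, both in setting up the correspondence between block-diagonal data and direct sum pentads, and in establishing that the construction $L(-)$ commutes with direct sums of standard pentads.
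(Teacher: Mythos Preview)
Your proof is correct and matches the paper's (implicit) reasoning: the paper states this proposition without proof, evidently because it follows immediately from Proposition \ref{pr;d_sum_cartanpentads} together with Proposition \ref{pr;d_sum_stap}, exactly as you argue.
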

\begin{pr}\label{pr;d_sum_cartan}
Under the notation of Proposition \ref{pr;d_sum_cartanpentads}, the Cartan matrix of 
$
P(r,n;A,D,\Gamma )\oplus P(r^{\prime },n^{\prime };A^{\prime },D^{\prime },\Gamma^{\prime } )
$
is given by
$$
C\left ( \left (\begin{array}{c|c}A&O\\ \hline O&A^{\prime }\end{array}\right ),\left (\begin{array}{c|c}D&O\\ \hline O&D^{\prime }\end{array}\right ),\left (\begin{array}{c|c}\Gamma &O\\ \hline O&\Gamma ^{\prime }\end{array}\right )
\right )=\left (\begin{array}{c|c}C(A,D,\Gamma )&O\\ \hline O&C(A^{\prime },D^{\prime },\Gamma ^{\prime })\end{array}\right ).
$$
\end{pr}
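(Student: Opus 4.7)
The plan is to prove this purely by block matrix multiplication, applying the definition $C(A,D,\Gamma) = \Gamma\cdot {}^tD\cdot A\cdot D$ (Definition \ref{defn;carmat}) to the data describing the direct sum pentad as given in Proposition \ref{pr;d_sum_cartanpentads}. There is no conceptual obstruction here since all three matrices $A, D, \Gamma$ assemble block-diagonally, and transposition respects the block-diagonal structure.

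First, I would set the notation $\bar A$, $\bar D$, $\bar\Gamma$ for the block-diagonal matrices appearing in Proposition \ref{pr;d_sum_cartanpentads}. Then observe immediately that
\[
{}^t\bar D=\left(\begin{array}{c|c}{}^tD&O\\ \hline O&{}^tD^{\prime }\end{array}\right),
\]
and that the product of two block-diagonal matrices with compatible block sizes is block-diagonal, obtained by multiplying blockwise. Applying this twice (first compute ${}^t\bar D\cdot\bar A$, then multiply by $\bar D$ on the right, then by $\bar\Gamma$ on the left) gives
\[
\bar\Gamma\cdot {}^t\bar D\cdot\bar A\cdot\bar D=\left(\begin{array}{c|c}\Gamma\cdot {}^tD\cdot A\cdot D&O\\ \hline O&\Gamma^{\prime }\cdot {}^tD^{\prime }\cdot A^{\prime }\cdot D^{\prime }\end{array}\right).
\]
By Definition \ref{defn;carmat}, the two diagonal blocks are precisely $C(A,D,\Gamma)$ and $C(A^{\prime },D^{\prime },\Gamma^{\prime })$, which establishes the desired identity.

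No step here is a genuine obstacle; the only thing to be careful about is matching block sizes ($r\times r$ and $r^{\prime }\times r^{\prime }$ for $A$, $A^{\prime }$; $r\times n$ and $r^{\prime }\times n^{\prime }$ for $D$, $D^{\prime }$; $n\times n$ and $n^{\prime }\times n^{\prime }$ for $\Gamma$, $\Gamma^{\prime }$) so that all the block products make sense, which they do by construction.
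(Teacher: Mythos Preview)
Your proposal is correct. The paper states this proposition without proof, treating it as an immediate consequence of the definition; your block-diagonal multiplication argument is exactly the straightforward verification the paper is implicitly invoking.
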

In particular, we can see that a direct sum $
P(r,n;A,D,\Gamma )\oplus P(r^{\prime },n^{\prime };A^{\prime },D^{\prime },\Gamma^{\prime } )
$
is regular if and only if both $
P(r,n;A,D,\Gamma )$ and $P(r^{\prime },n^{\prime };A^{\prime },D^{\prime },\Gamma^{\prime } )
$
are regular.
From the rank of $D$, we can read some properties of $P(r,n;A,D,\Gamma )$.
\begin{pr}\label{pr;propertiesbox}
Let $P(r,n;A,D,\Gamma )=(\mathfrak{h}^r,\Box^r_D,{\mathbb{C}}^{\Gamma }_D,{\mathbb{C}}^{\Gamma }_{-D},B_A)$ be a pentad of Cartan type.
On the representation $\Box_D^r:\mathfrak{h}^r\otimes {\mathbb{C}}^{\Gamma }_D\rightarrow {\mathbb{C}}^{\Gamma }_D$ and on the elements $h_1,\ldots ,h_n$, the followings hold:
\begin{itemize}
\item [\rm (i)]{$\Ann {\mathbb{C}}_D^{\Gamma }= \Set {(c_1,\ldots ,c_r)\in \mathfrak{h}^r| \begin{pmatrix}c_1&\cdots &c_r\end{pmatrix}\cdot D=\begin{pmatrix}0&\cdots &0\end{pmatrix}}$,}
\item [\rm (ii)]{the representation $\Box_D^r$ is surjective if and only if the matrix $D$ does not have a zero-column vector,}
\item [\rm (iii)]{complex numbers $c_1,\ldots ,c_n\in \mathbb{C}$ satisfy $\sum _{i=1}^nc_ih_i=0$ if and only if they satisfy 
$$
\begin{pmatrix}c_1&\cdots &c_n\end{pmatrix} \cdot \Gamma \cdot {}^t D=\begin{pmatrix}0&\cdots &0\end{pmatrix}.
$$}
\end{itemize}
\end{pr}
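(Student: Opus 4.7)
The plan is to prove the three assertions separately by direct computation on bases, using only the defining formulas of $\Box_D^r$, $\langle\cdot,\cdot\rangle_D^\Gamma$, and the explicit description of $h_i$ already established in Propositions \ref{pr;phi_cartan} and \ref{pr;cartanmat}. No conceptual difficulty is expected; the argument is essentially a translation between operator equations on bases and matrix equations on the coefficient data $D$, $\Gamma$.

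For (i), I would take an arbitrary $a = (c_1,\ldots,c_r) = \sum_{i=1}^r c_i\epsilon_i \in \mathfrak{h}^r$ and compute, for each basis vector $e_j$ of $\mathbb{C}_D^\Gamma$,
\[
\Box_D^r(a\otimes e_j) = \sum_{i=1}^r c_i d_{ij} e_j = \bigl((c_1,\ldots,c_r)\cdot D\bigr)_j\, e_j.
\]
Since $\{e_1,\ldots,e_n\}$ is a basis, $a$ annihilates every element of $\mathbb{C}_D^\Gamma$ iff the coefficient of each $e_j$ vanishes, which is exactly the matrix condition stated in (i).

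For (ii), the image of $\Box_D^r$ is spanned by the vectors $\Box_D^r(\epsilon_i\otimes e_j) = d_{ij}e_j$. Because the $e_j$ form a basis of $\mathbb{C}_D^\Gamma$, this image equals the whole space iff for every $j$ there exists at least one $i$ with $d_{ij}\neq 0$, i.e.\ iff no column of $D$ is a zero column.

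For (iii), I would invoke the column-vector form of $h_i$ extracted in equation (\ref{eq;phi1}) in the proof of Proposition \ref{pr;phi_cartan}: under the identification $\mathfrak{h}^r\cong \mathrm{M}(r,1;\mathbb{C})$, we have $h_i = \gamma_i\,{}^tA\cdot D\cdot \mathbf{e}_i^{\mathrm{col}}$, where $\mathbf{e}_i^{\mathrm{col}}$ is the $i$-th standard column vector. Linearly combining these gives
\[
\sum_{i=1}^n c_ih_i \;=\; {}^tA \cdot D \cdot \Gamma \cdot {}^t(c_1,\ldots,c_n),
\]
which vanishes iff $D\cdot\Gamma\cdot{}^t(c_1,\ldots,c_n) = 0$ since ${}^tA$ is invertible; transposing yields the stated row-vector condition. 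The only real point requiring care throughout is maintaining a consistent identification between $\mathfrak{h}^r$ viewed as row vectors $(c_1,\ldots,c_r)$ and as columns in $\mathrm{M}(r,1;\mathbb{C})$, which is the identification already fixed in the proof of Proposition \ref{pr;phi_cartan}; beyond this bookkeeping, no obstacle arises.
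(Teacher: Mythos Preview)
Your proof is correct and, for parts (i) and (iii), matches the paper's argument essentially verbatim: compute the action of a generic element on the basis $\{e_j\}$ for (i), and use the column form $h_i = {}^tA\cdot D\cdot \Gamma\cdot \mathbf{e}_i^{\mathrm{col}}$ together with invertibility of ${}^tA$ for (iii).

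For part (ii) there is a small but genuine difference worth noting. You compute the image of $\Box_D^r$ directly as the span of the $d_{ij}e_j$ and observe that this hits every $\mathbb{C}e_j$ iff the $j$-th column of $D$ is nonzero. The paper instead invokes a general lemma about completely reducible representations---that surjectivity is equivalent to the nonexistence of a nonzero vector annihilated by all of $\mathfrak{h}^r$---and then checks when such an annihilated vector can exist. Your route is more elementary and self-contained; the paper's route foregrounds the complete reducibility of $\Box_D^r$, which is conceptually relevant (it is what makes the image a direct summand) but not strictly needed for this particular computation.
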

\begin{proof}
{\rm (i)}
Take an arbitrary element $c_1\epsilon _1+\cdots +c_r\epsilon _r\in \Ann {\mathbb{C}}_D^{\Gamma }\subset \mathfrak {h}^r$ $(c_1,\ldots ,c_r\in {\mathbb{C}})$.
Then, it satisfies
\begin{align*}
\Box_D^r((c_1\epsilon _1+\cdots +c_r\epsilon _r)\otimes v)=0
\end{align*}
for any $v\in {\mathbb{C}}^{\Gamma }_D$.
In particular cases where $v=e_i$ $(i=1,\ldots ,n)$, we have equations
\begin{align*}
0=\Box_D^r((c_1\epsilon _1+\cdots +c_r\epsilon _r)\otimes e_i)=c_1d_{1i}+\cdots +c_rd_{ri}
\end{align*}
for all $i=1,\ldots ,n$.
Thus, we have that
\begin{align*}
\begin{pmatrix}
c_1&\cdots &c_r
\end{pmatrix}
\cdot 
D=
\begin{pmatrix}
c_1&\cdots &c_r
\end{pmatrix}
\cdot 
\begin{pmatrix}
d_{11}&\cdots &d_{1n}\\
\vdots &\ddots &\vdots \\
d_{r1}&\cdots &d_{rn}
\end{pmatrix}
=\begin{pmatrix}0&\cdots &0\end{pmatrix}
\end{align*}
and that $\Ann \mathbb{C}_D^{\Gamma }\subset \left \{(c_1,\ldots ,c_r)\in \mathfrak{h}^r\mid \begin{pmatrix}c_1&\cdots &c_r\end{pmatrix}\cdot D=\begin{pmatrix}0&\cdots &0\end{pmatrix}\right \}$.
Since the elements $e_1,\ldots ,e_n$ span $\mathbb{C}_D^{\Gamma }$, the converse inclusion can be shown by a similar argument.
\par {\rm (ii)}
In order to prove {\rm (ii)}, we use the following claim on the general theory of Lie algebras:
\begin{itemize}
\item {a completely reducible representation $\pi $ on $U\neq \{0\}$ of a Lie algebra $\mathfrak{l}$, $\pi :\mathfrak{l}\otimes U\rightarrow U$, is surjective if and only if there does not exist a non-zero element $u\in U$ such that $\pi (\mathfrak{l}\otimes u)=\{0\}$.}
\end{itemize}
Now, suppose that $\Box _D^r$ is not surjective.
Then, since $\Box_D^r$ is completely reducible, we have that there exists a non-zero element $v$ satisfying $\Box _D^r(\mathfrak{h}^r\otimes v)=\{0\}$.
Take elements $c_1,\ldots ,c_r\in {\mathbb{C}}$ such that $v=c_1e _1+\cdots +c_re _r$.
From the assumption that $v\neq 0$, there exists an integer $k$ such that $c_k\neq 0$.
Then, from $\pi (\epsilon _1\otimes v)=\cdots =\pi (\epsilon _r\otimes v)=0$, we have $d_{1k}c_ke_k=\cdots =d_{rk}c_ke_k=0\in {\mathbb{C}}_D^{\Gamma }$ and, thus, $d_{1k}=\cdots =d_{rk}=0$.
It means that the $k$-th column of the matrix $D$ is zero.
Conversely, suppose that the $l$-th column of $D$ is zero.
Then $e_l\in {\mathbb{C}}_D^{\Gamma }$ satisfies $\pi (\epsilon _1\otimes e_l)=\cdots =\pi (\epsilon _r\otimes e_l)=0$, and thus, $\pi (\mathfrak{h}^r\otimes e_l)=\{0\}$.
\par {\rm (iii)}
Let us suppose that $c_1^{\prime },\ldots ,c_n^{\prime }\in \mathbb{C}$ satisfy $c_1^{\prime }h_1+\cdots +c_n^{\prime }h_n=0$.
Then, from the equation (\ref{eq;phi1}), we have an equation
\begin{align}
{}^tA\cdot D\cdot \Gamma \cdot \begin{pmatrix}c_1^{\prime }\\ \vdots \\c_n^{\prime }\end{pmatrix}=\begin{pmatrix}0\\ \vdots \\0\end{pmatrix}\in \mathrm {M}(r,1;{\mathbb{C}}).
\end{align}
Since ${}^tA$ is invertible, we have an equation $ D\cdot \Gamma \cdot {}^t \begin{pmatrix}c_1^{\prime }& \cdots &c_n^{\prime }\end{pmatrix}=0$.
Thus, we can deduce that
$$
\Set {(c_1^{\prime },\ldots ,c_n^{\prime })\in \mathfrak{gl}_1^n| \sum _{i=1}^n c_i^{\prime }h_i=0}\subset \Set {(c_1,\ldots ,c_n)\in \mathfrak{gl}_1^n| D\cdot \Gamma \cdot \begin{pmatrix}c_1\\ \vdots \\c_n\end{pmatrix} =\begin{pmatrix}0\\ \vdots \\0\end{pmatrix}}.
$$
We can show the converse inclusion by a similar argument.
\end{proof}
From Proposition \ref{pr;propertiesbox}, the following claims are immediate.
\begin{col}\label{col1}
For a pentad of Cartan type $P(r,n;A,D,\Gamma )$, we have the following claims:
\begin{itemize}
\item [\rm (iv)]{$\dim \Ann {\mathbb{C}}_D^{\Gamma }=r-\rank D$.
In particular, the representation $\Box_D^r$ is faithful if and only if $\rank D= r$,}
\item [\rm (v)]{$\dim \Phi (r,n;A,D,\Gamma )({\mathbb{C}}_D^{\Gamma }\otimes {\mathbb{C}}_{-D}^{\Gamma })=\rank D$. 
In particular, the elements $h_1,\ldots ,h_n$ are linearly independent if and only if $\rank D=n$.}
\end{itemize}
\end{col}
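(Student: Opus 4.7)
The plan is to derive both statements by combining Proposition \ref{pr;propertiesbox} with the general dimension formula from Proposition \ref{pr;kerimphi}; there is no substantive obstacle, since everything reduces to elementary rank-nullity.

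For \text{\rm (iv)}, I would apply Proposition \ref{pr;propertiesbox}{\rm (i)}, which identifies $\Ann \mathbb{C}_D^{\Gamma}$ with the left null space of $D$, namely
$$
\Ann \mathbb{C}_D^{\Gamma} = \left\{(c_1,\ldots,c_r) \in \mathfrak{h}^r \,\Big|\, \begin{pmatrix} c_1 & \cdots & c_r \end{pmatrix} \cdot D = \begin{pmatrix} 0 & \cdots & 0 \end{pmatrix} \right\}.
$$
By the rank-nullity theorem applied to the linear map $\mathbb{C}^r \to \mathbb{C}^n$ given by left-multiplication by $D$, this subspace has dimension $r - \rank D$. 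The ``in particular'' clause is then immediate: faithfulness is by definition the condition $\Ann \mathbb{C}_D^{\Gamma} = \{0\}$, which is equivalent to $\rank D = r$.

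For \text{\rm (v)}, the quickest route is to combine \text{\rm (iv)} with Proposition \ref{pr;kerimphi} applied to the standard pentad $P(r,n;A,D,\Gamma)$ (standard by Proposition \ref{pr;cartan_stap}). Since $\mathfrak{h}^r$ is finite-dimensional of dimension $r$, Proposition \ref{pr;kerimphi} gives
$$
\dim \Ann \mathbb{C}_D^{\Gamma} + \dim \Phi(r,n;A,D,\Gamma)(\mathbb{C}_D^{\Gamma} \otimes \mathbb{C}_{-D}^{\Gamma}) = r,
$$
and substituting \text{\rm (iv)} yields $\dim \Phi(r,n;A,D,\Gamma)(\mathbb{C}_D^{\Gamma} \otimes \mathbb{C}_{-D}^{\Gamma}) = \rank D$. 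Alternatively, one can argue directly: by (\ref{eq;phi_cartan}) the image of the $\Phi$-map is spanned by $h_1,\ldots,h_n$, and Proposition \ref{pr;propertiesbox}{\rm (iii)} combined with the invertibility of $\Gamma$ shows that the kernel of the linear map $\mathbb{C}^n \to \mathfrak{h}^r$ sending $(c_1,\ldots,c_n) \mapsto \sum_i c_i h_i$ has dimension $n - \rank D$, so its image has dimension $\rank D$.

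The ``in particular'' clause of \text{\rm (v)} then follows: the $h_i$'s are linearly independent precisely when their span has dimension $n$, i.e.\ when $\rank D = n$. As noted, there is no real difficulty here; the content lies entirely in assembling Propositions \ref{pr;kerimphi} and \ref{pr;propertiesbox} and reading off the dimensions.
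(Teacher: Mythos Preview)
Your proof is correct and essentially matches the paper's. For {\rm (iv)} both you and the paper invoke Proposition \ref{pr;propertiesbox}{\rm (i)} and rank-nullity; for {\rm (v)} your ``alternative'' direct argument via Proposition \ref{pr;propertiesbox}{\rm (iii)} is the paper's main proof, while your primary route via Proposition \ref{pr;kerimphi} is exactly what the paper records in the Remark immediately following the Corollary as a second proof.
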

\begin{proof}
{\rm (iv)} 
It is immediate from {\rm (i)} in Proposition \ref{pr;propertiesbox}.
\par {\rm (v)}
Note that $ \Phi (r,n;A,D,\Gamma )({\mathbb{C}}_D^{\Gamma }\otimes {\mathbb{C}}_{-D}^{\Gamma })$ is spanned by $h_1,\ldots ,h_n$ as a ${\mathbb{C}}$-vector space.
Then, we have
\begin{align*}
&\dim \Phi (r,n;A,D,\Gamma )({\mathbb{C}}_D^{\Gamma }\otimes {\mathbb{C}}_{-D}^{\Gamma })= \dim \{c_1h_1+\cdots +c_nh_n\mid c_1,\ldots ,c_n\in {\mathbb{C}}\}\\
&\quad =\dim \mathfrak{gl }_1^n-\dim \{(c_1,\ldots ,c_n)\in \mathfrak{gl}_1^n\mid c_1h_1+\cdots +c_nh_n=0\}\\
&\quad =\dim \mathfrak{gl }_1^n-\dim \left \{(c_1,\ldots ,c_n)\in \mathfrak{gl}_1^n\mid D\cdot \Gamma \cdot {}^t\begin{pmatrix}c_1&\cdots &c_n\end{pmatrix} ={}^t\begin{pmatrix}0&\cdots &0\end{pmatrix}\right \}\\
&\quad =n-(n-\rank D)\qquad \text{(note that $\Gamma \in \mathrm {M}(n,n;{\mathbb{C}})$ is invertible)}\\
&\quad =\rank D.
\end{align*}
Thus, we have our claims.
\end{proof}
It is easy to show that the same claims in Proposition \ref{pr;propertiesbox} and in Corollary \ref{col1} hold on the representation $\Box_{-D}^{\Gamma }$ instead of $\Box_D^{\Gamma }$.
\begin{remark}
From Proposition \ref{pr;kerimphi}, we have that $(\Phi (r,n;A,D,\Gamma )({\mathbb{C}}_D^{\Gamma }\otimes {\mathbb{C}}_{-D}^{\Gamma }))^{\perp }=\Ann {\mathbb{C}}_D^{\Gamma }$.
Thus, we have an equation that $\dim \Phi (r,n;A,D,\Gamma )({\mathbb{C}}_D^{\Gamma }\otimes {\mathbb{C}}_{-D}^{\Gamma })+\dim \Ann {\mathbb{C}}_D^{\Gamma }=\dim \mathfrak{h}^r$.
It gives another proof of the claim {\rm (iv)} or {\rm (v)} in Corollary \ref{col1} using each other.
\end{remark}

\section {Contragredient Lie algebras}
Using some results we have obtained in the previous section, let us study the structure of Lie algebras constructed with a pentad of Cartan type.
In particular, we shall mainly consider the cases where pentads of Cartan type are regular.
\subsection {Some notion and results due to V. Kac}
To describe the structure of the Lie algebra associated with a pentad of Cartan type, we need to recall some notion and results due to V. Kac in \cite {ka-1} on graded Lie algebras.
\begin{defn}[\text{\cite [p.1276, Definition 6]{ka-1}}]\label{defn;maxmin}
A graded Lie algebra $G=\bigoplus G_i$ with local part $\hat {G}=G_{-1}\oplus G_0\oplus G_1$ is said to be maximal [resp., minimal] if for any other graded Lie algebra $G^{\prime }$, every isomorphism of the local parts of $G$ and $G^{\prime }$ can be extended to an epimorphism of $G$ onto $G^{\prime }$ [of $G^{\prime }$ onto $G$].
\end{defn}
\begin{pr}[\text{\cite [p.1276, Proposition 4]{ka-1}}]\label{pr;existmaxmin}
Let $\hat {G}=G_{-1}\oplus G_0\oplus G_1$ be a local Lie algebra. There exist maximal and minimal
graded Lie algebras whose local parts are isomorphic to $\hat {G}$.
\end{pr}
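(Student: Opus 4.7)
The plan is to give the construction of both the maximal and the minimal graded Lie algebra and then verify the universal properties from Definition \ref{defn;maxmin}. I would build $G^{\max}$ as a universal object and obtain $G^{\min}$ as a distinguished quotient of $G^{\max}$.

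First I would build the maximal one. Let $F$ be the free Lie algebra on the vector space $\hat{G}=G_{-1}\oplus G_0\oplus G_1$, graded by declaring that elements of $G_i$ carry degree $i$. Let $R\subset F$ be the graded ideal generated by the differences $[x,y]_F-[x,y]_{\hat{G}}$ for $x\in G_i$, $y\in G_j$ with $|i|,|j|\le 1$ and $|i+j|\le 1$ (so that $[x,y]_{\hat{G}}$ is defined in the local part). Set $G^{\max}:=F/R$ and equip it with the induced $\mathbb{Z}$-gradation. By construction, the canonical linear map $\hat{G}\to G^{\max}$ is a morphism of local Lie algebras, and $G^{\max}$ is generated by the image of $\hat{G}$. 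Given any graded Lie algebra $G'$ and an isomorphism of local parts $\hat{G}\xrightarrow{\sim}\hat{G'}$, the universal property of the free Lie algebra extends this to $F\to G'$, and the defining relations of $R$ lie in the kernel, so one obtains a graded epimorphism $G^{\max}\twoheadrightarrow G'$. This gives maximality.

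The delicate point, and the main obstacle, is to verify that the natural map $\hat{G}\to G^{\max}$ is actually injective on the local part, i.e.\ that imposing the relations $R$ does not collapse $G_{-1}\oplus G_0\oplus G_1$. The way I would handle this is to exhibit at least one graded Lie algebra $G'$ whose local part is isomorphic to $\hat{G}$: then the composition $\hat{G}\to G^{\max}\twoheadrightarrow G'$ is the given isomorphism on local parts, forcing $\hat{G}\hookrightarrow G^{\max}$. Such a $G'$ can be produced, for instance, by the standard inductive realization on $\bigoplus_{n\ge 2}\mathrm{Hom}(G_{-1},G_{n-1})$ and its negative analogue, truncating if necessary, or by appealing to a known construction in the literature (e.g.\ via universal enveloping algebras and a PBW-type argument). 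Once this embedding is established, $G^{\max}$ has local part canonically identified with $\hat{G}$ and the maximality statement is complete.

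For the minimal object, I would let $\mathcal{J}$ denote the family of graded ideals $J$ of $G^{\max}$ satisfying $J\cap\hat{G}=\{0\}$. Since the sum of any family of such ideals still meets $\hat{G}$ trivially (each graded piece in degree $|i|\le 1$ is a sum of zeros), there is a unique largest member $J_{\max}\in\mathcal{J}$. Define $G^{\min}:=G^{\max}/J_{\max}$ with the induced gradation; its local part is still isomorphic to $\hat{G}$. To verify the defining property, take any graded Lie algebra $G'$ with local part isomorphic to $\hat{G}$ and use maximality of $G^{\max}$ to obtain a graded epimorphism $\pi\colon G^{\max}\twoheadrightarrow G'$ that restricts to an isomorphism on local parts; in particular $\ker\pi\in\mathcal{J}$, so $\ker\pi\subseteq J_{\max}$, which yields a factorization $G'\twoheadrightarrow G^{\max}/J_{\max}=G^{\min}$ extending the given isomorphism of local parts. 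This is exactly the condition in Definition \ref{defn;maxmin} for the minimal object.

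Apart from the embedding $\hat{G}\hookrightarrow G^{\max}$ discussed above, the remaining verifications are formal: checking that $R$ and $J_{\max}$ are genuinely graded, that sums of ideals in $\mathcal{J}$ stay in $\mathcal{J}$, and that the universal maps respect the gradation. I would keep those as routine and focus the written proof on the construction of $G^{\max}$ and on producing at least one concrete realization of $\hat{G}$ as the local part of a graded Lie algebra, since that is what makes the whole argument nontrivial.
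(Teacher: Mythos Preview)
The paper does not supply its own proof of this proposition; it is stated purely as a citation of \cite[p.1276, Proposition 4]{ka-1} and used as a black box. Your sketch is essentially the construction Kac himself gives there: build $G^{\max}$ as the free Lie algebra on $\hat G$ modulo the local relations, then obtain $G^{\min}$ as the quotient by the unique maximal graded ideal meeting $\hat G$ trivially. The argument is correct as written, including your identification of the one genuinely nontrivial step, namely the injectivity of $\hat G\to G^{\max}$; your proposed workaround (produce any one graded Lie algebra with local part $\hat G$ and use the universal map back) is valid, though in Kac's original the injectivity is handled more directly via a PBW-type argument showing that the subalgebras generated by $G_1$ and by $G_{-1}$ are free.
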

\begin{defn}[\text{\cite [p.1279]{ka-1}}]\label{defn;contra}
Let $A=(A_{ij})$, $i,j=1,2,\ldots ,n$, be a matrix with elements from the field ${\mathbb{C}}$.
Let $G_{-1}$, $G_1$, $G_0$ be vector spaces over ${\mathbb{C}}$ with bases $\{f_i\}$, $\{e_i\}$, $\{h_i\}$, respectively $(i=1,2,\ldots ,n)$.
We call the minimal graded Lie algebra $G(A)=\bigoplus G_i$ with local part $\hat {G}(A):=G_{-1}\oplus G_0\oplus G_1$, where the structure of $\hat {G}(A)$ is defined by:
\begin{align}
&[e_i,f_j]=\delta _{ij}h_i,&&[h_i,h_j]=0,&&[h_i,e_j]=A_{ij}e_j,&&[h_i,f_j]=-A_{ij}f_j,&
\end{align}
a contragredient Lie algebra, and the matrix $A$ its Cartan matrix.
\end{defn}
\begin{lemma}[\text{\cite [p.1280, Lemma 1]{ka-1}}]\label {lem;kaclem}
The center $Z$ of the Lie algebra $G(A)$ consists of elements of the form $\sum _{i=1}^na_ih_i$, where $\sum _{i=1}^nA_{ij}a_i=0$.
If the matrix $A$ contains no row consisting zeros alone, then the factor algebra $G^{\prime }(A)=G(A)/Z(A)$, with the induced gradation, is transitive.
\end{lemma}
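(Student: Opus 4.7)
The plan is to prove both claims by combining the graded structure of $G(A) = \bigoplus_{n \in \mathbb{Z}} G_n$ with the defining minimality property of $G(A)$, which forces transitivity in degrees $|n| \geq 2$. Write $Z := Z(A)$ throughout.

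For the description of $Z$, the inclusion $\supseteq$ is a direct calculation on local generators: any $z = \sum_{i} a_i h_i$ satisfying $\sum_i a_i A_{ij} = 0$ for every $j$ has $[z, h_j] = 0$, $[z, e_j] = \left(\sum_i a_i A_{ij}\right) e_j = 0$, and $[z, f_j] = -\left(\sum_i a_i A_{ij}\right) f_j = 0$, hence commutes with the entire local part, hence with all of $G(A)$. For the reverse inclusion, decompose a central element as $z = \sum_n z_n$; since the center is graded, each homogeneous component is itself central. For $|n|\geq 2$, centrality of $z_n$ implies in particular that $[z_n, G_{-1}] = 0$ (when $n \geq 2$) or $[z_n, G_1] = 0$ (when $n \leq -2$), and I will argue below that the minimality of $G(A)$ then forces $z_n = 0$. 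For $n = \pm 1$, writing $z_1 = \sum_j c_j e_j$, the brackets $[z_1, f_k] = c_k h_k$ must vanish in $G(A)$, and since $\{h_k\}$ is a basis of $G_0$ this forces every $c_k = 0$; the case $n = -1$ is symmetric. For $n = 0$, the direct calculation above shows that centrality is exactly equivalent to the stated linear conditions on the $a_i$.

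For the transitivity of $G'(A) = G(A)/Z$ under the assumption that no row of $A$ is zero, I will verify the two conditions of Definition~\ref{defn;tran} degree by degree on representatives in $G(A)$. In degrees $|n|\geq 2$ the conclusion is inherited from $G(A)$ by the minimality argument. In degree $0$, if $x = \sum_i a_i h_i$ satisfies $[x, G_{-1}] = 0$ (or $[x, G_1] = 0$), the computation $-\sum_i a_i A_{ij} f_j = 0$ (resp.\ $\sum_i a_i A_{ij} e_j = 0$) is exactly the condition $x \in Z$, so its image in $G'(A)$ vanishes. In degree $\pm 1$ the hypothesis on $A$ enters: if $z_1 = \sum_j c_j e_j$ has $[z_1, G_{-1}] \subseteq Z$, then $c_k h_k = [z_1, f_k] \in Z$, and by the first part of the lemma this means $c_k A_{kj} = 0$ for every $j$; since row $k$ of $A$ is not identically zero, each $c_k = 0$, and the argument for $G_{-1}$ is symmetric.

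The main obstacle is the invocation of minimality in degrees $|n| \geq 2$, namely: if $z_n \in G_n$ with $n \geq 2$ satisfies $[z_n, G_{-1}] = 0$, then the two-sided ideal $I$ generated by $z_n$ is contained in $\bigoplus_{k \geq 2} G_k$ and so intersects the local part trivially, so by Proposition~\ref{pr;existmaxmin} and the definition of the minimal graded Lie algebra (Definition~\ref{defn;maxmin}) it must be zero, forcing $z_n = 0$. The containment of $I$ in positive degrees $\geq 2$ is checked by induction using the Jacobi identity: for any $y \in G_{-1}$ and $x$ in a higher-degree component of $I$ of the form $x = [w, z_n]$ with $w \in G_k$ ($k \geq 0$), one has $[y, [w, z_n]] = [[y, w], z_n] + [w, [y, z_n]]$, and the base case $[y, z_n] = 0$ propagates to keep every bracket in degrees $\geq n$. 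The symmetric argument disposes of the case $n \leq -2$. Together with the two local-degree computations above, this completes both halves of the lemma, with the role of the hypothesis ``no row of zeros'' being precisely to rule out the family of elements $h_k$ that would otherwise lie in $Z$ and spoil transitivity of $G'(A)$ in degree $\pm 1$.
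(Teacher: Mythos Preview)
The paper does not give its own proof of this lemma; it is quoted verbatim from Kac \cite[p.~1280, Lemma~1]{ka-1} and used as a black box. So there is nothing in the paper to compare your argument against, and I will simply assess the argument on its own.

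Your overall strategy is correct and standard: the center is graded, the degree-$0$ and degree-$\pm 1$ pieces are handled by explicit computation on the generators, and the pieces in degree $|n|\geq 2$ (as well as transitivity of $G'(A)$ in those degrees) come from minimality. The degree-$0$ and degree-$\pm 1$ computations are fine, and your use of the ``no zero row'' hypothesis in degree $\pm 1$ is exactly right.

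The one place that needs tightening is the minimality step. You claim that the two-sided ideal $I$ generated by $z_n$ (with $n\geq 2$ and $[z_n,G_{-1}]=0$) lies in $\bigoplus_{k\geq 2}G_k$, but your inductive check only treats elements of the special form $[w,z_n]$ with $w\in G_k$, $k\geq 0$; elements of $I$ are general iterated brackets $[g_1,[g_2,\dots,[g_m,z_n]\dots]]$ with $g_i\in G_{-1}\cup G_0\cup G_1$, and your Jacobi computation as written does not cover those. The statement is nevertheless true. One clean fix: set $M_m=0$ for $m<n$, let $M_n$ be the $G_0$-submodule of $G_n$ generated by $z_n$, and $M_{m+1}=[G_1,M_m]$ for $m\geq n$; a short Jacobi induction shows $[G_0,M_m]\subseteq M_m$ and $[G_{-1},M_m]\subseteq M_{m-1}$ (the base case being $[G_{-1},M_n]=0$), so $\bigoplus_m M_m$ is an ideal containing $z_n$ and supported in degrees $\geq n$. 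Then you still need the (easy but unstated) fact that a minimal graded Lie algebra has no nonzero graded ideal meeting the local part trivially: if $I$ were such an ideal, the projection $G(A)\to G(A)/I$ followed by the epimorphism $G(A)/I\to G(A)$ furnished by minimality is the identity on the local part, hence the identity, forcing $I=0$. With these two points made explicit, your proof is complete. (For the center itself the minimality argument is actually overkill: a central $z_n$ generates the ideal $\mathbb{C}z_n\subseteq G_n$, which trivially misses the local part.)
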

In particular, if $A$ is invertible, then a contragredient Lie algebra $G(A)$ is transitive.
Under these notion and notations, V. Kac proved the following important results on graded Lie algebras.
\begin{pr}[\text{\cite [p.1278, Proposition 5]{ka-1}}]\label{pr;mintra}
\begin{itemize}
\item [a)]{A transitive graded Lie algebra is minimal.}
\item [b)]{A minimal graded Lie algebra with a transitive local part is transitive.}
\item [c)]{Two transitive graded Lie algebras are isomorphic if and only if their local parts are isomorphic.}
\end{itemize}
\end{pr}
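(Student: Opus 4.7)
The plan is to work with the maximal graded Lie algebra $\widetilde{G}$ having local part $\widehat{G}$ (which exists by Proposition \ref{pr;existmaxmin}) and to establish first the following characterization, which then makes all three parts routine: a graded Lie algebra $G$ with local part $\widehat{G}$ is minimal if and only if $G$ contains no nonzero graded ideal disjoint from $\widehat{G}$. The ``only if'' direction: any such ideal $I$ would make $G\to G/I$ a proper surjection between graded Lie algebras with the same local part, and minimality would produce a surjection $G/I\to G$ whose composition with $G\to G/I$ is the identity on $\widehat{G}$, hence the identity on $G$, forcing $I=0$. The ``if'' direction: write any other $G'$ with the same local part as $\widetilde{G}/K'$ and write $G=\widetilde{G}/K$, where $K,K'$ are graded ideals disjoint from $\widehat{\widetilde{G}}$; a degree-wise check shows $K+K'$ is also disjoint from $\widehat{\widetilde{G}}$, so $(K+K')/K$ is a graded ideal of $G$ disjoint from $\widehat{G}$, thus zero by hypothesis, giving $K'\subset K$ and the desired surjection $G'\to G$.

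Given this, part a) is immediate. If $G$ is transitive and $I\subset G$ is a nonzero graded ideal with $I\cap\widehat{G}=0$, pick a homogeneous $x\in I\cap G_n$ with $|n|\geq 2$ minimal. For $n\geq 2$, the bracket $[x,G_{-1}]\subset I_{n-1}$ vanishes (either $n-1=1$ and $I_1\subset I\cap\widehat{G}=0$, or $|n-1|<n$ by choice of $n$), so transitivity forces $x=0$; the case $n\leq -2$ is symmetric.

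For part b), assume $G$ is minimal and $\widehat{G}$ transitive, and suppose for contradiction that there is a nonzero homogeneous $x\in G_n$ with $n\geq 2$ and $[x,G_{-1}]=0$ (the case $n\leq -2$ is symmetric). The key technical step is to show that the ideal $(x)\subset G$ lies in $\bigoplus_{k\geq n}G_k$, so that $(x)\cap\widehat{G}=0$, contradicting minimality by the characterization above. I would first use that $G_{-k}$ is spanned by iterated brackets of $G_{-1}$-elements (since $\widehat{G}$ generates $G$) together with Jacobi to deduce $[x,G_{-k}]=0$ for all $k\geq 1$. Any element of $(x)$ is a linear combination of $\ad(y_1)\cdots\ad(y_p)x$ with each $y_i\in G_{-1}\cup G_0\cup G_1$, and I would induct on the number of $y_i$ of degree $-1$: using Jacobi to commute each such $y_i$ inward through the other $y_j$'s (introducing lower-order correction terms already handled by the induction) until it reaches $x$ and annihilates. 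This shows the total degree $n+\sum\deg y_i$ of any surviving term never drops below $n$. The remaining transitivity claims at degrees $0,\pm 1$ are exactly the hypothesis that $\widehat{G}$ is transitive.

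Part c) in one direction is trivial. Conversely, given transitive $G,G'$ with an isomorphism $\alpha:\widehat{G}\to\widehat{G'}$, part a) produces surjections $\gamma:G\to G'$ extending $\alpha$ and $\beta:G'\to G$ extending $\alpha^{-1}$; then $\beta\circ\gamma$ is a graded endomorphism of $G$ restricting to $\mathrm{id}_{\widehat{G}}$, and since $G$ is generated by $\widehat{G}$ and endomorphisms preserve brackets, $\beta\circ\gamma=\mathrm{id}_G$, with the symmetric identity giving $\gamma\circ\beta=\mathrm{id}_{G'}$. The main obstacle throughout is the technical step in part b): organizing the Jacobi bookkeeping cleanly enough to verify that an obstruction element $x$ cannot, by repeated bracketing, produce a term in the local part.
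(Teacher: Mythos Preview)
The paper does not supply its own proof of this proposition; it is quoted from Kac \cite[p.~1278, Proposition 5]{ka-1} and used as a black box. So there is nothing in the paper to compare against, and your argument stands on its own.

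Your proof is essentially correct and follows the standard line (which is also Kac's). The characterization ``$G$ is minimal iff it has no nonzero graded ideal missing $\widehat G$'' is exactly the right pivot, and both directions are argued correctly --- in particular, the degree-wise check that $K+K'$ still misses the local part is the point that makes the ``if'' direction work. Parts a) and c) then go through as you say.

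For part b), two small technical points are worth tightening. First, your preliminary claim that $G_{-k}$ is spanned by iterated brackets of $G_{-1}$-elements is true but not as immediate from ``$\widehat G$ generates $G$'' as you suggest; fortunately it is also unnecessary for the rest of your argument and can be dropped. Second, the induction is not purely on the number of degree-$(-1)$ factors: when you commute the rightmost such factor $y_j$ past a neighbour $y_{j+1}\in G_0$, the correction $[y_j,y_{j+1}]$ lands back in $G_{-1}$, so the count does not decrease. What does decrease is the total length $p$ (equivalently, the distance of the rightmost degree-$(-1)$ factor from $x$), so you want a lexicographic double induction. You should also say explicitly that you always move the \emph{rightmost} degree-$(-1)$ factor, which guarantees $y_{j+1}\in G_0\cup G_1$ and prevents the correction term from falling into $G_{-2}$ and leaving $\widehat G$. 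With those clarifications the argument is complete.
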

\par
Here, let us recall the definition of Kac-Moody Lie algebras in \cite {ka-2}.
In this paragraph, we use notations in \cite {ka-2}.
Let $A=(a_{ij})_{i,j=1}^n$ be an invertible generalized Kac-Moody matrix and $(\h, \Pi ,\Pi ^{\vee })$, where $\dim \h=n$, $\Pi =\{\alpha _1,\ldots ,\alpha _n\}\subset \h ^*$ and $\Pi ^{\vee }=\{\alpha _1^{\vee},\ldots ,\alpha _n^{\vee }\}\subset \h$, be its realization.
Then, summarizing \cite [\S 1.5, in particular Remark 1.5]{ka-2}, we can construct the Kac-Moody Lie algebra $\g (A)=[\g (A),\g(A)]=\g ^{\prime }(A)$ (from the assumption that $A$ is invertible, see \cite [\S 1.3]{ka-2}) as follows:
\begin{itemize}
\item {There exists a $Q(=\Z\alpha _1+\cdots +\Z\alpha _n)$-graded Lie algebra $\tilde {\g}^{\prime }(A)=\bigoplus _{\alpha }\tilde {\g}^{\prime } _{\alpha }$ on the generators $e_i$, $f_i$, $\alpha _i^{\vee }$ $(i=1,\ldots ,n, \deg e_i=\alpha _{i}=-\deg f_i, \deg \alpha _i^{\vee }=0)$ and defining relations
\begin{align*}
&[e_i,f_j]=\delta _{ij}\alpha _i^{\vee },\quad [\alpha _i^{\vee },\alpha _j^{\vee }]=0,\quad [\alpha _i^{\vee },e_j]=a_{ij}e_j,\quad [\alpha _i^{\vee },f_j]=-a_{ij}f_j,&\\
\end{align*}}
\item {There exists a unique maximal $Q$-graded ideal $\mathfrak{r}\subset \tilde {\g}^{\prime }(A)$ intersecting $\tilde {\g }^{\prime }_0=\sum _i\C \alpha _i^{\vee }=\h$ trivially. Then $\g (A)=\g ^{\prime }(A)=\tilde {\g}^{\prime }(A)/\mathfrak{r}$}.
\end{itemize}
We can take suitable subspaces $\tilde {\g}^{\prime }_j(\mathbf {1})\subset \tilde {\g }^{\prime }(A)$ $(j\in \Z)$ such that
\begin{align*}
\tilde {\g}^{\prime }(A)=\bigoplus _{j\in\Z}\tilde {\g}^{\prime }_j(\mathbf {1})\ (\text {$\Z$-gradation}),\quad \tilde {\g}^{\prime }_0(\mathbf {1})=\h,\quad \tilde {\g}^{\prime }_{-1}(\mathbf {1})=\sum _i\C f_i,\quad \tilde {\g}^{\prime }_1(\mathbf {1})=\sum _i\C e_i,
\end{align*}
the gradation of type $\mathbf{1}=(1,\ldots ,1)$, in the term of \cite {ka-2}.
The $Q$-graded ideal $\mathfrak{r}$ clearly intersects $\tilde {\g}^{\prime }_{-1}(\mathbf {1})\oplus \tilde {\g}^{\prime }_0(\mathbf {1})\oplus \tilde {\g}^{\prime }_1(\mathbf {1})$ trivially.
Thus, from the maximality of $\mathfrak{r}$, we have that $\g (A)=\tilde {\g}^{\prime }(A)/\mathfrak{r}=\bigoplus _{j\in\Z}\tilde {\g}^{\prime }_j(\mathbf {1})/\mathfrak{r}$ with induced $\Z$-gradation is minimal in the sense of Definition \ref {defn;maxmin}.
That is, the Kac-Moody Lie algebra $\g(A)$, whose Cartan matrix $A$ is invertible, is isomorphic to the contragredient Lie algebra with Cartan matrix $A$.
Here, in particular cases where $A$ is symmetrizable, the ideal $\mathfrak{r}$ is generated by elements
$$
(\ad e_i)^{1-a_{ij}}e_j,\quad (\ad f_i)^{1-a_{ij}}f_j,\quad i\neq j, \quad (i,j=1,\ldots ,n)
$$
(see \cite [Theorem 9.11]{ka-2} or \cite [Theorem 2]{ka-3}).

\subsection {Lie algebras associated with a pentad of Cartan type}
Let us study the structure of Lie algebras associated with a pentad of Cartan type.
For this, we shall start with giving the notation to describe such Lie algebras.
\begin{defn}
Let $P(r,n;A,D,\Gamma )$ be a pentad of Cartan type.
We denote the Lie algebra associated with $P(r,n;A,D,\Gamma )$ by $L(r,n;A,D,\Gamma )$.
We call a Lie algebra of the form $L(r,n;A,D,\Gamma )$ {\it a Lie algebra associated with a pentad of Cartan type}, or shortly, {\it PC Lie algebra}.
Moreover, when $P(r,n;A,D,\Gamma )$ is a regular pentad of Cartan type, we say that $L(r,n;A,D,\Gamma )$ is a {\it regular PC Lie algebra}.
\end{defn}
From Propositions \ref {pr;transitive} and \ref {pr;propertiesbox}, we have the following claim on the structure of $L(r,n;A,D,\Gamma )$.
\begin{pr}\label{pr;transitive_cartan}
Let $P(r,n;A,D,\Gamma )$ be a pentad of Cartan type.
The corresponding graded Lie algebra $L(r,n;A,D,\Gamma )$ is transitive if and only if the $(r\times n)$-matrix $D$ has rank $r$ and has no zero-column vectors.
In particular, when $r=n$, $L(r,n;A,D,\Gamma )=L(r,r;A,D,\Gamma )=L(n,n;A,D,\Gamma )$ is transitive if and only if a square matrix $D \in \mathrm {M}(r,r;{\mathbb{C}})=\mathrm {M}(n,n;{\mathbb{C}})$ is invertible.
\end{pr}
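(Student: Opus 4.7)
The plan is to deduce this directly from Proposition \ref{pr;transitive}, since $\mathbb{C}_D^{\Gamma}$ and $\mathbb{C}_{-D}^{\Gamma}$ are finite-dimensional by construction (each has dimension $n$). Under that hypothesis, transitivity of $L(r,n;A,D,\Gamma)$ is equivalent to the representations $\Box_D^r$ and $\Box_{-D}^r$ being simultaneously faithful and surjective. Thus the problem reduces to translating these two conditions into matrix-theoretic conditions on $D$.

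First, I would handle faithfulness. By Corollary \ref{col1} (iv), the representation $\Box_D^r$ is faithful iff $\rank D = r$. The remark following the corollary states that the analogous claim holds for $\Box_{-D}^r$; indeed the same proof applies since $\Box_{-D}^r(\epsilon_i \otimes f_j) = -d_{ij}f_j$ produces the same annihilator ideal as $\Box_D^r$. So the joint faithfulness of $\Box_D^r$ and $\Box_{-D}^r$ is equivalent to $\rank D = r$.

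Next, surjectivity: by Proposition \ref{pr;propertiesbox} (ii), $\Box_D^r$ is surjective iff $D$ has no zero-column vector, and the analogous statement for $\Box_{-D}^r$ gives the same condition (a column of $D$ is zero iff the corresponding column of $-D$ is zero). Combining the two conditions yields that $L(r,n;A,D,\Gamma)$ is transitive iff $\rank D = r$ and $D$ has no zero-column vector, which is the first assertion.

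Finally, for the specialization $r=n$ I would observe that $D \in \mathrm{M}(r,r;\mathbb{C})$ is square, so $\rank D = r$ is equivalent to $D$ being invertible; in that case $D$ automatically has no zero column (an invertible square matrix cannot have a zero column). Conversely, invertibility gives both required conditions. This yields the second assertion. The main (and only) step requiring care is invoking the ``$-D$-analogue'' of Proposition \ref{pr;propertiesbox} and Corollary \ref{col1}, which the paper has already noted holds by a parallel argument, so no serious obstacle is expected.
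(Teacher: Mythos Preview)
Your proposal is correct and follows essentially the same approach as the paper, which simply cites Propositions~\ref{pr;transitive} and~\ref{pr;propertiesbox} (together with Corollary~\ref{col1}) without further elaboration. Your write-up spells out the details the paper leaves implicit, including the $-D$ analogue and the $r=n$ specialization.
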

\begin{remark}
In particular, if $r=\dim \mathfrak{h}^r>\dim {\mathbb{C}}_D^{\Gamma }=n$, then $L(r,n;A,D,\Gamma )$ is not transitive.
\end{remark}
The following theorem is to find the structure of a regular PC Lie algebra.
\begin{theo}\label{theo;1}
Let $r\geq n\geq 1$ be positive integers and $P(r,n;A,D,\Gamma )$ be a regular pentad of Cartan type, i.e. its Cartan matrix $C(A,D,\Gamma )=\Gamma \cdot {}^tD\cdot A\cdot D$ is invertible.
Then the corresponding PC Lie algebra $L(r,n;A,D,\Gamma )$ associated with the pentad $P(r,n;A,D,\Gamma )$ is the direct sum of $(r-n)$-dimensional center and a contragredient Lie algebra whose Cartan matrix is $C(A,D,\Gamma )$:
$$
L(r,n;A,D,\Gamma )\simeq \mathfrak{gl}_1^{r-n}\oplus G(C(A,D,\Gamma )).
$$
In particular, if $r=n$, then $L(r,r;A,D,\Gamma )$ is isomorphic to $G(C(A,D,\Gamma ))$.
\end{theo}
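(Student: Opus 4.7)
The plan is to exhibit $L(r,n;A,D,\Gamma)=\bigoplus_{k\in\Z}V_k$ as an internal direct sum of Lie algebras $Z\oplus L'$, where $Z$ is a central abelian subalgebra of dimension $r-n$ and $L'$ is a graded Lie subalgebra whose local part has exactly the defining relations of the contragredient Lie algebra $G(C(A,D,\Gamma))$, and then identify $L'$ with $G(C)$ via V.\,Kac's minimality/transitivity machinery.

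First I would split $V_0=\mathfrak{h}^r$ as follows. Since $C=\Gamma\cdot{}^tD\cdot A\cdot D$ is invertible we have $\rank D=n$, so by Corollary \ref{col1}(v) the elements $h_1,\ldots,h_n$ are linearly independent; set $\mathfrak{h}':=\Span(h_1,\ldots,h_n)$. Let $Z:=\Ann\mathbb{C}_D^{\Gamma}\subset\mathfrak{h}^r$, which has dimension $r-n$ by Corollary \ref{col1}(iv). If $\sum c_ih_i\in Z$ then Proposition \ref{pr;cartanmat} gives $\sum_i c_iC_{ij}=0$ for every $j$, and invertibility of $C$ forces all $c_i=0$; hence $\mathfrak{h}^r=Z\oplus\mathfrak{h}'$. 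Elements of $Z$ annihilate $V_1$ by definition, and annihilate $V_{-1}$ by the formula for $\Box_{-D}^r$ in Definition \ref{defn;hr4}; being in the commutative $\mathfrak{h}^r$ they also commute with $V_0$. Since $L(r,n;A,D,\Gamma)$ is generated by its local part, $Z$ lies in the center.

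Next I would set $L':=\mathfrak{h}'\oplus\bigoplus_{k\neq 0}V_k$ and show that it is a graded Lie subalgebra. The only nontrivial check is that $[V_k,V_{-k}]\subset\mathfrak{h}'$ for every $k\geq 1$. I would prove this by induction on $k$: the case $k=1$ is immediate from $[e_i,f_j]=\delta_{ij}h_i$, and for the inductive step I would use $V_{k+1}=[V_1,V_k]$ together with the Jacobi identity to obtain
\[
[V_{k+1},V_{-(k+1)}]=[[V_1,V_k],V_{-(k+1)}]\subset[V_1,V_{-1}]+[V_k,V_{-k}],
\]
which lies in $\mathfrak{h}'$ by the induction hypothesis. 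Combined with the centrality of $Z$ and $\mathfrak{h}^r=Z\oplus\mathfrak{h}'$, this yields the Lie algebra decomposition $L(r,n;A,D,\Gamma)\simeq Z\oplus L'\simeq\mathfrak{gl}_1^{r-n}\oplus L'$.

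Finally I would identify $L'$ with $G(C)$. By Proposition \ref{pr;cartanmat} the local relations on $V_{-1}\oplus\mathfrak{h}'\oplus V_1$ coincide with those defining $\widehat{G}(C)$ in Definition \ref{defn;contra}. Since $C$ is invertible, Lemma \ref{lem;kaclem} gives $Z(G(C))=0$, so $G(C)$ is transitive. I claim $L'$ is also transitive: for $|k|\geq 2$ this is inherited from $L(r,n;A,D,\Gamma)$ (which satisfies conditions (iv) and (v) of Theorem \ref{th;univ_stap} by construction); for $k=0$ it follows because $\sum c_ih_i$ acts on $e_j$ by $\sum_i c_iC_{ij}$ and $C$ is invertible; and for $k=\pm 1$ it follows from $[e_i,f_j]=\delta_{ij}h_i$ together with the linear independence of $h_1,\ldots,h_n$. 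Proposition \ref{pr;mintra}(c) then yields $L'\simeq G(C)$, completing the proof. The main obstacle I anticipate is the closure statement $[V_k,V_{-k}]\subset\mathfrak{h}'$ for all $k\geq 1$ in Step 3: without it the candidate subalgebra $L'$ would not be closed, and the clean splitting off of the center $Z$ would fail. It is the only place in the argument where the generation property $V_{k+1}=[V_1,V_k]$ of the pentad-associated Lie algebra is really needed.
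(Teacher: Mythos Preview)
Your proof is correct and follows the same overall strategy as the paper: split off the central piece $Z=\Ann\mathbb{C}_D^{\Gamma}$ from $\mathfrak{h}^r$, show $L(r,n;A,D,\Gamma)=Z\oplus L'$, and identify $L'$ with $G(C(A,D,\Gamma))$ via Kac's transitivity/minimality criterion (Proposition~\ref{pr;mintra}(c)). The only difference in execution is that the paper, rather than verifying transitivity of $L'$ by hand, first recognises $L'$ as the PC Lie algebra $L\bigl(n,n;{}^t(C\Gamma)^{-1},C,\Gamma\bigr)$ via Theorem~\ref{th;univ_stap} and then invokes Proposition~\ref{pr;transitive_cartan}; your explicit inductive check that $[V_k,V_{-k}]\subset\mathfrak{h}'$ makes precise a closure step the paper leaves implicit.
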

\begin{proof}
Put $C(A,D,\Gamma )=(C_{ij})_{ij}\in \mathrm {M}(n,n;\mathbb{C})$.
Note that we have an equation $\rank D=n$ from the assumptions of this claim.
Let $\left (\mathfrak{h}^{\prime }\right )^r$ be a subalgebra of $\mathfrak{h}^r$ which is spanned by $\{h_1,\ldots ,h_n\}$.
This space $\left (\mathfrak{h}^{\prime }\right )^r$ is the image of $\Phi (r,n;A,D,\Gamma )$.
From Corollary \ref{col1}, the set $\{h_1,\ldots ,h_n\}$ is a basis of the $\mathbb{C}$-vector space $\left (\mathfrak{h}^{\prime }\right )^r$.
Moreover, from Proposition \ref {pr;cartanmat}, we have that the restriction of $B_A$ to $\left (\mathfrak{h}^{\prime }\right )^r$ is non-degenerate.
Thus, from Proposition \ref{pr;kerimphi}, the Lie algebra $\mathfrak{h}^r$ can be decomposed into a direct sum of the annihilator of $\Box_D^{\Gamma }$ and $\left (\mathfrak{h}^{\prime }\right )^r$:
$$
\mathfrak{h}^r=\Ann \Box _D^{\Gamma }\oplus \left (\mathfrak{h}^{\prime }\right )^r.
$$
Since $\left (\mathfrak{h}^{\prime }\right )^r$ is $n$-dimensional, the Lie algebra $L(r,n;A,D,\Gamma )$ is the direct sum of its $(r-n)$-dimensional center part and a graded Lie subalgebra $L^{\prime }$, which is spanned by
$$
\{f_1,\ldots ,f_n\}\cup \{h_1,\ldots ,h_n\}\cup \{e_1,\ldots ,e_n\}.
$$
From Theorem \ref{th;univ_stap}, Proposition \ref{pr;cartantype} and the relations
\begin{align}
[h_i,e_j]=C_{ij}e_j,\quad [h_i,f_j]=-C_{ij}f_j,\quad [e_i,f_j]=\delta _{ij}h_i,\quad B_A(h_i,h_j)=\gamma _jC_{ij},\quad \langle e_i,f_j\rangle _D^{\Gamma }=\delta _{ij}\gamma _i,\label{eq;th1}
\end{align}
we have an isomorphism of graded Lie algebras:
$$
L^{\prime }\simeq L\left (n,n;{}^t\left ( C(A,D,\Gamma )\cdot \Gamma\right )^{-1},C(A,D,\Gamma ),\Gamma \right ).
$$
From Proposition \ref {pr;transitive_cartan} and the assumption that $C(A,D,\Gamma )$ is invertible, we have that the graded Lie algebra $L\left (n,n;{}^t\left ( C(A,D,\Gamma )\cdot \Gamma\right )^{-1},C(A,D,\Gamma ),\Gamma \right )$ is transitive.
Thus, from Lemma \ref {lem;kaclem} and Proposition \ref{pr;mintra} and the equations (\ref {eq;th1}), it is isomorphic to a contragredient Lie algebra whose Cartan matrix is $C(A,D,\Gamma )$.
Summarizing, we have an isomorphism of Lie algebras:
\begin{align*}
&L(r,n;A,D,\Gamma )\simeq \text{($(r-n)$-dimensional center)}\oplus L^{\prime }\simeq \mathfrak{gl}_1^{r-n}\oplus G(C(A,D,\Gamma ))
\end{align*}
up to gradation.
\end{proof}
\begin{ex}
We retain to use the notations in Examples \ref{ex;1} and \ref{ex;2}.
From Propositions \ref{pr;stapequi_lieiso}, \ref{pr;parabolic_empty} and Examples \ref{ex;1}, \ref{ex;2}, we can easily show that the Lie algebras $L(2,2;A,D,\Gamma )$ and $L(2,2;A^{\prime },D^{\prime },\Gamma ^{\prime })$ are isomorphic to $\mathfrak{sl}_3$.
Here, let us try to show the same claim using Theorem \ref{theo;1}.
For this, let us find the Cartan matrices of pentads $P(2,2;A,D,\Gamma )$ and $P(2,2;A^{\prime },D^{\prime },\Gamma ^{\prime })$.
By a direct calculation, we have
\begin{align*}
&\Gamma \cdot {}^t D\cdot A\cdot D=\begin{pmatrix}1&0\\0&1\end{pmatrix}\cdot \begin{pmatrix}2&-1\\-1&2\end{pmatrix}\cdot \frac{1}{3}\begin{pmatrix}2&1\\1&2\end{pmatrix}\cdot \begin{pmatrix}2&-1\\-1&2\end{pmatrix}=\begin{pmatrix}2&-1\\-1&2\end{pmatrix},\\
&\Gamma ^{\prime }\cdot {}^t D^{\prime }\cdot A^{\prime }\cdot D^{\prime }=\begin{pmatrix}1&0\\0&1\end{pmatrix}\cdot \begin{pmatrix}3&4\\0&-5\end{pmatrix}\cdot \frac{1}{75}\begin{pmatrix}14&-3\\-3&6\end{pmatrix}\cdot \begin{pmatrix}3&0\\4&-5\end{pmatrix}=\begin{pmatrix}2&-1\\-1&2\end{pmatrix}.
\end{align*}
Both these matrices coincide with the Cartan matrix of type $A_2$, which is invertible.
Thus, we have that both the Lie algebras $L(2,2;A,D,\Gamma )$ and $L(2,2;A^{\prime },D^{\prime },\Gamma ^{\prime })$ are isomorphic to $\mathfrak{sl}_3$ from Theorem \ref{theo;1}.
\end{ex}
As corollaries of Theorem \ref{theo;1}, we have the following theorems.
\begin{theo}\label {th;contraPC}
A contragredient Lie algebra with an invertible Cartan matrix is isomorphic to some PC Lie algebra.
In particular, a Kac-Moody Lie algebra with an invertible Cartan matrix is isomorphic to some PC Lie algebra.
\end{theo}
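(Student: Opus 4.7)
The plan is to realize the converse of Theorem \ref{theo;1} by a direct construction: given an invertible matrix $C\in \mathrm{M}(n,n;\mathbb{C})$, produce explicit data $(r,n;A,D,\Gamma)$ such that the corresponding regular pentad of Cartan type has Cartan matrix equal to $C$, and then read off the conclusion from Theorem \ref{theo;1}.

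Concretely, I would take $r=n$, $D=I_n$, $\Gamma=I_n$, and $A=C$. One checks immediately that this defines a pentad of Cartan type in the sense of Definition \ref{defn;cartan}: indeed $A=C$ is invertible by hypothesis, and $\Gamma=I_n$ is an invertible diagonal matrix, so the data satisfy the constraints in Definitions \ref{defn;hr4} and \ref{defn;hr3}. The Cartan matrix of this pentad (Definition \ref{defn;carmat}) is
\[
C(A,D,\Gamma)=\Gamma\cdot {}^tD\cdot A\cdot D=I_n\cdot I_n\cdot C\cdot I_n=C,
\]
which is invertible, so $P(n,n;C,I_n,I_n)$ is a regular pentad of Cartan type in the sense of Definition \ref{defn;regularPC}. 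Applying Theorem \ref{theo;1} in the case $r=n$ then yields
\[
L(n,n;C,I_n,I_n)\simeq G(C(C,I_n,I_n))=G(C),
\]
which is exactly the first assertion.

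For the second assertion, I would invoke the review of Kac--Moody theory carried out in the excerpt just before Section 3.2: when the generalized Kac--Moody matrix $A$ is invertible, the ideal $\mathfrak{r}$ used to define $\mathfrak{g}(A)$ is maximal among $Q$-graded ideals trivially intersecting the local part $\tilde{\mathfrak{g}}^{\prime}_{-1}(\mathbf{1})\oplus\tilde{\mathfrak{g}}^{\prime}_{0}(\mathbf{1})\oplus\tilde{\mathfrak{g}}^{\prime}_{1}(\mathbf{1})$, so $\mathfrak{g}(A)$ with its induced $\mathbb{Z}$-gradation is minimal in the sense of Definition \ref{defn;maxmin}; hence $\mathfrak{g}(A)\simeq G(A)$ by Proposition \ref{pr;mintra}. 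Combining this identification with the construction above realizes $\mathfrak{g}(A)$ as the PC Lie algebra $L(n,n;A,I_n,I_n)$.

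There is essentially no analytic obstacle here: the proof is a one-line verification of a matrix identity combined with Theorem \ref{theo;1} and the Kac--Moody/contragredient identification recalled earlier. The only point that needs a moment's care is confirming that the trivial choice $D=I_n$, $\Gamma=I_n$ does satisfy all the nondegeneracy hypotheses (in particular $\mathrm{rank}\,D=n$, which is automatic), so that Theorem \ref{theo;1} is applicable and produces an isomorphism with no center summand when $r=n$.
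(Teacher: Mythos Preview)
Your proof is correct and takes essentially the same approach as the paper: the paper also chooses $r=n$, $A=X$, $D=I_n$, $\Gamma=I_n$, computes $C(X,I_n,I_n)=X$, and invokes Theorem \ref{theo;1}. Your added verification that the data satisfy the hypotheses of Definition \ref{defn;cartan} and your explicit appeal to the Kac--Moody/contragredient identification are welcome elaborations, but the core argument is identical.
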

\begin{proof}
Let $X\in \mathrm {M}(l,l;\mathbb{C})$ be an invertible matrix and $G(X)$ a contragredient Lie algebra whose Cartan matrix is $X$.
Then we have an isomorphism of Lie algebras:
$
G(X)\simeq L(l,l;X,I_l,I_l)
$
from an equation $C(X,I_l,I_l)=I_l\cdot {}^t I_l\cdot X\cdot I_l=X$ and Theorem \ref{theo;1}.
\end{proof}
\begin{theo}\label{theo;deted_by_cartan}
Let $r\geq r^{\prime }\geq 1$ be positive integers.
If pentads of Cartan type $P(r,n;A,D,\Gamma )$ and $P(r^{\prime },n^{\prime };A^{\prime },D^{\prime },\Gamma ^{\prime })$ have equivalent invertible Cartan matrices $C(A,D,\Gamma )\simeq C(A^{\prime },D^{\prime },\Gamma ^{\prime })$, then $L(r^{\prime },n;A^{\prime },D^{\prime },\Gamma ^{\prime })$ is regarded as an ideal of $L(r,n;A,D,\Gamma )$ and have an isomorphism
$$
L(r,n;A,D,\Gamma )\simeq \mathfrak{gl}_1^{r-r^{\prime }}\oplus L(r^{\prime },n^{\prime };A^{\prime },D^{\prime },\Gamma ^{\prime }).
$$
\end{theo}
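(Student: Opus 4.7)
My plan is to reduce the claim to Theorem \ref{theo;1} (the structure theorem for regular PC Lie algebras) together with the fact that equivalence of Cartan matrices implies isomorphism of the associated contragredient Lie algebras.

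First, I would extract what the hypotheses force. By Definition \ref{defn;cartanmat_eq}, the relation $C(A,D,\Gamma)\simeq C(A',D',\Gamma')$ involves a permutation matrix $E_{\pi}$ and an invertible diagonal matrix $\tilde{\Gamma}$, which are necessarily square of the same order as both Cartan matrices; hence $n=n'$. Moreover, since $C(A,D,\Gamma)$ is assumed invertible, Proposition \ref{pr;carmat_noninv} yields $r\geq n$, and similarly $r'\geq n'=n$. Now invoke Theorem \ref{theo;1} on both pentads to get
\begin{align*}
L(r,n;A,D,\Gamma)&\simeq \mathfrak{gl}_1^{r-n}\oplus G(C(A,D,\Gamma)),\\
L(r',n';A',D',\Gamma')&\simeq \mathfrak{gl}_1^{r'-n}\oplus G(C(A',D',\Gamma')).
\end{align*}

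The next step is the key lemma I would need: if two Cartan matrices $C, C'$ of pentads of Cartan type are equivalent, then the contragredient Lie algebras $G(C)$ and $G(C')$ are isomorphic. Writing $C=\tilde{\Gamma}\cdot {}^tE_{\pi}\cdot C'\cdot E_{\pi}$, one checks this at the level of the local part and invokes Proposition \ref{pr;mintra}(c). Concretely, if $\{h_i,e_i,f_i\}$ are the Chevalley-type generators of $G(C')$, then rescaled and permuted generators of the form $h_i'=\tilde{\gamma}_i h_{\pi(i)}$, $e_i'=\tilde{\gamma}_i e_{\pi(i)}$, $f_i'=f_{\pi(i)}$ satisfy
$$[h_i',e_j']=\tilde{\gamma}_i C'_{\pi(i)\pi(j)}e_j'=C_{ij}e_j',\quad [h_i',f_j']=-C_{ij}f_j',\quad [e_i',f_j']=\delta_{ij}h_i',$$
reproducing the defining relations of $G(C)$. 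Hence the local parts of $G(C)$ and $G(C')$ are isomorphic, and transitivity (guaranteed by invertibility of $C$ via Lemma \ref{lem;kaclem}) together with Proposition \ref{pr;mintra} gives $G(C)\simeq G(C')$.

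Combining these ingredients yields
\begin{align*}
L(r,n;A,D,\Gamma)&\simeq \mathfrak{gl}_1^{r-n}\oplus G(C(A,D,\Gamma))\\
&\simeq \mathfrak{gl}_1^{r-r'}\oplus \mathfrak{gl}_1^{r'-n}\oplus G(C(A',D',\Gamma'))\\
&\simeq \mathfrak{gl}_1^{r-r'}\oplus L(r',n';A',D',\Gamma'),
\end{align*}
and in this decomposition the summand $L(r',n';A',D',\Gamma')$ is visibly an ideal (being the complement of the central factor). The main obstacle is the key lemma about equivalent Cartan matrices giving isomorphic contragredient Lie algebras; once that rescaling-of-generators computation is in place, the rest is bookkeeping using Theorem \ref{theo;1}.
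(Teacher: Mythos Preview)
Your proposal is correct and follows essentially the same route as the paper: deduce $n=n'$ and $r\geq r'\geq n$ from the hypotheses, apply Theorem \ref{theo;1} to both pentads, and then identify the contragredient pieces $G(C(A,D,\Gamma))\simeq G(C(A',D',\Gamma'))$ to assemble the direct-sum decomposition.

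The only difference is in how that last identification is carried out. The paper stays inside the pentad framework: it observes that $\tilde{\Gamma}\cdot {}^tE_{\pi}\cdot C(A',D',\Gamma')\cdot E_{\pi}$ is itself the Cartan matrix of the pentad $P(r',n';A',D'\cdot E_{\pi},\tilde{\Gamma}\cdot {}^tE_{\pi}\cdot \Gamma'\cdot {}^tE_{\pi}^{-1})$, which is equivalent to $P(r',n';A',D',\Gamma')$ by Proposition \ref{pr;gamma_indep}, and then re-invokes Theorem \ref{theo;1}. You instead argue directly at the level of Chevalley-type generators of $G(C)$ and $G(C')$, producing an explicit isomorphism of local parts and invoking transitivity plus Proposition \ref{pr;mintra}(c). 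Your route is marginally more self-contained (it does not revisit the pentad machinery), while the paper's route has the virtue of showing the isomorphism is realized by an equivalence of pentads. One caution: depending on the convention for $E_{\pi}$, the index in your rescaling formula may need $\pi^{-1}$ rather than $\pi$; this is purely notational and does not affect the argument.
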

\begin{proof}
Since the size of a Cartan matrix is invariant under the equivalence, we have an equation $n=n^{\prime }$.
Moreover, from Proposition \ref{pr;carmat_noninv}, the assumption that $C(A,D,\Gamma )$ and $C(A^{\prime },D^{\prime },\Gamma ^{\prime })$ are invertible implies that $r\geq r^{\prime }\geq  n=n^{\prime }$.
If we take an invertible diagonal matrix $\Gamma ^{\prime \prime}$ and a permutation matrix $E_{\pi }$ such that $C(A,D,\Gamma )=\Gamma ^{\prime \prime}\cdot {}^tE_{\pi } \cdot C(A^{\prime },D^{\prime },\Gamma^{\prime } )\cdot E_{\pi }$, then the matrix $\Gamma ^{\prime \prime}\cdot {}^t E_{\pi } \cdot C(A^{\prime },D^{\prime },\Gamma ^{\prime })\cdot E_{\pi }$ is a Cartan matrix of $P(r^{\prime },n^{\prime };A^{\prime },D^{\prime }\cdot E_{\pi },\Gamma ^{\prime \prime}\cdot {}^tE_{\pi }\cdot \Gamma ^{\prime }\cdot {}^tE_{\pi }^{-1})$ equivalent to $P(r^{\prime },n^{\prime };A^{\prime },D^{\prime },\Gamma ^{\prime })$.
Thus, we have an isomorphism of Lie algebras:
\begin{align*}
&L(r,n;A,D,\Gamma )\simeq \mathfrak{gl}_1^{r-n}\oplus G(C(A,D,\Gamma ))\quad \text{(from Theorem \ref{theo;1})}\\
&\quad \simeq  \mathfrak{gl}_1^{r-n}\oplus G(\Gamma ^{\prime \prime}\cdot {}^tE_{\pi } \cdot C(A^{\prime },D^{\prime },\Gamma^{\prime } )\cdot E_{\pi })\\
&\quad \simeq  \mathfrak{gl}_1^{r-r^{\prime }}\oplus \mathfrak{gl}_1^{r^{\prime }-n^{\prime }}\oplus G(C(A^{\prime },D^{\prime }\cdot E_{\pi },\Gamma ^{\prime \prime}\cdot {}^tE_{\pi }\cdot \Gamma ^{\prime }\cdot {}^tE_{\pi }^{-1} ))\\
&\quad \simeq  \mathfrak{gl}_1^{r-r^{\prime }}\oplus L(r^{\prime },n^{\prime };A^{\prime },D^{\prime }\cdot E_{\pi },\Gamma ^{\prime \prime}\cdot {}^tE_{\pi }\cdot \Gamma ^{\prime }\cdot {}^tE_{\pi }^{-1} )\\
&\quad \simeq \mathfrak{gl}_1^{r-r^{\prime }}\oplus L(r^{\prime },n^{\prime };A^{\prime },D^{\prime },\Gamma ^{\prime }).
\end{align*}
Thus, we have our claim.
\end{proof}
To use Theorems \ref{theo;1} and \ref{theo;deted_by_cartan}, we need the assumption that the Cartan matrix of a pentad of Cartan type is invertible.
On the other hand, unfortunately, the structure of a PC Lie algebra of a non-regular pentad of Cartan type is not determined by its Cartan matrix.
\begin{ex}\label{ex;deetc0}
Let us consider two pentads of Cartan type:
\begin{align}
P\left (
2,1;\left (\begin{array}{cc}0&1\\ 1&0\end{array}\right ),\left (\begin{array}{c}0\\0\end{array}\right ),I_1
\right )\quad \text{and}\quad
P\left (
2,1;\left (\begin{array}{cc}0&1\\ 1&0\end{array}\right ),\left (\begin{array}{c}2\\0\end{array}\right ),I_1
\right ).\label {ex;cartandet0}
\end{align}
Both of these pentads have the same Cartan matrix equals to $O_1$:
\begin{align*}
&C\left (
\left (\begin{array}{cc}0&1\\ 1&0\end{array}\right ),\left (\begin{array}{c}0\\0\end{array}\right ),I_1
\right )=
C\left (
\left (\begin{array}{cc}0&1\\ 1&0\end{array}\right ),\left (\begin{array}{c}2\\0\end{array}\right ),I_1
\right )\\
&\quad =
I_1\cdot \left (\begin{array}{cc}0&0\end{array}\right )\cdot \left (\begin{array}{cc}0&1\\ 1&0\end{array}\right )\cdot \left (\begin{array}{c}0\\ 0\end{array}\right )=
I_1\cdot \left (\begin{array}{cc}2&0\end{array}\right )\cdot \left (\begin{array}{cc}0&1\\ 1&0\end{array}\right )\cdot \left (\begin{array}{c}2\\ 0\end{array}\right )=O_1
\end{align*}
(in particular, the pentads in (\ref  {ex;cartandet0}) are not regular).
However, the corresponding Lie algebras are not isomorphic.
It is easy to show that the first pentad induces a $4$-dimensional commutative Lie algebra:
$$
L\left (
2,1;\left (\begin{array}{cc}0&1\\ 1&0\end{array}\right ),\left (\begin{array}{c}0\\0\end{array}\right ),I_1
\right )
\simeq \mathfrak{gl}_1^4.
$$
On the other hand, the corresponding Lie algebra to the second pentad is not commutative.
Precisely, it is isomorphic to a $4$-dimensional Lie algebra $\mathfrak{L}$ with a non-degenerate symmetric invariant bilinear form $B$ which is spanned by $\{y,h,h^{\prime },x\}$ with relations:
\begin{align*}
&[h,y]=-2y,\quad [h,x]=2x,\quad [x,y]=h^{\prime }, \quad [h^{\prime },h]=[h^{\prime },y]=[h^{\prime },x]=0,\\
&B(x,y)=1,\quad B(h,h^{\prime })=1,\\
&B(h,x)=B(h,y)=B(h^{\prime },x)=B(h^{\prime },y)=B(h,h)=B(h^{\prime },h^{\prime })=B(x,x)=B(y,y)=0.
\end{align*}
Obviously, $\mathfrak{L}$ is not isomorphic to $\mathfrak{gl}_1^4$.
Thus, the corresponding Lie algebras to the pentads (\ref{ex;cartandet0}) are not isomorphic:
$$
L\left (
2,1;\left (\begin{array}{cc}0&1\\ 1&0\end{array}\right ),\left (\begin{array}{c}0\\0\end{array}\right ),I_1
\right )\simeq \mathfrak{gl}_1^4\not \simeq \mathfrak{L}\simeq 
L\left (
2,1;\left (\begin{array}{cc}0&1\\ 1&0\end{array}\right ),\left (\begin{array}{c}2\\0\end{array}\right ),I_1
\right ).
$$
\end{ex}
A loop algebra corresponds to some standard pentad (see \cite [Proposition 3.7]{Sa}).
Moreover, a symmetrizable Lie algebra (see \cite [Chapter 2, \S 2.1]{ka-2}) also corresponds to some standard pentad (see \cite [Example 3.3.6]{arxiv}).
To obtain these Lie algebras, we can particularly take a pentad of Cartan type.
However, such a pentad of Cartan type might not be regular.
\begin{ex}\label {ex;4}
Let $\mathfrak{g}=\mathfrak{sl}_2$, ${\cal L}(\mathfrak{g})={\cal L}(\mathfrak{sl}_2)=\mathbb{C}[t,t^{-1}]\otimes \mathfrak{sl}_2$ be the loop algebra associated to $\mathfrak{g}$ and $K_{\mathfrak{g}}$ the Killing form of $\mathfrak{g}$.
We give a canonical gradation of ${\cal L}(\mathfrak{g})$ as 
$$
{\cal L}(\mathfrak{g})=\bigoplus _{n\in \mathbb{Z}}\mathbb{C}t^n\otimes \mathfrak{g}.
$$
It is known that the Lie algebra ${\cal L}(\mathfrak{g})$ has a non-degenerate symmetric invariant bilinear form $K_{\mathfrak{g}}^t$ defined by
$$
K_{\mathfrak{g}}^t(t^n\otimes \xi ,t^m\otimes \eta )=\delta _{n+m,0}K_{\mathfrak{g}}(\xi, \eta )\quad (n,m\in \mathbb{Z},\ \xi,\eta \in \mathfrak{g}).
$$
From Theorem \ref{th;universality_stap}, we have an isomorphism
\begin{align}
{\cal L}(\mathfrak{g})\simeq L(\mathbb{C}t^0\otimes \mathfrak{g},\ad _{{\cal L}(\mathfrak{g})},\mathbb{C}t^1\otimes \mathfrak{g},\mathbb{C}t^{-1}\otimes \mathfrak{g},K_{\mathfrak{g}}^t).\label {eq;iso_loopsl_2}
\end{align}
Let us find a pentad of Cartan type whose corresponding Lie algebra is isomorphic to ${\cal L}(\mathfrak{g})$.
Put
$$
y:=\begin{pmatrix}0&0\\1&0\end{pmatrix},\quad h:=\begin{pmatrix}1&0\\0&-1\end{pmatrix},\quad x:=\begin{pmatrix}0&1\\0&0\end{pmatrix}.
$$
Then we have an isomorphism
\begin{align}
\mathfrak{g}=\mathfrak{sl}_2=\mathbb{C}y\oplus \mathbb{C}h\oplus \mathbb{C}x\simeq L(\mathbb{C}h,\ad _{\mathfrak{g}},\mathbb{C}x,\mathbb{C}y,K_{\mathfrak{g}})\label {eq2;sl_2}
\end{align}
from Theorem \ref{th;universality_stap}.
It is obvious that the pentad $(\mathbb{C}h,\ad _{\mathfrak{g}},\mathbb{C}x,\mathbb{C}y,K_{\mathfrak{g}})$ is of Cartan type.
Since
$$
[h,y]=-2y, \quad [h,x]=2x,\quad [x,y]=h, \quad K_{\mathfrak{g}}(h,h)=8,\quad K_{\mathfrak{g}}(x,y)=K_{\mathfrak{g}}(y,x)=4,
$$
we have an equivalence of standard pentads:
\begin{align}
(\mathbb{C}h,\ad _{\mathfrak{g}},\mathbb{C}x,\mathbb{C}y,K_{\mathfrak{g}})\simeq 
P\left (
1,1;
{}^t\begin{pmatrix}8\end{pmatrix}^{-1},\begin{pmatrix}2\end{pmatrix},\begin{pmatrix}4\end{pmatrix}
\right )
=
P\left (
1,1;
\begin{pmatrix}\frac{1}{8}\end{pmatrix},\begin{pmatrix}2\end{pmatrix},\begin{pmatrix}4\end{pmatrix}
\right ).\label {eq1;sl_2}
\end{align}
Thus, from (\ref {eq2;sl_2}) and (\ref {eq1;sl_2}), we have an isomorphism
\begin{align}
L\left (
1,1;
\begin{pmatrix}\frac{1}{8}\end{pmatrix},\begin{pmatrix}2\end{pmatrix},\begin{pmatrix}4\end{pmatrix}
\right )\simeq \mathfrak{g}=\mathfrak{sl}_2.\label {eq3;sl_2}
\end{align}
By the way, this pentad of Cartan type in the right hand side has a Cartan matrix:
$$
C\left (
\begin{pmatrix}\frac{1}{8}\end{pmatrix},\begin{pmatrix}2\end{pmatrix},\begin{pmatrix}4\end{pmatrix}
\right )
=\begin{pmatrix}4\end{pmatrix}\cdot \begin{pmatrix}2\end{pmatrix}\cdot \begin{pmatrix}\frac{1}{8}\end{pmatrix}\cdot \begin{pmatrix}2\end{pmatrix}=\begin{pmatrix}2\end{pmatrix}.
$$
It coincides with the Cartan matrix of a simple Lie algebra $\mathfrak{sl}_2$.
Thus, we can give another proof of the isomorphism (\ref{eq3;sl_2}) by Theorem \ref{theo;1}.
Next, let us try to write the isomorphism (\ref{eq;iso_loopsl_2}) using the pentad in (\ref{eq1;sl_2}).
It is easy to show that the representations $(\ad _{{\cal L}(\mathfrak{g})}, \mathbb{C}t^1\otimes \mathfrak{g})$ and $(\ad _{{\cal L}(\mathfrak{g})}, \mathbb{C}t^{-1}\otimes \mathfrak{g})$ of $\mathbb{C}t^0\otimes \mathfrak{g}\simeq  \mathfrak{g}\simeq L(\mathbb{C}h,\ad _{\mathfrak{g}},\mathbb{C}x,\mathbb{C}y,K_{\mathfrak{g}})$ are respectively isomorphic to the positive extension of a $(\mathbb{C}h)$-module $\mathbb{C}y$ and the negative extension of a $(\mathbb{C}h)$-module $\mathbb{C}x$ with respect to the pentad $(\mathbb{C}h,\ad _{\mathfrak{g}},\mathbb{C}x,\mathbb{C}y,K_{\mathfrak{g}})$.
Since a pentad $(\mathbb{C}h,\ad _{\mathfrak{g}},\mathbb{C}y,\mathbb{C}x,K_{\mathfrak{g}})$ is standard and the bilinear form $K_{\mathfrak{g}}$ is symmetric, we have 
\begin{align*}
&{\cal L}(\mathfrak{sl}_2)={\cal L}(\mathfrak{g})\simeq L(\mathbb{C}t^0\otimes \mathfrak{g},\ad _{{\cal L}(\mathfrak{g})},\mathbb{C}t^1\otimes \mathfrak{g},\mathbb{C}t^{-1}\otimes \mathfrak{g},K_{\mathfrak{g}}^t)\\
&\quad \simeq  L(\mathbb{C}h,\ad_{\mathfrak{g}}\oplus \ad_{\mathfrak{g}},\mathbb{C}x\oplus \mathbb{C}y,\mathbb{C}y\oplus \mathbb{C}x,K_{\mathfrak{g}})
\end{align*}
from Theorem \ref{th;chain}.
We can easily check that a pentad $(\mathbb{C}h,\ad_{\mathfrak{g}}\oplus \ad_{\mathfrak{g}},\mathbb{C}x\oplus \mathbb{C}y,\mathbb{C}y\oplus \mathbb{C}x,K_{\mathfrak{g}})$ is equivalent to a pentad of Cartan type
\begin{align}
P\left (1,2;{}^t\begin{pmatrix}8\end{pmatrix}^{-1}, \begin{pmatrix}2&-2\end{pmatrix}, \begin{pmatrix}4&0\\0&4\end{pmatrix}\right )=  P\left (1,2;\begin{pmatrix}\frac{1}{8}\end{pmatrix}, \begin{pmatrix}2&-2\end{pmatrix}, \begin{pmatrix}4&0\\0&4\end{pmatrix}\right )\label {eq4;sl_2}
\end{align}
by a similar argument to the argument of (\ref {eq1;sl_2}).
Thus, we have an isomorphism of Lie algebras:
$$
{\cal L}(\mathfrak{g})={\cal L}(\mathfrak{sl}_2)\simeq  L\left (1,2;\begin{pmatrix}\frac{1}{8}\end{pmatrix}, \begin{pmatrix}2&-2\end{pmatrix}, \begin{pmatrix}4&0\\0&4\end{pmatrix}\right ).
$$
The Cartan matrix of the pentad (\ref {eq4;sl_2}) is given by
$$
 C\left (\begin{pmatrix}\frac{1}{8}\end{pmatrix}, \begin{pmatrix}2&-2\end{pmatrix}, \begin{pmatrix}4&0\\0&4\end{pmatrix}\right )=\begin{pmatrix}4&0\\0&4\end{pmatrix}\cdot \begin{pmatrix}2\\-2\end{pmatrix}\cdot \begin{pmatrix}\frac{1}{8}\end{pmatrix}\cdot \begin{pmatrix}2&-2\end{pmatrix}=\begin{pmatrix}2&-2\\-2&2\end{pmatrix}.
$$
It is not invertible and coincides with the Cartan matrix of type $A_1^{(1)}$.
\end{ex}
\begin{ex}\label{ex;5}
We retain to use the notations in Example \ref{ex;4}.
Let
$$
\hat{\cal L}(\mathfrak{g}):={\cal L}(\mathfrak{g})\oplus \mathbb{C}c\oplus \mathbb{C}d=\left (\bigoplus _{n\leq -1}\mathbb{C}t^n\otimes \mathfrak{g}\right )\oplus \left ((\mathbb{C}t^0\otimes \mathfrak{g})\oplus \mathbb{C}c\oplus \mathbb{C}d\right )\oplus \left (\bigoplus _{n\geq 1}\mathbb{C}t^n\otimes \mathfrak{g}\right )
$$
be a graded Lie algebra with the bracket defined by
\begin{align*}
&[t^n\otimes \xi ,t^m\otimes \eta ]=t^{n+m}\otimes [\xi, \eta ]+n\delta _{n+m,0}K_{\mathfrak{g}}(\xi ,\eta )c,\\
&[c,t^n\otimes \xi ]=[c,c]=[c,d]=0,\quad [d,t^n\otimes \xi ]=nt^n\otimes \xi 
\end{align*}
for any $n,m\in \mathbb{Z}$, $\xi, \eta \in \mathfrak{g}$.
The Lie algebra $\hat {\cal L}(\mathfrak{g})$ is an affine algebra associated to the affine matrix of type $A_1^{(1)}$ (see \cite [Chapter 7]{ka-2}).
It is known that the Lie algebra $\hat {\cal L}(\mathfrak{g})$ has a non-degenerate symmetric invariant bilinear form $\hat {K}_{\mathfrak{g}}^t$ defined by
\begin{align*}
&\hat {K}_{\mathfrak{g}}^t(t^n\otimes \xi ,t^m\otimes \eta )=\delta _{n+m,0}K_{\mathfrak{g}}(\xi, \eta ),\quad \hat {K}_{\mathfrak {g}}^t(c,d)=1,\\
&\hat {K}_{\mathfrak{g}}^t(c,t^n\otimes \xi )=\hat {K}_{\mathfrak{g}}^t(d,t^n\otimes \xi )=\hat {K}_{\mathfrak{g}}^t(c,c)=\hat {K}_{\mathfrak{g}}^t(d,d)=0
\end{align*}
for any $n,m\in \mathbb{Z}$, $\xi, \eta \in \mathfrak{g}$ (see \cite [\S 7.5, p.102]{ka-2}).
Let us find a pentad of Cartan type whose corresponding Lie algebra is isomorphic to $\hat {\cal L}(\mathfrak{g})$.
From Theorem \ref{th;universality_stap} and the argument in Example \ref{ex;4}, we have isomorphisms
\begin{align*}
&\hat {\cal L}(\mathfrak{g})\simeq L\left ((\mathbb{C}t^0\otimes \mathfrak{g})\oplus \mathbb{C}c\oplus \mathbb{C}d,\ad_{\hat {\cal L}(\mathfrak{g})}, \mathbb{C}t^1\otimes \mathfrak{g},\mathbb{C}t^{-1}\otimes \mathfrak{g},\hat {K}_{\mathfrak{g}}^t\right),\\
&(\mathbb{C}t^0\otimes \mathfrak{g})\oplus \mathbb{C}c\oplus \mathbb{C}d\simeq L\left (\mathbb{C}(t^0\otimes h)\oplus \mathbb{C}c\oplus \mathbb{C}d,\ad _{\hat {\cal L}(\mathfrak{g})},\mathbb{C}(t^0\otimes x),\mathbb{C}(t^0\otimes y),\hat {K}_{\mathfrak{g}}^t\right ).
\end{align*}
It is easy to show that the representations $(\ad _{\hat {\cal L}(\mathfrak{g})}, \mathbb{C}t^1\otimes \mathfrak{g})$ and $(\ad _{\hat {\cal L}(\mathfrak{g})}, \mathbb{C}t^{-1}\otimes \mathfrak{g})$ of $(\mathbb{C}t^0\otimes \mathfrak{g})\oplus \mathbb{C}c\oplus \mathbb{C}d$ are respectively isomorphic to the positive extension of a $(\mathbb{C}(t^0\otimes h)\oplus \mathbb{C}c\oplus \mathbb{C}d)$-module $\mathbb{C}(t^1\otimes y)$ and the negative extension of a $(\mathbb{C}(t^0\otimes h)\oplus \mathbb{C}c\oplus \mathbb{C}d)$-module $\mathbb{C}(t^{-1}\otimes x)$ with respect to $(\mathbb{C}(t^0\otimes h)\oplus \mathbb{C}c\oplus \mathbb{C}d,\ad _{\hat {\cal L}(\mathfrak{g})},\mathbb{C}x,\mathbb{C}y,\hat {K}_{\mathfrak{g}}^t)$.
Thus, from Theorem \ref{th;chain}, we have an isomorphism
$$
\hat {\cal L}(\mathfrak{g})\simeq  L\left (\mathbb{C}(t^0\otimes h)\oplus \mathbb{C}c\oplus \mathbb{C}d,\ad _{\hat {\cal L}(\mathfrak{g})}, \mathbb{C}(t^0\otimes x)\oplus \mathbb{C}(t^1\otimes y),\mathbb{C}(t^0\otimes y)\oplus \mathbb{C}(t^{-1}\otimes x),\hat {K}_{\mathfrak{g}}^t \right).
$$
We can easily check that the pentad $ (\mathbb{C}(t^0\otimes h)\oplus \mathbb{C}c\oplus \mathbb{C}d,\ad _{\hat {\cal L}(\mathfrak{g})}, \mathbb{C}(t^0\otimes x)\oplus \mathbb{C}(t^1\otimes y),\mathbb{C}(t^0\otimes y)\oplus \mathbb{C}(t^{-1}\otimes x),\hat {K}_{\mathfrak{g}}^t )$ is equivalent to a pentad of Cartan type
\begin{align}
P\left (
3,2;{}^t\begin{pmatrix}8&0&0\\0&0&1\\0&1&0\end{pmatrix}^{-1}, \begin{pmatrix}2&-2\\0&0\\0&1\end{pmatrix}, \begin{pmatrix}4&0\\0&4\end{pmatrix}
\right )
=
P\left (
3,2;\begin{pmatrix}\frac{1}{8}&0&0\\0&0&1\\0&1&0\end{pmatrix}, \begin{pmatrix}2&-2\\0&0\\0&1\end{pmatrix}, \begin{pmatrix}4&0\\0&4\end{pmatrix}
\right ).\label {eq5;sl_2}
\end{align}
Thus, we have an isomorphism 
$$
\hat {\cal L}(\mathfrak{g})\simeq 
L
\left (
3,2;\begin{pmatrix}\frac{1}{8}&0&0\\0&0&1\\0&1&0\end{pmatrix}, \begin{pmatrix}2&-2\\0&0\\0&1\end{pmatrix}, \begin{pmatrix}4&0\\0&4\end{pmatrix}
\right ).
$$
The Cartan matrix of the pentad (\ref {eq5;sl_2}) is given by
\begin{align*}
&C\left (
\begin{pmatrix}\frac{1}{8}&0&0\\0&0&1\\0&1&0\end{pmatrix}, \begin{pmatrix}2&-2\\0&0\\0&1\end{pmatrix}, \begin{pmatrix}4&0\\0&4\end{pmatrix}
\right )\\
&\quad =\begin{pmatrix}4&0\\0&4\end{pmatrix}\cdot \begin{pmatrix}2&0&0\\-2&0&1\end{pmatrix}\cdot \begin{pmatrix}\frac{1}{8}&0&0\\0&0&1\\0&1&0\end{pmatrix}\cdot \begin{pmatrix}2&-2\\0&0\\0&1\end{pmatrix} = \begin{pmatrix}2&-2\\-2&2\end{pmatrix}.
\end{align*}
It is not invertible and coincides with the Cartan matrix of type $A_1^{(1)}$.
\end{ex}
As we have seen in Examples \ref {ex;4} and \ref {ex;5}, the pentads of Cartan type (\ref {eq4;sl_2}) and (\ref {eq5;sl_2}) have the same Cartan matrix $A_1^{(1)}$.
Since this matrix $A_1^{(1)}$ is not invertible, we can not apply Theorem \ref {theo;deted_by_cartan} to these corresponding PC Lie algebras.
In this case, we have
$$
\hat {\cal L}(\mathfrak{g})=\left (\bigoplus _{n\in \mathbb{Z}}\mathbb{C}t^n\otimes \mathfrak{g}\right )\oplus \mathbb{C}c\oplus \mathbb{C}d \not \simeq \mathfrak{gl}_1^2\oplus \left (\bigoplus _{n\in \mathbb{Z}}\mathbb{C}t^n\otimes \mathfrak{g}\right )=\mathfrak{gl}_1^2\oplus {\cal L}(\mathfrak{g}).
$$
Indeed, the center of $\hat {\cal L}(\mathfrak{g})$ is $1$-dimensional vector space $\mathbb{C}c$.

\subsection {Chain rule and pentads of Cartan type}
As we have seen in the previous section, we can use the results of standard pentads to study contragredient Lie algebras with an invertible Cartan matrix.
In this section, we shall aim to consider how to apply chain rule (Theorem \ref {th;chain}) to PC Lie algebras and their representations (Theorem \ref {th;chainstap} and Lemma \ref {lemma;lemma_contraemb}).
For this, we need some notion and notations.
\begin{defn}[triangular decomposition, \text{cf. \cite [Chapter 1.3, p.7]{ka-2}}]\label{defn;tridec}
Let $P(r,n;A,D,\Gamma )$ be a pentad of Cartan type and $L(r,n;A,D,\Gamma )=\bigoplus _{n\in \mathbb{Z}}V_n$ the corresponding graded Lie algebra.
Let $\mathfrak{n}_+$ and $\mathfrak{n}_-$ be respectively subalgebras of $L(r,n;A,D,\Gamma )$ generated by $V_1=\mathbb{C}_D^{\Gamma }$ and $V_{-1}=\mathbb{C}_{-D}^{\Gamma }$, i.e. $\mathfrak{n}_{+ }=\bigoplus _{n\geq 1}V_n$ and $\mathfrak{n}_{- }=\bigoplus _{n\leq -1}V_n$.
Then, we have a direct sum of vector spaces
$$
L(r,n;A,D,\Gamma )=\mathfrak{n}_-\oplus \mathfrak{h}^r\oplus \mathfrak{n}_+.
$$
We call it a {\it triangular decomposition} of $L(r,n;A,D,\Gamma )$.
\end{defn}
\begin{defn}[highest/lowest weight module, \text {cf. \cite [Chapter 9.2, p.146]{ka-2}}]\label {defn;highest_module}
Under the notations of Definition \ref{defn;tridec}, if an $L(r,n;A,D,\Gamma )$-module $(\rho ,V)$ satisfies the following conditions, we call $V$ {\it a highest weight module with highest weight $\lambda \in \mathrm {Hom }(\mathfrak{h}^r,\mathbb{C})$} (respectively {\it a lowest weight module with lowest weight $\lambda $}):
\begin{itemize}
\item [(i)]{there exists a non-zero vector $v^{\lambda }\in V$ such that $\rho (h\otimes v^{\lambda })=\lambda (h)v^{\lambda }$ for any $h\in \mathfrak{h}^r$ and $\rho (\mathfrak{n}_+\otimes \mathbb{C}v^{\lambda})=\{0\}$ (respectively $v_{\lambda }\in V$ such that $\rho (h\otimes v_{\lambda })=\lambda (h)v_{\lambda }$ for any $h\in \mathfrak{h}^r$ and $\rho (\mathfrak{n}_-\otimes \mathbb{C}v_{\lambda })=\{0\}$),}
\item [(ii)]{$V$ is generated by $\mathfrak{n}_-$ and $\mathbb{C}v^{\lambda }$ (respectively  $\mathfrak{n}_+$ and $\mathbb{C}v_{\lambda }$).}
\end{itemize}
Moreover, we call such a non-zero vector $v^{\lambda }$ a {\it highest weight vector} of $V^{\lambda }$ (respectively $v_{\lambda }$ a {\it lowest weight vector} of $V_{\lambda }$).
In particular cases when $V$ is irreducible, we denote the highest (respectively lowest) weight module by $(\rho ^{\lambda },V^{\lambda })$ (respectively $(\rho _{\lambda },V_{\lambda })$).
\end{defn}
\begin{pr}\label {pr;highlow_exists}
If an $L(r,n;A,D,\Gamma )$-module $V^{\lambda }$ is an irreducible highest weight module with highest weight $\lambda \in \mathrm {Hom }(\mathfrak{h}^r,\mathbb{C})$ (respectively $V_{\lambda }$ is an irreducible lowest weight module with lowest weight $\lambda $), then $V^{\lambda }$ is isomorphic to the negative extension of $\mathbb{C}v^{\lambda }\subset V$ (respectively $V_{\lambda }$ is isomorphic to the positive extension of $\mathbb{C}v_{\lambda }$) with respect to $P(r,n;A,D,\Gamma )$.
In particular, for any $\lambda \in \mathrm {Hom }(\mathfrak{h}^r,\mathbb{C})$, there exists a unique irreducible highest weight (respectively irreducible lowest weight) $L(r,n;A,D,\Gamma )$-module whose highest weight (respectively lowest weight) is $\lambda $ up to isomorphism.
\end{pr}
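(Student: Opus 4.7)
The plan is to identify $V^\lambda$ with the negative extension of the one-dimensional $\mathfrak{h}^r$-module $\mathbb{C}v^\lambda$ of weight $\lambda$ with respect to $P(r,n;A,D,\Gamma)$, and then to invoke the uniqueness clause of Theorem \ref{th;p_g_n_g}. I treat only the highest weight case; the lowest weight case is symmetric, obtained by interchanging the roles of $V_1,\mathfrak{n}_+$ and $V_{-1},\mathfrak{n}_-$. The first task is to equip $V^\lambda$ with a $\mathbb{Z}_{\leq 0}$-grading compatible with the $\mathbb{Z}$-grading of $L(r,n;A,D,\Gamma)$. I set $V^\lambda_0:=\mathbb{C}v^\lambda$ and declare $V^\lambda_{-m}$ to be the span of all $V_{-i_1}\cdots V_{-i_k}\cdot v^\lambda$ with $i_1+\cdots+i_k=m$. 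That $V^\lambda=\sum_{m\leq 0}V^\lambda_{-m}$ is direct follows by realizing $V^\lambda$ as a quotient of the Verma-type module $U(L(r,n;A,D,\Gamma))\otimes_{U(\mathfrak{h}^r\oplus\mathfrak{n}_+)}\mathbb{C}v^\lambda$, which carries a tautological grading inherited from $U(\mathfrak{n}_-)$; the defining relations of $V^\lambda$ generate a graded submodule, so the grading descends. The identity $V_{-k}=[V_{-1},V_{-k+1}]$ for $k\geq 2$, valid in $L(r,n;A,D,\Gamma)$ by Theorem \ref{th;univ_stap}(i), then gives by induction $V^\lambda_{-m-1}=V_{-1}\cdot V^\lambda_{-m}$ for all $m\geq 0$, which is the surjectivity condition in Theorem \ref{th;p_g_n_g}.

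The second task is the transitivity condition. Suppose $v\in V^\lambda_{-m}$ with $m\geq 1$ and $V_1\cdot v=0$. The dual identity $V_k=[V_1,V_{k-1}]$ for $k\geq 2$ yields, by induction on $k$, that $V_k\cdot v=0$ for all $k\geq 1$, so $\mathfrak{n}_+\cdot v=0$. Since $V^\lambda$ is a highest weight module, its $\mathfrak{h}^r$-action is semisimple, and the weight-component decomposition of $v$ refines the grading; each weight component $v'$ is still annihilated by $\mathfrak{n}_+$ and is an $\mathfrak{h}^r$-eigenvector. By the PBW-style triangular decomposition, $U(L(r,n;A,D,\Gamma))\cdot v'=U(\mathfrak{n}_-)\cdot v'\subset\bigoplus_{k\leq -m}V^\lambda_k$, a proper submodule of $V^\lambda$ since it misses $v^\lambda\in V^\lambda_0$. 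Irreducibility of $V^\lambda$ forces $v'=0$, hence $v=0$. With all three bullet conditions in Theorem \ref{th;p_g_n_g} verified, I conclude $V^\lambda\simeq\widetilde{(\mathbb{C}v^\lambda)}^-$.

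For the existence and uniqueness clauses, existence is handled by applying Theorem \ref{th;p_g_n_g} in reverse: let $W:=\widetilde{(\mathbb{C}_\lambda)}^-$ be the negative extension of the one-dimensional $\mathfrak{h}^r$-module $\mathbb{C}_\lambda=\mathbb{C}w_0$ of weight $\lambda$. Then $V_1\cdot w_0\subset W_1=\{0\}$ and $W=U(\mathfrak{n}_-)\cdot w_0$ by the surjectivity $V_{-1}\cdot W_{-m}=W_{-m-1}$, so $W$ is a highest weight module of weight $\lambda$. To see $W$ is irreducible, let $W'\subset W$ be a nonzero proper submodule; if $W'_0\neq 0$ then $w_0\in W'$ and $W'=W$, a contradiction, so $W'\subset\bigoplus_{m\leq-1}W_m$. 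Because the grades of $W'$ are bounded above by $-1$, a largest index $m_0\leq -1$ with $W'_{m_0}\neq 0$ exists, and any nonzero $w'\in W'_{m_0}$ satisfies $V_1\cdot w'\subset W'_{m_0+1}=\{0\}$, contradicting the transitivity of $W$. Uniqueness then follows from the first assertion: both candidates are isomorphic to $W$. The main obstacle I anticipate is Step 1, namely setting up the $\mathbb{Z}$-grading on $V^\lambda$ rigorously; the naive sum need not be direct, and the weight-lattice grading under $\mathfrak{h}^r$ need not match the $\mathbb{Z}$-grading of $L(r,n;A,D,\Gamma)$ when the weights of $V_1$ (i.e., the columns of $D$) are $\mathbb{Z}$-dependent, which is why the Verma-module detour is needed to obtain the grading compatibly.
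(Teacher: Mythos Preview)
Your approach is the same as the paper's: verify the three conditions of Theorem~\ref{th;p_g_n_g} for $V^\lambda$, the crux being transitivity, which you (like the paper) obtain from irreducibility by exhibiting a proper submodule generated by a hypothetical $\mathfrak{n}_+$-singular vector in negative degree. You are considerably more careful than the paper about two points it glosses over: the construction of the $\mathbb{Z}$-grading on $V^\lambda$ (your Verma-quotient detour is the right fix), and the existence clause, which the paper does not address at all.

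Two remarks. First, the weight-component step in your transitivity argument is unnecessary: once $\mathfrak{n}_+\cdot v=0$ for $v\in V^\lambda_{-m}$, PBW already gives $U(L)\,v=U(\mathfrak{n}_-)U(\mathfrak{h}^r)\,v\subset\bigoplus_{k\le -m}V^\lambda_k$, since $\mathfrak{h}^r$ preserves the grading. No need to split $v$ into eigenvectors first.

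Second, your irreducibility argument for $W=\widetilde{(\mathbb{C}_\lambda)}^-$ has a genuine gap. The inference ``$W'_0=0$, so $W'\subset\bigoplus_{m\le-1}W_m$'' and the extraction of a largest index $m_0$ both presuppose that an arbitrary $L$-submodule $W'\subset W$ is $\mathbb{Z}$-graded. That is not automatic here: the grading on $L(r,n;A,D,\Gamma)$ need not come from any element of $\mathfrak{h}^r$, and when the columns of $D$ admit a nontrivial $\mathbb{Z}_{\ge0}$-relation, distinct graded pieces $W_m$ can share $\mathfrak{h}^r$-weights, so the weight decomposition of $W'$ does not force it to be graded. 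The cleanest repair is simply to invoke Proposition~\ref{pr;p_g_n_g_irr} (irreducibility of the extension is equivalent to irreducibility of the base $\mathbb{C}_\lambda$). Alternatively, argue directly: given $0\ne w'\in W'$, iterate the action of $V_1$ until just before annihilation (possible since any element of $W$ is supported in finitely many degrees and $V_1$ raises degree) to obtain $0\ne w''\in W'$ with $V_1\cdot w''=0$; writing $w''=\sum_m w''_m$ and comparing the lowest-degree component of $V_1\cdot w''$ with $0$, transitivity forces $w''\in W_0$, whence $w_0\in W'$ and $W'=W$.
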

\begin{proof}
Let $V^{\lambda }=\bigoplus _{m\leq 0}V^{\lambda }_m$ be a canonical gradation of a graded Lie module of $L(r,n;A,D,\Gamma )$.
To prove our claim on $V^{\lambda }$, it is sufficient to show that $V^{\lambda }$ is transitive.
Suppose that there exists a non-zero element $v_m\in V_m^{\lambda }$ $(m\leq -1)$ such that $\rho ^{\lambda }(\mathbb{C}_D^{\Gamma }\otimes v_m)=\{0\}$.
Then a submodule $U^{v_m}$ of $V^{\lambda }$ generated by $\mathfrak{n}_-\oplus \mathfrak{h}^r$ and $\mathbb{C}{v_m}$ is a non-zero subspace of $V^{\lambda }$.
On the other hand, since $U^{v_m}$ does not contain $V_0^{\lambda }$, $U^{v_m}$ is a proper submodule of $V^{\lambda }$.
It contradicts to the assumption that $V^{\lambda }$ is irreducible.
Thus, $V^{\lambda }$ is transitive.
By the same argument, we have our claim on $V_{\lambda }$.
\end{proof}
In particular, an irreducible highest/lowest weight module of a PC Lie algebra is determined by its highest/lowest weight.
Here, note that even if a module of a PC Lie algebra satisfies the assumptions (i) and (ii) in Definition \ref{defn;highest_module}, it might not be irreducible.
That is, to obtain Proposition \ref {pr;highlow_exists}, we can not omit the assumption on irreducibility.
\begin{ex}
Let us again consider the pentads of Cartan type considered in Example \ref{ex;deetc0}:
$$
P=P\left (
2,1;\left (\begin{array}{cc}0&1\\ 1&0\end{array}\right ),\left (\begin{array}{c}0\\0\end{array}\right ),I_1
\right )
$$
and denote the corresponding PC Lie algebra by $L(P)$.
Then $L(P)=\mathfrak{n}_-\oplus \mathfrak{h}^2\oplus \mathfrak{n}_+$ is $4$-dimensional and commutative, moreover, we have equations
$$
\dim \mathfrak{n}_-=1,\quad \dim \mathfrak{h}^2=2,\quad \dim \mathfrak{n}_+=1.
$$
Take bases $\{f\}$ of $\mathfrak{n}_-$, $\{\epsilon _1,\epsilon _2\}$ of $\mathfrak{h}^2$, $\{e\}$ of $\mathfrak{n}_+$ and define a representation $\rho $ of $
L(P)
$
on $V_0=\mathrm {M}(2,1;\mathbb{C})$ by:
\begin{align*}
&\rho \left (f\otimes \left (\begin{array}{c}v_1\\v_2\end{array}\right )\right )=\rho \left (\epsilon _1\otimes \left (\begin{array}{c}v_1\\v_2\end{array}\right )\right )=\rho \left (\epsilon _2\otimes \left (\begin{array}{c}v_1\\v_2\end{array}\right )\right )=\left (\begin{array}{c}0\\0\end{array}\right ),\\
&\rho \left (e\otimes \left (\begin{array}{c}v_1\\v_2\end{array}\right )\right )=\left (\begin{array}{c}v_2\\0\end{array}\right )\quad \text{for any $\left (\begin{array}{c}v_1\\v_2\end{array}\right )\in V_0$}.
\end{align*}
Then, we have that $V_0$ has a lowest weight vector $v_{0 }={}^t\left (\begin{array}{cc}0&1\end{array}\right )\in V_0$ with lowest weight $0\in \mathrm {Hom }(\mathfrak{h}^2,\mathbb{C})$ and is generated by $\mathbb{C}v_{0 }$ and $\mathfrak{n}_+$.
However, $V$ is not irreducible.
Here, we regard $\mathbb{C}v_0$ as $1$-dimensional trivial module of $\mathfrak{h}^2$.
Then the positive extension of $\mathbb{C}v_0$ with respect to $P$ is also $1$-dimensional trivial $L(P)$-module. 
Thus, a reducible $L(P)$-module $V_0$ is not isomorphic to the positive extension of $\mathbb{C}v_0$.
\end{ex}
\begin{lemma}\label {lem;highlowdual}
Let $P(r,n;A,D,\Gamma )$ be a pentad of Cartan type.
Then, for any $\lambda \in \mathrm {Hom }(\mathfrak{h}^r,\mathbb{C})$, there exists a non-degenerate $L(r,n;A,D,\Gamma )$-invariant bilinear form $\langle \cdot,\cdot \rangle :V_{\lambda }\times V^{-\lambda }\rightarrow \mathbb{C}$ between $(\rho _{\lambda },V_{\lambda })$ and $(\rho ^{-\lambda },V^{-\lambda })$.
Moreover, when the pentad $P(r,n;A,D,\Gamma )$ is symmetric, pentads $(L(r,n;A,D,\Gamma ),\rho _{\lambda },V_{\lambda },V^{-\lambda },B_A^L)$ and $(L(r,n;A,D,\Gamma ),\rho ^{-\lambda },V^{-\lambda },V_{\lambda },B_A^L)$ are standard.
\end{lemma}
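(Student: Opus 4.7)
The plan is to realize $V^{-\lambda}$ inside the graded dual of $V_{\lambda}$ and use the resulting evaluation pairing. Form the graded dual $W:=\bigoplus_{m\geq 0}((V_{\lambda})_{m})^{*}$, placing $((V_{\lambda})_{m})^{*}$ in grade $-m$, and equip it with the contragredient $L(r,n;A,D,\Gamma)$-action $(a\cdot\phi)(v)=-\phi(a\cdot v)$. The functional $v_{\lambda}^{*}\in ((V_{\lambda})_{0})^{*}$ dual to $v_{\lambda}$ then sits in grade $0$, has $\mathfrak{h}^{r}$-weight $-\lambda$, and is annihilated by $\mathfrak{n}_{+}$ because for $x\in V_{k}$ with $k\geq 1$ the element $x\cdot v$ always lies in strictly positive grade of $V_{\lambda}$ and so is killed by $v_{\lambda}^{*}$. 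The $L$-submodule $U\subseteq W$ generated by $v_{\lambda}^{*}$ is therefore a highest weight module of weight $-\lambda$, and restricting the evaluation pairing gives an $L$-invariant bilinear form $V_{\lambda}\times U\to\mathbb{C}$.

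Next I would show this pairing is non-degenerate and that $U\simeq V^{-\lambda}$. Non-degeneracy on the $U$-side is automatic from $U\hookrightarrow V_{\lambda}^{*}$. On the $V_{\lambda}$-side, $U^{\perp}\subseteq V_{\lambda}$ is an $L$-submodule not containing $v_{\lambda}$ (since $\langle v_{\lambda},v_{\lambda}^{*}\rangle=1$), hence $U^{\perp}=0$ by irreducibility of $V_{\lambda}$ (built into the notation via Definition \ref{defn;highest_module}). To identify $U$ with $V^{-\lambda}$ it suffices, by Proposition \ref{pr;highlow_exists}, to show $U$ is irreducible: if $M\subsetneq U$ is proper then $M\cap U_{0}=0$ (otherwise $M\ni v_{\lambda}^{*}$ forces $M=U$), so $M\subseteq\bigoplus_{k\leq -1}U_{k}$; the gradation of the evaluation pairing then forces $\langle v_{\lambda},M\rangle=0$, so $v_{\lambda}\in M^{\perp}$, and irreducibility of $V_{\lambda}$ gives $M^{\perp}=V_{\lambda}$, whence $M=0$ by the non-degeneracy on the $U$-side just established.

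For the second assertion I would invoke chain rule (Theorem \ref{th;chain}). The auxiliary pentad $(\mathfrak{h}^{r},\pi_{\lambda},\mathbb{C}v_{\lambda},\mathbb{C}v^{-\lambda},B_{A})$ with $\pi_{\lambda}(a\otimes v_{\lambda})=\lambda(a)v_{\lambda}$ is symmetric and standard: the canonical pairing is non-degenerate, $B_{A}$ is symmetric by hypothesis, and the $\Phi$-map sends $v_{\lambda}\otimes v^{-\lambda}$ to the unique element of the finite-dimensional $\mathfrak{h}^{r}$ representing $\lambda$ under $B_{A}$. Applying Theorem \ref{th;chain} to $(\mathfrak{h}^{r},\Box_{D}^{r},\mathbb{C}_{D}^{\Gamma},\mathbb{C}_{-D}^{\Gamma},B_{A})$ and this auxiliary pentad yields that the pentad $(L(r,n;A,D,\Gamma),\tilde{\pi}_{\lambda}^{+},\widetilde{\mathbb{C}v_{\lambda}}^{+},\widetilde{\mathbb{C}v^{-\lambda}}^{-},B_{A}^{L})$ is standard, and a short transitivity argument (exactly as in the proof of Proposition \ref{pr;highlow_exists}, using one-dimensionality of the top/base) shows that each extension is already irreducible, hence equals $V_{\lambda}$ and $V^{-\lambda}$ respectively. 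For the swapped pentad $(L(r,n;A,D,\Gamma),\rho^{-\lambda},V^{-\lambda},V_{\lambda},B_{A}^{L})$, SP1 is provided by the first assertion of the lemma and the required $\Phi$-map is $\Phi'(\phi\otimes v):=-\Phi_{\rho_{\lambda}}(v\otimes\phi)$, which satisfies the defining equation via the invariance identity $\langle\rho_{\lambda}(a\otimes v),\phi\rangle=-\langle v,\rho^{-\lambda}(a\otimes\phi)\rangle$ of Definition \ref{defn;phimap}.

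The main obstacle is the irreducibility argument for $U$, which at first looks circular: non-degeneracy of the pairing on $U$ is invoked to conclude $M=0$, yet one might expect this to need $U\simeq V^{-\lambda}$ in advance. The resolution is that non-degeneracy on the $U$-side is free from the embedding $U\subseteq V_{\lambda}^{*}$, while the $V_{\lambda}$-side uses only irreducibility of $V_{\lambda}$; carefully separating the two sides and exploiting the graded structure to produce the key identity $\langle v_{\lambda},M\rangle=0$ is what breaks the apparent circularity.
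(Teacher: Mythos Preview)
Your proof is correct. For the second assertion you follow essentially the paper's route: the paper also sets up the one-dimensional $\mathfrak{h}^r$-pentad $(\mathfrak{h}^r,\rho_\lambda,\mathbb{C}v_\lambda,\mathbb{C}v^{-\lambda},B_A)$, observes it is standard by finite-dimensionality, and applies the chain rule (Theorem~\ref{th;chain}) together with Proposition~\ref{pr;highlow_exists} to conclude that $(L(r,n;A,D,\Gamma),\rho_\lambda,V_\lambda,V^{-\lambda},B_A^L)$ is standard. For the swapped pentad the paper simply writes ``the same holds''; your explicit formula $\Phi'(\phi\otimes v)=-\Phi_{\rho_\lambda}(v\otimes\phi)$ makes that step precise and is in fact a general feature of any standard pentad upon exchanging $V$ and $\mathcal{V}$.

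Where you genuinely diverge from the paper is in the first assertion. The paper obtains the non-degenerate pairing on $V_\lambda\times V^{-\lambda}$ only as a byproduct of SP1 once the pentad is shown standard via chain rule---hence only in the symmetric case. Your graded-dual construction, realizing $V^{-\lambda}$ as the submodule of $\bigoplus_m((V_\lambda)_m)^*$ generated by $v_\lambda^*$ and proving its irreducibility directly, is more elementary and works without assuming $B_A$ symmetric, so it actually proves the first sentence of the lemma as stated rather than only the symmetric subcase. The irreducibility argument you give (a proper submodule $M$ misses grade $0$, hence pairs trivially with $v_\lambda$, hence $M^\perp=V_\lambda$ by irreducibility of $V_\lambda$, hence $M=0$ since $U$ sits inside the dual) is sound, and the apparent circularity you flag is resolved exactly as you describe. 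One minor simplification: rather than a separate transitivity argument to see that the positive and negative extensions of the one-dimensional $\mathfrak{h}^r$-modules are irreducible, you may invoke Proposition~\ref{pr;p_g_n_g_irr} directly.
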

\begin{proof}
Take a non-zero highest weight vector $v ^{-\lambda }\in  V^{-\lambda }$ and a non-zero lowest weight vector $v_{\lambda }\in V_{\lambda }$ and define a pairing $\langle \cdot ,\cdot \rangle :\mathbb{C}v_{\lambda }\times \mathbb{C}v ^{-\lambda }\rightarrow \mathbb{C}$ by $\langle v_{\lambda },v ^{-\lambda }\rangle =1$.
Then $\mathfrak{h}^r$-modules $\mathbb{C}v_{\lambda }$ and $\mathbb{C}v^{-\lambda }$ are dual modules of each other via this pairing $\langle \cdot ,\cdot \rangle $.
Moreover, a pentad $(\mathfrak{h}^r,\rho _{\lambda },\mathbb{C}v_{\lambda },\mathbb{C}v ^{-\lambda },B_A)$ is standard since all objects $\mathfrak{h}^r$, $\mathbb{C}v_{\lambda }$, $\mathbb{C}v ^{-\lambda }$ are finite-dimensional.
Thus, we have that the pentad $(L(r,n;A,D,\Gamma ),\rho _{\lambda },V_{\lambda },V^{-\lambda },B_A^L)$ is standard from Theorem \ref{th;chain} and Proposition \ref{pr;highlow_exists}.
The same holds on $(L(r,n;A,D,\Gamma ),\rho ^{-\lambda },V^{-\lambda },V_{\lambda },B_A^L)$.
\end{proof}
Under these notations, we have the following theorem from Theorem \ref{th;chain} immediately.
\begin{theo}\label {th;chainstap}
Let $P(r,n;A,D,\Gamma )$ be a symmetric pentad of Cartan type, take representations $(\rho _{\lambda _i},V_{\lambda _i})$ and $(\rho ^{-\lambda _i},V^{-\lambda _i})$ of $L(r,n;A,D,\Gamma )$ for $(i=1,\ldots ,k)$.
Then we have an isomorphism of Lie algebras up to gradation:
\begin{align}
&L\left (L(r,n;A,D,\Gamma ), \rho _{\lambda _1}\oplus \cdots \oplus \rho _{\lambda _k},V_{\lambda _1}\oplus \cdots \oplus V_{\lambda _k}, V^{-\lambda _1}\oplus \cdots \oplus  V^{-\lambda _k}, B_A^L\right )\notag \\
&\quad \simeq 
L
\left (
r,n+k;A,\left (\begin{array}{c|c}D&\begin{array}{ccc}\lambda _1(\epsilon _1)&\cdots &\lambda _k(\epsilon _1)\\ \vdots &\ddots &\vdots \\ \lambda _1(\epsilon _r)&\cdots &\lambda _k(\epsilon _r)\end{array}\end{array}\right ), \left (\begin{array}{c|c}\Gamma &0\\ \hline 0&I_k\end{array}\right )
\right ),\label {eq;chainstap}
\end{align}
where $\{\epsilon _1,\ldots ,\epsilon _r\}$ is a basis of the $\mathbb{C}$-vector space $\mathfrak{h}^r$.
In particular, a PC Lie algebra and its irreducible lowest (respectively highest) weight modules can be embedded into positive (respectively negative) side of some larger PC Lie algebra.
\end{theo}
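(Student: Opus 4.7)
The plan is to apply the chain rule of standard pentads (Theorem \ref{th;chain}) to the symmetric pentad of Cartan type $P(r,n;A,D,\Gamma)$ together with the $\mathfrak{h}^r$-modules spanned by the lowest/highest weight vectors of each $V_{\lambda_i}$ and $V^{-\lambda_i}$, and then to recognize the resulting enlarged pentad as a pentad of Cartan type with augmented data.

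First, by Proposition \ref{pr;highlow_exists}, each irreducible lowest weight module $V_{\lambda_i}$ is isomorphic to the positive extension of its one-dimensional lowest weight space $\mathbb{C}v_{\lambda_i}$ (regarded as an $\mathfrak{h}^r$-module on which $\epsilon_j$ acts by multiplication by $\lambda_i(\epsilon_j)$), and each $V^{-\lambda_i}$ is the negative extension of $\mathbb{C}v^{-\lambda_i}$. Invoking Proposition \ref{pr;p_g_n_g_oplus}, if we set
\[
U := \bigoplus_{i=1}^k \mathbb{C}v_{\lambda_i}, \qquad \mathcal{U} := \bigoplus_{i=1}^k \mathbb{C}v^{-\lambda_i},
\]
with the natural $\mathfrak{h}^r$-action $\pi$ and the pairing $\langle v_{\lambda_i}, v^{-\lambda_j}\rangle := \delta_{ij}$ extending the one from the proof of Lemma \ref{lem;highlowdual}, then $\widetilde{U}^+ \simeq \bigoplus_i V_{\lambda_i}$ and $\widetilde{\mathcal{U}}^- \simeq \bigoplus_i V^{-\lambda_i}$ as $L(r,n;A,D,\Gamma)$-modules. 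Moreover, since all objects in sight are finite-dimensional, $\mathfrak{h}^r$ is commutative, and $B_A$ is symmetric (because $A$ is, by the hypothesis that $P(r,n;A,D,\Gamma)$ is symmetric), the pentad $(\mathfrak{h}^r, \pi, U, \mathcal{U}, B_A)$ is a symmetric standard pentad.

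Next, applying chain rule (Theorem \ref{th;chain}) to $(\mathfrak{h}^r,\Box_D^r,\mathbb{C}_D^\Gamma,\mathbb{C}_{-D}^\Gamma,B_A)$ and $(\mathfrak{h}^r,\pi,U,\mathcal{U},B_A)$ yields
\begin{align*}
&L\bigl(L(r,n;A,D,\Gamma),\, \rho_{\lambda_1}\oplus\cdots\oplus\rho_{\lambda_k},\, V_{\lambda_1}\oplus\cdots\oplus V_{\lambda_k},\, V^{-\lambda_1}\oplus\cdots\oplus V^{-\lambda_k},\, B_A^L\bigr) \\
&\qquad \simeq L\bigl(\mathfrak{h}^r,\, \Box_D^r\oplus \pi,\, \mathbb{C}_D^\Gamma\oplus U,\, \mathbb{C}_{-D}^\Gamma\oplus \mathcal{U},\, B_A\bigr).
\end{align*}
The right-hand pentad satisfies the hypotheses of Proposition \ref{pr;cartantype}: $\mathfrak{h}^r$ is a finite-dimensional commutative Lie algebra, and $\Box_D^r\oplus \pi$ is a direct sum of diagonalizable actions, hence diagonalizable. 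Its data as a pentad of Cartan type can then be read off directly: the integer $r$ is unchanged; $n$ is replaced by $n+k$; $A$ is unchanged; each new column of the $D$-matrix is the list of $\epsilon_j$-eigenvalues of the corresponding $v_{\lambda_i}$, namely ${}^t(\lambda_i(\epsilon_1),\ldots,\lambda_i(\epsilon_r))$, giving the claimed augmented matrix; and $\Gamma$ is augmented by the $k\times k$ identity block as a consequence of the normalization $\langle v_{\lambda_i}, v^{-\lambda_j}\rangle = \delta_{ij}$. Proposition \ref{pr;stapequi_lieiso} then delivers the isomorphism (\ref{eq;chainstap}), and the ``in particular'' statement is immediate since, in the right-hand side, the subalgebra generated by the first $n$ generators is isomorphic to $L(r,n;A,D,\Gamma)$, while the new lowest (resp.\ highest) weight modules sit in its positive (resp.\ negative) graded parts.

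The main obstacle is the strict equivalence between the direct-sum pentad produced by chain rule and the pentad of Cartan type with the claimed block data. This reduces to bookkeeping with weights and pairings, and the delicate point is that the sign convention of Definition \ref{defn;hr4}, namely $\Box_{-D}^r(\epsilon_i\otimes f_j) = -d_{ij}f_j$, is precisely what is needed so that the vector $v^{-\lambda_i}$ --- which has $\mathfrak{h}^r$-weight $-\lambda_i$ --- is correctly identified with a basis element on the ``dual'' side of the augmented pentad of Cartan type, with no spurious sign in the new columns of $D$.
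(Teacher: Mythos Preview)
Your proof is correct and follows essentially the same approach as the paper: apply the chain rule (Theorem \ref{th;chain}) to reduce the left-hand side to an $\mathfrak{h}^r$-pentad, then identify that pentad with the pentad of Cartan type with the augmented $D$ and $\Gamma$. You are a bit more explicit than the paper in invoking Propositions \ref{pr;highlow_exists}, \ref{pr;p_g_n_g_oplus}, and \ref{pr;cartantype} to justify the intermediate identifications, and your closing remark about the sign convention in Definition \ref{defn;hr4} is a helpful sanity check, but the substance is the same.
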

\begin{proof}
From Theorem \ref{th;chain} and Lemma \ref{lem;highlowdual}, we have an isomorphism of Lie algebras:
\begin{align*}
&L\left (L(r,n;A,D,\Gamma ), \rho _{\lambda _1}\oplus \cdots \oplus \rho _{\lambda _k},V_{\lambda _1}\oplus \cdots \oplus V_{\lambda _k}, V^{-\lambda _1}\oplus \cdots \oplus  V^{-\lambda _k}, B_A^L\right )\\
&\quad \simeq 
L
\left (\mathfrak{h}^r,\Box_D^{\Gamma }\oplus \rho _{\lambda _1}\oplus \cdots \oplus \rho _{\lambda _k}, \mathbb{C}_D^{\Gamma }\oplus \mathbb{C}v_{\lambda _1}\oplus \cdots \oplus \mathbb{C}v_{\lambda _k}, \mathbb{C}_{-D}^{\Gamma }\oplus \mathbb{C}v^{-\lambda _1}\oplus \cdots \oplus \mathbb{C}v^{-\lambda _k}, B_A
\right )
\end{align*}
up to gradation.
We can assume that a canonical pairing $\langle v_{\lambda _i},v^{-\lambda _j}\rangle =\delta _{ij}$ for any $i,j$ without loss of generality.
Then, we have an equivalence of standard pentads:
\begin{align}
&
\left (\mathfrak{h}^r,\Box_D^{\Gamma }\oplus \rho _{\lambda _1}\oplus \cdots \oplus \rho _{\lambda _k}, \mathbb{C}_D^{\Gamma }\oplus \mathbb{C}v_{\lambda _1}\oplus \cdots \oplus \mathbb{C}v_{\lambda _k}, \mathbb{C}_{-D}^{\Gamma }\oplus \mathbb{C}v^{-\lambda _1}\oplus \cdots \oplus \mathbb{C}v^{-\lambda _k}, B_A
\right )\notag \\
&\quad \simeq 
P\left (
r,n+k;A,\left (\begin{array}{c|c}D&\begin{array}{ccc}\lambda _1(\epsilon _1)&\cdots &\lambda _k(\epsilon _1)\\ \vdots &\ddots &\vdots \\ \lambda _1(\epsilon _r)&\cdots &\lambda _k(\epsilon _r)\end{array}\end{array}\right ), \left (\begin{array}{c|c}\Gamma &0\\ \hline 0&I_k\end{array}\right )\label {eq2;chainstap}
\right ).
\end{align}
Thus, we have our claim.
\end{proof}
The pentads of the form (\ref{eq2;chainstap}) might not be regular.
But, in the special cases where $r=n$ and $P(r,r;A,D,\Gamma )$ is regular and symmetric, i.e. $L(r,r,A,D,\Gamma )=G(C(A,D,\Gamma ))$ with an invertible symmetric Cartan matrix, then we can construct a larger standard pentad from a representation $(L(r,r;A,D,\Gamma ), \rho _{\lambda _1}\oplus \cdots \oplus \rho _{\lambda _k})$ by adding suitable scalar multiplications (for detail, see Lemma \ref {lemma;lemma_contraemb} below).
For this, we need to prepare the following notation and result.
\begin{defn}\label {defn;fullscalar}
Let $\mathfrak{g}$ be a Lie algebra, $\rho _1,\ldots ,\rho _k$ representations of $\mathfrak{g}$ on $V_1,\ldots ,V_k$.
We define representations $(\rho _1\oplus \cdots \oplus \rho _k)^{\Box }$ and $(\rho _1\oplus \cdots \oplus \rho _k)^{-\Box }$ of $\mathfrak{gl}_1^k\oplus \mathfrak{g}$ on $V_1\oplus \cdots \oplus V_k$ by:
\begin{align*}
(\rho _1\oplus \cdots \oplus \rho _k)^{\Box }:&(\mathfrak{gl}_1^k\oplus \mathfrak{g})\otimes (V_1\oplus \cdots \oplus V_k)\rightarrow V_1\oplus \cdots \oplus V_k\\
&(c_1,\ldots ,c_k,A)\otimes (v_1,\ldots ,v_k)\mapsto (c_1v_1+\rho _1(A\otimes v_1),\ldots ,c_kv_k+\rho _k(A\otimes v_k)),\\
(\rho _1\oplus \cdots \oplus \rho _k)^{-\Box }:&(\mathfrak{gl}_1^k\oplus \mathfrak{g})\otimes (V_1\oplus \cdots \oplus V_k)\rightarrow V_1\oplus \cdots \oplus V_k\\
&(c_1,\ldots ,c_k,A)\otimes (v_1,\ldots ,v_k)\mapsto (-c_1v_1+\rho _1(A\otimes v_1),\ldots ,-c_kv_k+\rho _k(A\otimes v_k)).
\end{align*}
Moreover, when representations $\rho _1,\ldots ,\rho _k$ are irreducible, we say that the representation of the form $(\rho _1\oplus \cdots \oplus \rho _k)^{\Box }$ is a representation with full-scalar multiplications.
\end{defn}
The following proposition is clear.
\begin{pr}
Let $(\mathfrak{g}, \rho,V,{\cal V},B_0)$ be a standard pentad.
Then a pentad $(\mathfrak{gl}_1\oplus \mathfrak{g},\rho ^{\Box},V,{\cal V},B_0^c)=(\mathfrak{gl}_1\oplus \mathfrak{g},\rho ^{\Box},V,(\varrho ^{-\Box },{\cal V}),B_0^c)$ is also standard for any $c\in \mathbb{C}\setminus \{0\}$ where $B_0^c$ is a bilinear form on $\mathfrak{gl}_1\oplus \mathfrak{g}$ defined by:
$$
B_0^c((a,A),(a^{\prime },A^{\prime }))=caa^{\prime }+B_0(A,A^{\prime })\quad (a,a^{\prime }\in \mathfrak{gl}_1,\ A,A^{\prime }\in \mathfrak{g}).
$$
The $\Phi $-map $\Phi _{\rho ^{\Box }}^c$ of $(\mathfrak{gl}_1\oplus \mathfrak{g},\rho ^{\Box},V,{\cal V},B_0^c)$ is described by the $\Phi $-map $\Phi _{\rho }$ of $(\mathfrak{g}, \rho,V,{\cal V},B_0)$ as:
\begin{align*}
&\Phi _{\rho ^{\Box }}^c:V\otimes {\cal V}\rightarrow \mathfrak{gl}_1\oplus \mathfrak{g}&& v\otimes \phi \mapsto (\frac{1}{c}\langle v,\phi\rangle ,\Phi _{\rho }(v\otimes \phi )).&
\end{align*}
\end{pr}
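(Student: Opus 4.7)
The plan is to verify that the proposed pentad satisfies the two conditions SP1 and SP2 of Definition~\ref{defn;stap} by a direct computation, and then confirm that the stated $\Phi$-map formula satisfies the defining equation (\ref{defn;eq_stap_Phimap}). Since the original pentad $(\mathfrak{g},\rho,V,{\cal V},B_0)$ is assumed to be standard, most of the work amounts to checking that the extra $\mathfrak{gl}_1$-summand is compatible with the existing structure.

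First I would verify that $B_0^c$ is non-degenerate and invariant on $\mathfrak{gl}_1\oplus \mathfrak{g}$. Non-degeneracy follows because if $(a,A)$ pairs to zero against every $(a',A')$, one selects $A'=0$ to force $a=0$ (using $c\neq 0$) and then uses the non-degeneracy of $B_0$ to force $A=0$. Invariance is automatic: the bracket on $\mathfrak{gl}_1\oplus\mathfrak{g}$ is $[(a,A),(a',A')]=(0,[A,A'])$, so the $\mathfrak{gl}_1$-components never contribute and invariance reduces to invariance of $B_0$. I would also check that $\rho^{\Box}$ and $\varrho^{-\Box}$ are genuinely representations (commutativity of the scalar action with $\rho$ or $\varrho$ makes this immediate) and that ${\cal V}$ remains a $(\mathfrak{gl}_1\oplus\mathfrak{g})$-submodule of $\mathrm{Hom}(V,\mathbb{C})$ under $\varrho^{-\Box}$, which is clear because scalar multiplication preserves every subspace.

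Next, SP1 is inherited for free: the canonical pairing $V\times {\cal V}\to \mathbb{C}$ is unchanged, so its restriction is still non-degenerate. The only substantive point is SP2, and here I would simply verify that the proposed map $\Phi_{\rho^{\Box}}^c(v\otimes\phi)=\bigl(\tfrac{1}{c}\langle v,\phi\rangle, \Phi_{\rho}(v\otimes\phi)\bigr)$ satisfies (\ref{defn;eq_stap_Phimap}). Computing the left-hand side
\[
B_0^c\bigl((a,A),\Phi_{\rho^{\Box}}^c(v\otimes\phi)\bigr)=c\cdot a\cdot\tfrac{1}{c}\langle v,\phi\rangle + B_0(A,\Phi_{\rho}(v\otimes\phi))=a\langle v,\phi\rangle+\langle\rho(A\otimes v),\phi\rangle,
\]
and the right-hand side
\[
\langle\rho^{\Box}((a,A)\otimes v),\phi\rangle=\langle av+\rho(A\otimes v),\phi\rangle=a\langle v,\phi\rangle+\langle\rho(A\otimes v),\phi\rangle,
\]
gives equality. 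Uniqueness of the $\Phi$-map (guaranteed by non-degeneracy of $B_0^c$, as noted after Definition~\ref{defn;phimap}) then identifies $\Phi_{\rho^{\Box}}^c$ with the formula stated.

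There is really no obstacle here; the entire claim is a mechanical verification that the scalar factor $c$ in $B_0^c$ is precisely cancelled by the factor $1/c$ in the $\mathfrak{gl}_1$-component of the $\Phi$-map. The only place one must be careful is the sign convention in $\varrho^{-\Box}$, which I would check by computing $\langle v,\varrho^{-\Box}((a,A)\otimes\phi)\rangle=-\langle\rho^{\Box}((a,A)\otimes v),\phi\rangle$ directly and matching it with Definition~\ref{defn;fullscalar}.
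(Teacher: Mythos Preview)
Your proof is correct; in fact the paper does not supply a proof at all, simply declaring the proposition ``clear,'' so your direct verification of SP1, SP2, and the defining equation for the $\Phi$-map is exactly the routine check the paper leaves to the reader.
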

\begin{lemma}\label {lemma;lemma_contraemb}
Let $P(r,r;A,D,\Gamma )$ be a regular symmetric pentad of Cartan type.
Take arbitrary elements $\lambda _1,\ldots ,\lambda _k\in \mathrm {Hom } (\mathfrak{h}^r,\mathbb{C})$ and an arbitrary symmetric invertible matrix $\tilde{A}\in \mathrm {M}(k,k;\mathbb{C})$ and define a bilinear form $\tilde{B}$ on $\mathfrak{gl}_1^k$ by:
$$
\tilde{B}((\tilde{c}_1,\ldots ,\tilde{c}_k), (\tilde{c}_1^{\prime },\ldots ,\tilde{c}_k^{\prime }))=\begin{pmatrix}\tilde{c}_1&\cdots &\tilde{c}_k\end{pmatrix}\cdot {}^t \tilde{A}^{-1}\cdot \begin{pmatrix}\tilde{c}_1^{\prime }\\ \vdots \\ \tilde{c}^{\prime }_k\end{pmatrix}
$$
for any $\tilde{c}_1,\ldots ,\tilde{c}_k,\tilde{c}_1^{\prime },\ldots ,\tilde{c}_k^{\prime }$.
Then a $\mathfrak{gl}_1^k\oplus L(r,r;A,D,\Gamma )$-module $((\rho _{\lambda _1}\oplus \cdots \oplus \rho _{\lambda _k})^{\Box},V_{\lambda _1}\oplus \cdots \oplus V_{\lambda _k} )$ can be embedded into some contragredient Lie algebras:
\begin{align}
&L
\left (
\mathfrak{gl}_1^k\oplus L(r,r;A,D,\Gamma ), (\rho _{\lambda _1}\oplus \cdots \oplus \rho _{\lambda _k})^{\Box},V_{\lambda _1}\oplus \cdots \oplus V_{\lambda _k}, V^{-\lambda _1}\oplus \cdots \oplus V^{-\lambda _k}, \tilde{B}\oplus B_A^L
\right )\notag \\
&\quad 
\simeq 
L\left (r+k,r+k;\left (\begin{array}{c|c}\tilde{A}&O\\ \hline O&A\end{array}\right ),\left (\begin{array}{c|c}O&I_k \\ \hline D&\begin{array}{ccc}\lambda _1(\epsilon _1)&\cdots &\lambda _k(\epsilon _1)\\ \vdots &\ddots &\vdots \\ \lambda _1(\epsilon _r)&\cdots &\lambda _k(\epsilon _r)\end{array}\end{array}\right ),\left (\begin{array}{c|c}\Gamma &O\\ \hline O&I_k\end{array}\right )\right )\notag \\
&\quad 
\simeq G
\left (
\left (
\begin{array}{c|c}
C(A,D,\Gamma )&\Gamma \cdot {}^t D\cdot A\cdot \Lambda \\
\hline 
{}^t \Lambda 
\cdot A\cdot D &\tilde{A}+{}^t \Lambda \cdot A\cdot \Lambda 
\end{array}
\right )
\right )
\label {eq;lemma_contraemb}
\end{align}
where 
\begin{align}
\Lambda =\left (\begin{array}{ccc}\lambda _1(\epsilon _1)&\cdots &\lambda _k(\epsilon _1)\\ \vdots &\ddots &\vdots \\ \lambda _1(\epsilon _r)&\cdots &\lambda _k(\epsilon _r)\end{array}\right ).\label {eq;defLambda}
\end{align}
\end{lemma}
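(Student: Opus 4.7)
The plan is to prove the two isomorphisms in (\ref{eq;lemma_contraemb}) separately: the first by reinterpreting the iterated construction on the left as the output of chain rule applied to a single enlarged PC Lie algebra, and the second by verifying that the resulting Cartan matrix is invertible so that Theorem \ref{theo;1} applies.

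For the first isomorphism I would begin by realizing $\mathfrak{gl}_1^k\oplus L(r,r;A,D,\Gamma)$, equipped with the bilinear form $\tilde{B}\oplus B_A^L$, as a PC Lie algebra in its own right. Combining Proposition \ref{pr;d_sum_cartanpentads} with Theorem \ref{theo;1} yields the identification
$$
\mathfrak{gl}_1^k\oplus L(r,r;A,D,\Gamma)\simeq L\!\left(r+k,\,r;\,\begin{pmatrix}\tilde{A} & O\\ O & A\end{pmatrix},\,\begin{pmatrix}O_{k,r}\\ D\end{pmatrix},\,\Gamma\right),
$$
with the $\mathfrak{gl}_1^k$-summand appearing as the center of the right-hand side. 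Under this identification, each $V_{\lambda_i}$ with full-scalar action $(\rho_{\lambda_i})^{\Box}$ coincides with the unique irreducible lowest weight module (Proposition \ref{pr;highlow_exists}) of the enlarged PC Lie algebra whose lowest weight $\tilde{\lambda}_i\in\Hom(\mathfrak{h}^{r+k},\C)$ is determined by $\tilde{\lambda}_i(\epsilon_j^{\mathfrak{gl}_1})=\delta_{ij}$ and $\tilde{\lambda}_i(\epsilon_j^{\mathfrak{h}^r})=\lambda_i(\epsilon_j)$; the crucial observation here is that $\mathfrak{gl}_1^k$ lies entirely in degree $0$, so the positive subalgebra $\mathfrak{n}_+$ is the same in both Lie algebras and the notion of lowest weight module is unaffected. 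Theorem \ref{th;chainstap} then produces the first isomorphism in (\ref{eq;lemma_contraemb}), the new $D$-matrix being precisely the block matrix $\begin{pmatrix}O & I_k\\ D & \Lambda\end{pmatrix}$.

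For the second isomorphism I would compute the Cartan matrix of the new pentad by direct block multiplication, which gives exactly the matrix displayed in the right-hand side of (\ref{eq;lemma_contraemb}). Regularity then follows from a Schur complement calculation: the top-left block $C(A,D,\Gamma)$ is invertible by hypothesis, and moreover the square matrix $D$ is itself invertible by Corollary \ref{col1} {\rm (v)}; a short manipulation using $D^{-1}$ and $A^{-1}$ collapses the Schur complement of the top-left block to the invertible matrix $\tilde{A}$, so the whole Cartan matrix is invertible. Since the new pentad is then regular, Theorem \ref{theo;1} (applied with the two parameters $r$ and $n$ both equal to $r+k$) delivers the isomorphism with the contragredient Lie algebra $G$ of the stated Cartan matrix. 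The main technical obstacle is keeping the block orderings of $\mathfrak{gl}_1^k\oplus\mathfrak{h}^r$ and of the enlarged degree-$\pm 1$ spaces consistent throughout; one also needs to verify the symmetric-pentad hypothesis of Theorem \ref{th;chainstap}, which is immediate since both $\tilde{A}$ and $A$ are symmetric, but beyond these bookkeeping checks no new ideas are required.
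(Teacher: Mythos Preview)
Your proposal is correct and follows essentially the same route as the paper: chain rule (Theorem \ref{th;chainstap}) for the first isomorphism, then computation of the Cartan matrix and an application of Theorem \ref{theo;1} for the second. The only noteworthy difference is in how you check invertibility of the Cartan matrix: you take the Schur complement of the top-left block and reduce it to $\tilde{A}$, whereas the paper exploits the factored form $C=\Gamma'\cdot{}^tD'\cdot A'\cdot D'$ directly and observes that $\bigl|\det\begin{psmallmatrix}O&I_k\\ D&\Lambda\end{psmallmatrix}\bigr|=|\det D|\neq 0$, so every factor in the product is invertible. Both arguments are equally short; the paper's avoids ever expanding the product for the purpose of the determinant, while yours has the minor advantage of isolating $\tilde{A}$ explicitly as the obstruction.
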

\begin{proof}
The isomorphism of the first and second terms in (\ref {eq;lemma_contraemb}) can be proved by a similar argument to the argument in Theorem \ref{th;chainstap}.
Let us show the isomorphism of the second and third terms in (\ref {eq;lemma_contraemb}).
The Cartan matrix of a pentad of Cartan type corresponding to the second term is given by:
\begin{align}
&C \left (\left (\begin{array}{c|c}\tilde{A}&O\\ \hline O&A\end{array}\right ),\left (\begin{array}{c|c}O&I_k \\ \hline D&\begin{array}{ccc}\lambda _1(\epsilon _1)&\cdots &\lambda _k(\epsilon _1)\\ \vdots &\ddots &\vdots \\ \lambda _1(\epsilon _r)&\cdots &\lambda _k(\epsilon _r)\end{array}\end{array}\right ),\left (\begin{array}{c|c}\Gamma &O\\ \hline O&I_k\end{array}\right )\right )\notag \\
&\quad =
\left (\begin{array}{c|c}\Gamma &O\\ \hline O&I_k\end{array}\right )
\cdot 
\left (\begin{array}{c|c}O&{}^t D \\ \hline I_k&{}^t \Lambda \end{array}\right )
\cdot 
\left (\begin{array}{c|c}\tilde{A}&O\\ \hline O&A\end{array}\right )
\cdot 
\left (\begin{array}{c|c}O&I_k \\ \hline D&\Lambda \end{array}\right )
=
\left (
\begin{array}{c|c}
C(A,D,\Gamma )&\Gamma \cdot {}^t D\cdot A\cdot \Lambda \\
\hline 
{}^t \Lambda 
\cdot A\cdot D &\tilde{A}+{}^t \Lambda \cdot A\cdot \Lambda 
\end{array}
\right )\label {eq;cartanlambda}
\end{align}
From the assumption that $C(A,D,\Gamma )\in \mathrm {M}(r,r;\mathbb{C})$ is invertible, we have that the square matrix $D\in \mathrm {M}(r,r;\mathbb{C})$ is invertible and that
$$
\left |\det \left (\begin{array}{c|c}O&I_k \\ \hline D&\begin{array}{ccc}\lambda _1(\epsilon _1)&\cdots &\lambda _k(\epsilon _1)\\ \vdots &\ddots &\vdots \\ \lambda _1(\epsilon _r)&\cdots &\lambda _k(\epsilon _r)\end{array}\end{array}\right )\right |=|\det D|\neq 0.
$$
Thus, we can deduce that the matrix (\ref{eq;cartanlambda}) is invertible.
Therefore, we have the isomorphism of the second and third terms from Theorem \ref{theo;1}.
This completes the proof.
\end{proof}
This lemma will be used in the next section to study finite-dimensional reductive Lie algebras and its representations.

\subsection {Finite-dimensional reductive Lie algebras and chain rule}
We have seen that an arbitrary contragredient Lie algebra with an invertible Cartan matrix is isomorphic to a PC Lie algebra with a regular pentad of Cartan type (Theorem \ref {th;contraPC}).
Similarly, we can show that an arbitrary finite-dimensional reductive Lie algebra is isomorphic to a PC Lie algebra with a regular and symmetric pentad of Cartan type.
Using this fact, we can find the structure of a Lie algebra $L(\g,\rho,V,\Hom (V,\C),B)$ for a finite-dimensional reductive Lie algebra $\g $ under some assumptions.
The aim of this section is to explain how to describe these Lie algebraic structures.
\begin{lemma}\label{lem;4}
Any finite-dimensional semisimple Lie algebra is a PC Lie algebra with a regular pentad of Cartan type.
\end{lemma}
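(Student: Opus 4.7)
The plan is to reduce the semisimple case to the situation handled by Lemma \ref{lemma;parabolic} (the $\theta=\emptyset$ case of the parabolic construction) and then verify that the resulting pentad of Cartan type has invertible Cartan matrix.

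First, I would fix a Cartan subalgebra $\mathfrak{h}\subset\mathfrak{g}$ and a fundamental system $\psi=\{\alpha_1,\ldots,\alpha_r\}$ of the corresponding root system, where $r=\dim\mathfrak{h}$ equals the rank. Lemma \ref{lemma;parabolic} gives an isomorphism of graded Lie algebras
\[
\mathfrak{g}\simeq L\!\left(\mathfrak{h},\ad,\bigoplus_{\alpha\in\psi}\mathbb{C}e_\alpha,\bigoplus_{\alpha\in\psi}\mathbb{C}e_{-\alpha},K_{\mathfrak{g}}\right).
\]
The pentad on the right satisfies the hypotheses of Proposition \ref{pr;cartantype}: $\mathfrak{h}$ is finite-dimensional commutative, and $\bigoplus_{\alpha\in\psi}\mathbb{C}e_\alpha$ is a finite-dimensional $\mathfrak{h}$-module on which $\mathfrak{h}$ acts diagonally (each $e_\alpha$ is a simultaneous eigenvector). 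Hence this pentad is equivalent to some pentad of Cartan type $P(r,r;A,D,\Gamma)$, and by Proposition \ref{pr;stapequi_lieiso} the associated Lie algebras are isomorphic. So it remains only to check regularity.

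Next I would make the matrices explicit. Choose any basis $\{\epsilon_1,\ldots,\epsilon_r\}$ of $\mathfrak{h}$ corresponding to the identification $\mathfrak{h}\simeq\mathfrak{h}^r$. Under this choice the $(i,j)$-entry of $D$ is the eigenvalue of $\ad\epsilon_i$ on $e_{\alpha_j}$, that is, $d_{ij}=\alpha_j(\epsilon_i)$. In other words, the columns of $D$ are the coordinate vectors of $\alpha_1,\ldots,\alpha_r\in\mathfrak{h}^*$ with respect to the dual basis of $\{\epsilon_1,\ldots,\epsilon_r\}$. Because $\psi$ is a fundamental system of the root system of the semisimple Lie algebra $\mathfrak{g}$, the roots $\alpha_1,\ldots,\alpha_r$ form a basis of $\mathfrak{h}^*$; therefore $D$ is invertible. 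The matrix $A$ comes from the Killing form restricted to $\mathfrak{h}$ via Definition \ref{defn;hr3}, and is invertible because $K_{\mathfrak{g}}|_{\mathfrak{h}\times\mathfrak{h}}$ is non-degenerate (a standard fact for semisimple $\mathfrak{g}$). Finally $\Gamma$ is invertible by construction of a pentad of Cartan type. Consequently
\[
C(A,D,\Gamma)=\Gamma\cdot{}^tD\cdot A\cdot D
\]
is a product of invertible matrices, hence invertible, so $P(r,r;A,D,\Gamma)$ is regular and
\[
\mathfrak{g}\simeq L(r,r;A,D,\Gamma)
\]
as required.

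The only step that requires any real argument is the invertibility of $D$; everything else is a matter of unwinding definitions and citing Lemma \ref{lemma;parabolic} and Proposition \ref{pr;cartantype}. I do not expect a serious obstacle: $D$ is the matrix of fundamental roots evaluated on a basis of $\mathfrak{h}$, and for a semisimple Lie algebra a fundamental system is by definition a basis of $\mathfrak{h}^*$, so linear independence is immediate. (Alternatively, if one wanted to avoid direct identification of $D$, one could instead use Proposition \ref{pr;d_sum_cartanpentads} and Proposition \ref{pr;d_sum_cartan} to reduce to the simple case and then quote the well-known invertibility of the classical Cartan matrix, but the direct computation above is cleaner.)
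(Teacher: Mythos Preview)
Your proof is correct and follows the same overall strategy as the paper: both start from Lemma \ref{lemma;parabolic} to realize $\mathfrak{g}$ as the Lie algebra associated with the pentad $(\mathfrak{h},\ad,\bigoplus_{\alpha\in\psi}\mathbb{C}e_\alpha,\bigoplus_{\alpha\in\psi}\mathbb{C}e_{-\alpha},K_{\mathfrak{g}})$ and then identify this with a pentad of Cartan type. The difference lies in how regularity is checked. You argue abstractly that $D$, $A$, and $\Gamma$ are each invertible (fundamental roots span $\mathfrak{h}^*$, the Killing form is non-degenerate on $\mathfrak{h}$, and $\Gamma$ is invertible by definition), so their product $C(A,D,\Gamma)$ is invertible. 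The paper instead fixes the specific basis $\epsilon_i=h_{\alpha_i}$ of coroots, computes explicitly that $D=X_l$, $\Gamma=\diag(2/(\alpha_i,\alpha_i))$, $A=(X_l\Gamma)^{-1}$, and then verifies directly that $C(A,D,\Gamma)=X_l$, the classical Cartan matrix. Your route is cleaner for the lemma as stated; the paper's explicit identification $C(A,D,\Gamma)=X_l$ is what is actually reused downstream in Theorem \ref{th;red_cartanstap} and especially in Theorem \ref{theo;simple_dom_emb}, where the precise form of the pentad (not merely its regularity) enters the formulas.
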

This Lemma is immediate from Theorem \ref {th;contraPC} and a well-known fact that a Cartan matrix of a finite-dimensional semisimple Lie algebra is invertible (see, for example, \cite [Theorem 4.3, Proposition 4.9]{ka-2}).
Moreover, Theorem \ref{th;red_cartanstap} appeared below is also.
However, we shall give other proofs of these for our aim of this section.
Indeed, in order to describe the structure of $L(\g,\rho,V,\Hom (V,\C),B)$ ($\g $ is finite-dimensional reductive Lie algebra), we need to construct a reductive Lie algebra using fundamental system and the theory of standard pentads.
\begin{proof}[Proof of Lemma \ref{lem;4}]
Let us give a proof of Lemma \ref{lem;4} using Lemma \ref{lemma;parabolic}.
Let $L(X_l)$ be a semisimple Lie algebra with a Cartan matrix $X_l$, $\mathfrak{h}$ a Cartan subalgebra of $L(X_l)$, $R$ the root system of $(L(X_l),\mathfrak{h})$, $\psi =\{\alpha _1,\ldots ,\alpha _l\}$ a fundamental system of $R$.
Denote the Killing form of $L(X_l)$ by $K_{X_l}$.
For any root $\alpha \in R$, we denote the coroot vector of $\alpha $ by $t_{\alpha }\in \mathfrak{h}$, i.e. $K_{X_l}(h,t_{\alpha })=\alpha (h)$ for any $h\in \mathfrak{h}$.
Put $h_{\alpha }=2t_{\alpha }/(\alpha ,\alpha )$ where $(\cdot ,\cdot )$ is a bilinear form on $\mathrm {Hom }(\mathfrak{h},\mathbb{C})\times \mathrm {Hom }(\mathfrak{h},\mathbb{C})$ defined by $(\gamma ,\gamma ^{\prime } )=K_{X_l}(t_{\gamma },t_{\gamma })$ $(\gamma ,\gamma ^{\prime }\in R)$, i.e. $\alpha (h_{\alpha })=2$, and take non-zero root vectors $e_{\alpha }$ and $e_{-\alpha }$ of $\pm \alpha \in R$ such that $[e_{\alpha },e_{-\alpha }]=h_{\alpha }$.
Then, by Lemma \ref{lemma;parabolic}, we have an isomorphism 
\begin{align}
L(X_l)\simeq L(\mathfrak{h},\ad,\bigoplus _{\alpha _i\in \psi }\mathbb{C}e_{\alpha _i},\bigoplus _{\alpha _i\in \psi }\mathbb{C}e_{-\alpha _i},K_{X_l}).\label {eq;fss_PC}
\end{align}
Let us find a pentad of Cartan type which is equivalent to $(\mathfrak{h},\ad,\bigoplus _{\alpha _i\in \psi }\mathbb{C}e_{\alpha _i},\bigoplus _{\alpha \in \psi }\mathbb{C}e_{-\alpha _i},K_{X_l})$.
It is well-known that $\{h_{\alpha _i} \mid \alpha _i\in \psi,\ i=1,\ldots ,l \}$ and $\psi $ are respectively bases of the $\mathbb{C}$-vector spaces $\mathfrak{h}$ and  $\mathrm {Hom }(\mathfrak{h},\mathbb{C})$.
Moreover, it is also well-known that the Cartan matrix $X_l$ is written using the bilinear form $(\cdot, \cdot)$, that is, $X_l=(2(\alpha _i,\alpha _j)/(\alpha _i,\alpha _i))_{ij}$.
Put 
$$
e_i=e_{\alpha _i},\quad f_i=e_{-\alpha _i},\quad \epsilon _i=h_{\alpha _i}\quad (i=1,\ldots ,r)
$$
and $\Gamma =\diag (2/(\alpha _1,\alpha _1),\ldots ,2/(\alpha _l,\alpha _l))$.
Then we have equations
\begin{align*}
&[\epsilon _i,e_j]=\frac{2(\alpha _i,\alpha _j)}{(\alpha _i,\alpha _i)}e_j,\quad [\epsilon _i,f_j]=-\frac{2(\alpha _i,\alpha _j)}{(\alpha _i,\alpha _i)}f_j,\\
& K_{X_l}(\epsilon _i,\epsilon _j)=\frac{4(\alpha _i,\alpha _j)}{(\alpha _i,\alpha _i)(\alpha _j,\alpha _j)},\quad K_{X_l}(e_i,f_j)=\delta _{i,j}\frac{2}{(\alpha _i,\alpha _i)}\quad \text{for any $1\leq i,j\leq l$.}
\end{align*}
If we put
$$
X_l^{\prime }= X_l\cdot \Gamma =\left (\frac{4(\alpha _i,\alpha _j)}{(\alpha _i,\alpha _i)(\alpha _j,\alpha _j)}\right )_{1\leq i,j\leq l},
$$
then we have the symmetric matrix $X_l^{\prime }$ and an equivalence of standard pentads
\begin{align}
(\mathfrak{h}(X_l),\ad,\bigoplus _{\alpha _i\in \psi }\mathbb{C}e_{\alpha _i},\bigoplus _{\alpha \in \psi }\mathbb{C}e_{-\alpha _i},K_{X_l})\simeq P(l,l;{}^t(X_l^{\prime })^{-1},X_l,\Gamma)=P(l,l;(X_l^{\prime })^{-1},X_l,\Gamma).\label {eq;simplestap}
\end{align}
Thus, we have an isomorphism of Lie algebras up to gradation:
$$
L(X_l)\simeq L(\mathfrak{h},\ad,\bigoplus _{\alpha _i\in \psi }\mathbb{C}e_{\alpha _i},\bigoplus _{\alpha _i\in \psi }\mathbb{C}e_{-\alpha _i},K_{X_l})\simeq L(l,l;(X_l^{\prime })^{-1},X_l,\Gamma).
$$
Thus, we have our claim.
\end{proof}
\begin{remark}
The Cartan matrix of the pentad $P(l,l;{}^t(X_l^{\prime })^{-1},X_l,\Gamma )$ is given by
$$
C({}^t(X_l^{\prime })^{-1},X_l,\Gamma )=\Gamma \cdot {}^tX_l\cdot {}^t (X_l^{\prime })^{-1}\cdot X_l=\Gamma \cdot {}^tX_l\cdot {}^t X_l ^{-1} \cdot \Gamma ^{-1}\cdot X_l=X_l.
$$
\end{remark}
Using Theorem \ref{theo;1} and Lemma \ref{lem;4}, we can construct an arbitrary finite-dimensional reductive Lie algebra from a pentad of Cartan type as follows.
\begin{theo}\label {th;red_cartanstap}
Any finite-dimensional reductive Lie algebra is a PC Lie algebra with a regular and symmetric pentad of Cartan type.
\end{theo}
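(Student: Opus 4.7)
The plan is to reduce the statement to Lemma \ref{lem;4} by decomposing the reductive Lie algebra $\mathfrak{g}$ into its semisimple derived subalgebra and its centre, and then absorbing the latter into a single enlarged pentad of Cartan type. Write $\mathfrak{g} = \mathfrak{g}_{ss} \oplus \mathfrak{z}$, where $\mathfrak{g}_{ss} = [\mathfrak{g}, \mathfrak{g}]$ is semisimple and $\mathfrak{z}$ is the centre of dimension $m := \dim \mathfrak{z}$. By Lemma \ref{lem;4} (more precisely, by the construction in its proof), there exist a positive integer $l$, an invertible Cartan matrix $X_l \in \mathrm{M}(l,l;\mathbb{C})$ for $\mathfrak{g}_{ss}$, an invertible diagonal matrix $\Gamma \in \mathrm{M}(l,l;\mathbb{C})$, and a symmetric invertible matrix $X_l' = X_l \cdot \Gamma$ such that $\mathfrak{g}_{ss} \simeq L(l, l; (X_l')^{-1}, X_l, \Gamma)$, the corresponding pentad being regular and symmetric.

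To handle the centre, I would enlarge this pentad by $m$ extra commuting directions on which the representation acts trivially. Concretely, set $r := l + m$, $n := l$, $\tilde{\Gamma} := \Gamma$, and define the block matrices
\[
\tilde{A} := \left (\begin{array}{c|c} (X_l')^{-1} & O \\ \hline O & I_m \end{array}\right ), \qquad
\tilde{D} := \left (\begin{array}{c} X_l \\ \hline O \end{array}\right ).
\]
Since $(X_l')^{-1}$ is symmetric (because $X_l'$ is), the matrix $\tilde{A}$ is symmetric and invertible; consequently $P(l+m, l; \tilde{A}, \tilde{D}, \tilde{\Gamma})$ is a symmetric pentad of Cartan type.

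The only real computation is the block evaluation of the Cartan matrix, which mirrors the remark following Lemma \ref{lem;4}:
\[
C(\tilde{A}, \tilde{D}, \tilde{\Gamma}) = \tilde{\Gamma} \cdot {}^t \tilde{D} \cdot \tilde{A} \cdot \tilde{D} = \Gamma \cdot {}^t X_l \cdot (X_l')^{-1} \cdot X_l = X_l,
\]
where the last equality uses that $X_l' = X_l \cdot \Gamma$ is symmetric, so $\Gamma \cdot {}^t X_l = X_l \cdot \Gamma$. Hence the enlarged pentad is regular. Applying Theorem \ref{theo;1} then yields
\[
L(l+m, l; \tilde{A}, \tilde{D}, \tilde{\Gamma}) \simeq \mathfrak{gl}_1^{m} \oplus G(X_l) \simeq \mathfrak{z} \oplus \mathfrak{g}_{ss} \simeq \mathfrak{g},
\]
which is the desired conclusion. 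The whole argument is essentially bookkeeping on top of Lemma \ref{lem;4}: the only step requiring genuine attention is checking that padding by the block $I_m$ does not perturb the Cartan matrix, and the display above settles this at once. I do not anticipate any serious obstacle.
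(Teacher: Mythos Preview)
Your proof is correct and follows essentially the same approach as the paper: decompose $\mathfrak{g}$ into centre plus semisimple part, invoke Lemma~\ref{lem;4} for the latter, and pad the resulting pentad by an identity block to absorb the centre, obtaining a regular symmetric pentad with Cartan matrix $X_l$. The only cosmetic differences are that the paper orders the blocks with the centre first (writing $I_k$ before $(X_l')^{-1}$) and routes the argument through the direct-sum machinery of Proposition~\ref{pr;d_sum_stap} before identifying the final pentad, whereas you write the enlarged pentad down directly and appeal to Theorem~\ref{theo;1}; neither difference is substantive.
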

\begin{proof}
Let $\mathfrak{g}$ be an arbitrary finite-dimensional reductive Lie algebra.
Then $\mathfrak{g}=Z(\mathfrak{g})\oplus [\mathfrak{g},\mathfrak{g}]$, where $Z(\mathfrak{g})$ is the center part of $\mathfrak{g}$.
Put $k=\dim Z(\mathfrak{g})$ and $X_l$ the Cartan matrix of $[\mathfrak{g},\mathfrak{g}]$.
Then, under the notation of proof of Lemma \ref {lem;4}, we have an isomorphism of Lie algebras:
\begin{align}
&\mathfrak{g}\simeq \mathfrak{gl}_1^k\oplus L(X_l)\simeq \mathfrak{gl}_1^k\oplus L\left (\mathfrak{h},\ad,\bigoplus _{\alpha \in \psi }\mathbb{C}e_{\alpha },\bigoplus _{\alpha \in \psi }\mathbb{C}e_{-\alpha },K_{X_l}\right )\notag \\
&\quad \simeq L\left (\mathfrak{gl}_1^k,\text {$0$-representation},\{0\},\{0\},B_{I_k}\right )\oplus L\left (\mathfrak{h},\ad,\bigoplus _{\alpha \in \psi }\mathbb{C}e_{\alpha },\bigoplus _{\alpha \in \psi }\mathbb{C}e_{-\alpha },K_{X_l}\right )\notag \\
&\quad \simeq L\left (\left (\mathfrak{gl}_1^k,\text {$0$-representation},\{0\},\{0\},B_{I_k}\right )\oplus \left (\mathfrak{h},\ad,\bigoplus _{\alpha \in \psi }\mathbb{C}e_{\alpha },\bigoplus _{\alpha \in \psi }\mathbb{C}e_{-\alpha },K_{X_l}\right )\right )\notag \\
&\quad \simeq L\left (\mathfrak{gl}_1^k\oplus \mathfrak{h},\ad,\bigoplus _{\alpha \in \psi }\mathbb{C}e_{\alpha },\bigoplus _{\alpha \in \psi }\mathbb{C}e_{-\alpha },B_{I_k}\oplus K_{X_l}\right )\label  {eq;isored}
\end{align}
where $B_{I_k}$ is a non-degenerate symmetric bilinear form on $\mathfrak{gl}_1^k$ defined by:
\begin{align}
B_{I_k}\left ((c_1,\ldots ,c_k),(c_1^{\prime },\ldots ,c_k^{\prime })\right )=\begin{pmatrix}c_1&\cdots c_k\end{pmatrix}\cdot I_k^{-1} \cdot \begin{pmatrix}c_1^{\prime } \\ \vdots \\ c_k^{\prime }\end{pmatrix}=c_1c_1^{\prime }+\cdots +c_kc_k^{\prime }.
\end{align}
Then, by a similar argument to the argument in proof of Lemma \ref {lem;4}, we have an equivalence of symmetric standard pentads:
\begin{align}
\left (\mathfrak{gl}_1^k\oplus \mathfrak{h},\ad,\bigoplus _{\alpha \in \psi }\mathbb{C}e_{\alpha },\bigoplus _{\alpha \in \psi }\mathbb{C}e_{-\alpha },B_{I_k}\oplus K_{X_l}\right )
\simeq 
P\left (k+l,l;
\left (\begin{array}{c|c}I_k&O\\ \hline O&(X_l^{\prime })^{-1}\end{array}\right ), 
\left (\begin{array}{c}O\\ \hline X_l\end{array}\right ), 
\Gamma 
\right )\label {eq;finred_pentad}
\end{align}
whose Cartan matrix is $X_l$.
From (\ref {eq;isored}) and (\ref {eq;finred_pentad}), we have an isomorphism:
\begin{align}
\mathfrak{g}
\simeq 
L\left (k+l,l;
\left (\begin{array}{c|c}I_k&O\\ \hline O&(X_l^{\prime })^{-1}\end{array}\right ), 
\left (\begin{array}{c}O\\ \hline X_l\end{array}\right ), 
\Gamma 
\right ).\label {eq;isored2}
\end{align}
This completes the proof.
\end{proof}
Using the isomorphism (\ref {eq;isored2}), we can embed a finite-dimensional reductive Lie algebra and its finite-dimensional representation with full-scalar multiplications (in the sense of Definition \ref {defn;fullscalar}) into some contragredient Lie algebra.
Recall that an irreducible finite-dimensional representation of a finite-dimensional semisimple Lie algebra is written by its ``highest weight'' in the sense of ordinary Lie theory (see, for example, \cite [Chapter 8, \S 6 and \S 7]{bu-1}, in particular \cite [Chapter 8, \S 6, no.2 Lemma 2, p.118]{bu-1}).
Similarly, to describe an irreducible finite-dimensional module of a finite-dimensional semisimple Lie algebra, we can use its ``lowest weight'' instead of its highest weight.
\par
The ``highest/lowest weight module description'' in the sense of ordinary Lie theory induces the ``highest/lowest weight module description'' in the sense of PC Lie algebras, Definition \ref  {defn;highest_module}.
If we retain to use the notations in proof of Lemma \ref {lem;4}
then an arbitrary irreducible finite-dimensional $L(X_l)$-module $V$ has an element $v_{\Lambda }\in V$ and a linear map $\Lambda \in \mathrm {Hom }(\mathfrak{h},\mathbb{C})$ satisfying
\begin{itemize}
\item {$\rho (h\otimes v_{\Lambda })=\Lambda (h)v_{\Lambda }$ for any $h\in \mathfrak{h}$,}
\item {$V$ is generated by $\mathbb{C}v_{\Lambda }$ and root spaces of $\alpha \in \psi $,}
\item {$\Lambda -\alpha $ ($\alpha \in \psi $) is not a weight of $V$,}
\end{itemize}
where $\Lambda $ is the lowest weight and $v_{\Lambda }$ is a non-zero lowest weight vector of $V$ in the sense of ordinary Lie theory.
Then, from Definition \ref {defn;highest_module} and Proposition \ref {pr;highlow_exists}, we have that an $L(X_l)$-module $V$ is the lowest weight module in the sense of PC Lie algebras with lowest weight $\Lambda $ and that $V$ is isomorphic to the positive extension of a $1$-dimensional $\mathfrak{h}$-module $\mathbb{C}v_{\Lambda }$ with respect to $(\mathfrak{h},\ad, \bigoplus _{\alpha \in \psi }\mathbb{C}e_{\alpha _i},\bigoplus _{\alpha \in \psi }\mathbb{C}e_{-\alpha _i},K_{X_l})$.
\par 
Let $\rho _{\Lambda _1},\ldots ,\rho _{\Lambda _k}$ ($\Lambda _1,\ldots ,\Lambda _k\in \mathrm {Hom }(\mathfrak{h},\mathbb{C})$) be the finite-dimensional representations of $L(X_l)$ with lowest weight $\Lambda _i$.
Then the elements $h_{\alpha }=2t_{\alpha }/(\alpha ,\alpha )$ for $\alpha \in \psi $ (see proof of Lemma \ref {lem;4}) satisfy that each $\Lambda _i(h_{\alpha })$ is $0$ or negative integer.
Put 
$$
(\Lambda )=\left (\begin{array}{ccc}-n_{11}&\cdots &-n_{1k}\\ \vdots &\ddots &\vdots \\ -n_{l1}&\cdots &-n_{lk}\end{array}\right ),\quad \Lambda _j(h_{\alpha _i})=-n_{ij}\in \mathbb{Z}_{\leq 0}.
$$
Using these notations, we have the following theorem.
\begin{theo}\label {theo;simple_dom_emb}
We have the following isomorphisms for any invertible symmetric matrix $A_Z\in \mathrm {M}(k,k;\mathbb{C})$:
\begin{align}
&L(L(X_l),\rho _{\Lambda _1}\oplus \cdots \oplus \rho _{\Lambda _k},V_{\Lambda _1}\oplus \cdots \oplus V_{\Lambda _k},V^{-\Lambda _1}\oplus \cdots \oplus  V^{-\Lambda _k},K_{X_l})\notag \\
&\quad \simeq
L\left (
l,k+l;(X_l^{\prime })^{-1}, \left (\begin{array}{c|c}X_l&\begin{array}{ccc}-n_{11}&\cdots &-n_{1k}\\ \vdots &\ddots &\vdots \\ -n_{l1}&\cdots &-n_{lk}\end{array}\end{array}\right ), \left (\begin{array}{c|c}\Gamma &O\\ \hline O&I_k\end{array}\right )
\right ),\label{eq;fred1}\\
&L(\mathfrak{gl}_1^k\oplus L(X_l),(\rho _{\Lambda _1}\oplus \cdots \oplus \rho _{\Lambda _k})^{\Box },V_{\Lambda _1}\oplus \cdots \oplus V_{\Lambda _k},V^{-\Lambda _1}\oplus \cdots \oplus  V^{-\Lambda _k},B_{A_Z})\notag \\
&\quad \simeq
L\left (
k+l,k+l;\left (\begin{array}{c|c}A_Z&O\\ \hline O&(X_l^{\prime })^{-1}\end{array}\right ), \left (\begin{array}{c|c}O&I_k \\ \hline X_l&\begin{array}{ccc}-n_{11}&\cdots &-n_{1k}\\ \vdots &\ddots &\vdots \\ -n_{l1}&\cdots &-n_{lk}\end{array}\end{array}\right ), \left (\begin{array}{c|c}\Gamma &O\\ \hline O&I_k\end{array}\right )
\right )\notag \\
&\quad \simeq 
G\left (
\left (
\begin{array}{c|c}
X_l& (\Lambda )\\
\hline 
{}^t (\Lambda )
\cdot \Gamma ^{-1} &A_Z+{}^t (\Lambda )\cdot \Gamma ^{-1}\cdot X_l^{-1}\cdot (\Lambda )
\end{array}
\right )
\right )\label{eq;fred2}
\end{align}
where $B_{A_Z}$ is a non-degenerate symmetric invariant bilinear form on $\mathfrak{gl}_1^k\oplus L(X_l)$ defined by:
$$
B_{A_Z}((c_1,\ldots ,c_k,A),(c_1^{\prime },\ldots ,c_k^{\prime },A^{\prime }))=\begin{pmatrix}c_1&\cdots &c_k\end{pmatrix}\cdot A_Z^{-1}\cdot \begin{pmatrix}c_1^{\prime }\\ \vdots \\ c_k^{\prime }\end{pmatrix}+K_{X_l}(A,A^{\prime }).
$$
\end{theo}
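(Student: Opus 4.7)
The plan is to deduce both isomorphisms by feeding the PC Lie algebra presentation of $L(X_l)$ (from Lemma \ref{lem;4}) together with the lowest weight modules $V_{\Lambda_i}$ into the two embedding tools already established, namely Theorem \ref{th;chainstap} (chain rule for PC Lie algebras and their lowest weight modules) and Lemma \ref{lemma;lemma_contraemb} (adding full-scalar multiplications).

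The first step is to fix the identification. By Lemma \ref{lem;4}, the equivalence $L(X_l) \simeq L(l,l;(X_l^{\prime})^{-1},X_l,\Gamma)$ sends $\epsilon_i \mapsto h_{\alpha_i}$, so for any $\Lambda \in \Hom(\h,\C)$ the values $\Lambda(\epsilon_i) = \Lambda(h_{\alpha_i})$ are precisely the integers $-n_{ij}$ appearing in the matrix $(\Lambda)$. Next, I would check that each finite-dimensional irreducible representation with ordinary lowest weight $\Lambda_i$ is in fact a lowest weight module in the sense of Definition \ref{defn;highest_module}: a lowest weight vector $v_{\Lambda_i}$ in the ordinary sense is annihilated by all $\mathfrak{n}_+$ (since the positive root spaces are generated by $V_1 = \bigoplus_\alpha \C e_\alpha$), generates $V_{\Lambda_i}$ together with $\mathfrak{n}_-$, and the representation is irreducible, hence by Proposition \ref{pr;highlow_exists} it coincides with the positive extension of $\C v_{\Lambda_i}$. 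Consequently $V_{\Lambda_i}$ is the PC lowest weight module $V_{\Lambda_i}$ and $V^{-\Lambda_i}$ its dual highest weight module.

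Step two is the first isomorphism (\ref{eq;fred1}). The pentad $P(l,l;(X_l^{\prime})^{-1},X_l,\Gamma)$ is symmetric (since $(X_l^{\prime})^{-1}$ is symmetric), so Theorem \ref{th;chainstap} applies to the data $(\rho_{\Lambda_1}\oplus\cdots\oplus\rho_{\Lambda_k}, V_{\Lambda_1}\oplus\cdots, V^{-\Lambda_1}\oplus\cdots)$. The right-hand side of (\ref{eq;chainstap}) then yields a PC Lie algebra with parameters $r=l$, $n+k=l+k$, matrix $A=(X_l^{\prime})^{-1}$, and new data $D$ and $\Gamma$ obtained by appending the column matrix $(\Lambda_j(\epsilon_i))_{ij}=(\Lambda)$ to $X_l$ and adjoining $I_k$ to $\Gamma$. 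This is precisely (\ref{eq;fred1}).

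For the second isomorphism (\ref{eq;fred2}), I would apply Lemma \ref{lemma;lemma_contraemb} with $A=(X_l^{\prime})^{-1}$, $D=X_l$, $\tilde A=A_Z$, and $\lambda_i=\Lambda_i$. The middle line of (\ref{eq;fred2}) is then immediate from the lemma, and the passage to the contragredient algebra $G(\cdot)$ reduces to the block Cartan matrix computation at the end of the lemma's proof. The one routine calculation to carry out is the simplification using $X_l^{\prime}=X_l\cdot\Gamma$, hence $(X_l^{\prime})^{-1}=\Gamma^{-1}\cdot X_l^{-1}$ and $\Gamma\cdot{}^tX_l=X_l^{\prime}$: the top-left block becomes $\Gamma\cdot{}^tX_l\cdot(X_l^{\prime})^{-1}\cdot X_l=X_l$, the top-right block becomes $\Gamma\cdot{}^tX_l\cdot(X_l^{\prime})^{-1}\cdot(\Lambda)=(\Lambda)$, the bottom-left block becomes ${}^t(\Lambda)\cdot\Gamma^{-1}$, and the bottom-right block becomes $A_Z+{}^t(\Lambda)\cdot\Gamma^{-1}\cdot X_l^{-1}\cdot(\Lambda)$, matching (\ref{eq;fred2}).

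The main obstacle is conceptual rather than technical: one must verify cleanly that the ordinary Lie-theoretic lowest weight modules of $L(X_l)$ coincide with the PC-theoretic lowest weight modules of $L(l,l;(X_l^{\prime})^{-1},X_l,\Gamma)$ so that the hypotheses of Theorem \ref{th;chainstap} and Lemma \ref{lemma;lemma_contraemb} are met. Once this identification is in place, the rest is bookkeeping with block matrices.
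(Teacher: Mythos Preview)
Your proposal is correct and follows essentially the same route as the paper: identify $L(X_l)$ with the PC Lie algebra $L(l,l;(X_l')^{-1},X_l,\Gamma)$ via Lemma~\ref{lem;4}, then apply Theorem~\ref{th;chainstap} for (\ref{eq;fred1}) and Lemma~\ref{lemma;lemma_contraemb} for (\ref{eq;fred2}), finishing with the block-matrix simplification using $X_l'=X_l\cdot\Gamma$. One small slip to fix: in your check that $V_{\Lambda_i}$ is a PC lowest weight module, the lowest weight vector is annihilated by $\mathfrak{n}_-$ (not $\mathfrak{n}_+$) and generates $V_{\Lambda_i}$ together with $\mathfrak{n}_+$ (not $\mathfrak{n}_-$); see Definition~\ref{defn;highest_module} and the paragraph preceding the theorem.
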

\begin{proof}
To have the isomorphism (\ref {eq;isored2}), we can use $A_Z$ instead of $I_k$.
Then, our claim follows from Theorem \ref  {th;chainstap}, Lemma \ref {lemma;lemma_contraemb} and Theorem \ref {th;red_cartanstap}.
\end{proof}
The Lie algebras of the form (\ref {eq;fred1}) are non-regular PC Lie algebras (see Proposition \ref {pr;carmat_noninv}).
That is, we can say that any semisimple Lie algebra and its finite-dimensional representation can be embedded into some non-regular PC Lie algebra.
As an application, we can construct loop algebras as non-regular PC Lie algebras.
Indeed, for any simple Lie algebra $\g $, the corresponding loop algebra $\C[t,t^{-1}]\otimes \g\simeq L(\g,\ad,\g,\g,K_{\g })$ is isomorphic to some non-regular PC Lie algebra (cf. Examples \ref {ex;4}, \ref {ex;5}).
\par 
On the other hand, the Lie algebras of the form (\ref {eq;fred2}) are regular PC Lie algebras.
That is, we can say that the research of finite-dimensional representations of finite-dimensional semisimple Lie algebras with full-scalar multiplications is reduced to the research of the structure theory of contragredient Lie algebras.
In particular, the research of prehomogeneous vector spaces (not necessarily be of parabolic type) with sufficiently many scalar multiplications are reduced to the research of contragredient Lie algebras.
Using Theorem \ref {theo;simple_dom_emb} in the special case where $L(X_l)$ is simple and $k=1$, we can list graded Lie algebras such that a given finite-dimensional simple Lie algebra and its finite-dimensional irreducible module can be embedded.
\begin{pr}\label {pr;simple_dom_emb}
We retain to use the notations in Theorem \ref{theo;simple_dom_emb}.
Assume that $L(X_l)$ is a simple Lie algebra.
Let $\Lambda \in \mathrm {Hom }(\mathfrak{h},\mathbb{C})$ be a linear map such that $\Lambda (h_{\alpha _i})=-n_i\in \mathbb{Z}_{\leq 0}$ ($i=1,\ldots ,l$) and let $V_{\Lambda }$ (respectively $V^{-\Lambda }$) the irreducible $L(X_l)$-module with lowest weight $\Lambda $ (respectively highest weight $-\Lambda $).
Then, a graded Lie algebra $\mathfrak{L}=\bigoplus _{n\in \mathbb{Z}}\mathfrak{L}_n$ with a non-degenerate symmetric invariant bilinear form $B_{\mathfrak{L}}$ satisfying the following conditions:
\begin{itemize}
\item [\rm (i)]{the Lie subalgebra $\mathfrak{L}_0$ is isomorphic to $\mathfrak{gl}_1\oplus L(X_l)$, moreover, via this isomorphism, the canonical representation of $\mathfrak{L}_0$ on $\mathfrak{L}_1$ is isomorphic to the $\mathfrak{gl}_1\oplus L(X_l)$-module $(\rho _{\Lambda }^{\Box },V_{\Lambda })$,}
\item [\rm (ii)]{the restriction of $B_{\mathfrak{L}}$ to $\mathfrak{L}_m\times \mathfrak{L}_{-m}$ is non-degenerate for any $m\in \mathbb{Z}$,}
\item [\rm (iii)]{$\mathfrak{L}_{m+1}=[\mathfrak{L}_1,\mathfrak{L}_m]$ and $\mathfrak{L}_{-m-1}=[\mathfrak{L}_{-1},\mathfrak{L}_{-m}]$ for any $m\geq 0$}
\end{itemize}
is isomorphic to a contragredient Lie algebra whose Cartan matrix is of the form:
\begin{align}
C_s=
\left (
\begin{array}{c|c}
X_l&\begin{array}{c}-n_1\\ \vdots \\ -n_l\end{array}\\
\hline 
\begin{array}{ccc}-n_1(\alpha _1,\alpha _1)/2&\cdots &-n_l(\alpha _l,\alpha _l)/2\end{array}&s
\end{array}
\right )\label {eq;listembed}
\end{align}
where $s$ is a complex number such that $\det C_s\neq 0$.
\end{pr}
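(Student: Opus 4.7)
The plan is to realise $\mathfrak{L}$ as the Lie algebra associated with a symmetric standard pentad built from its local part, and then invoke Theorem \ref{theo;simple_dom_emb} in the special case $k=1$. Conditions (ii) and (iii) in the statement are precisely the hypotheses of Theorem \ref{th;universality_stap}, so
$$
\mathfrak{L} \simeq L\bigl(\mathfrak{L}_0, \ad, \mathfrak{L}_1, \mathfrak{L}_{-1}, B_{\mathfrak{L}}|_{\mathfrak{L}_0 \times \mathfrak{L}_0}\bigr)
$$
as graded Lie algebras. By condition (i), $\mathfrak{L}_0 \simeq \mathfrak{gl}_1 \oplus L(X_l)$ and $\mathfrak{L}_1 \simeq V_{\Lambda}$ with the full-scalar action $\rho_{\Lambda}^{\Box}$. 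The non-degenerate $\mathfrak{L}_0$-invariant pairing $B_{\mathfrak{L}}|_{\mathfrak{L}_1 \times \mathfrak{L}_{-1}}$ given by (ii) then identifies $\mathfrak{L}_{-1}$ with the contragredient $\mathfrak{L}_0$-module of $V_{\Lambda}$, which under the full-scalar convention is exactly $V^{-\Lambda}$ equipped with the $-\Box$ action of $\mathfrak{gl}_1$; this is the module appearing in Theorem \ref{theo;simple_dom_emb}.

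Next I would analyse $B_{\mathfrak{L}}|_{\mathfrak{L}_0 \times \mathfrak{L}_0}$. Since $\mathfrak{gl}_1$ and $L(X_l)$ are the centre and the derived subalgebra of $\mathfrak{L}_0$ respectively, invariance together with the simplicity of $L(X_l)$ force this bilinear form to split as a non-zero scalar on $\mathfrak{gl}_1$ and a non-zero scalar multiple of $K_{X_l}$ on $L(X_l)$. The overall rescaling of $B$ permitted by (\ref{eq;def_stapeq}) absorbs one of these two scalars, so the resulting pentad is equivalent to
$$
\bigl(\mathfrak{gl}_1 \oplus L(X_l), \rho_{\Lambda}^{\Box}, V_{\Lambda}, V^{-\Lambda}, B_{A_Z}\bigr)
$$
for some uniquely determined $A_Z \in \mathbb{C} \setminus \{0\}$. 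Feeding this into Theorem \ref{theo;simple_dom_emb} with $k = 1$ yields an isomorphism of $\mathfrak{L}$ with the contragredient Lie algebra whose Cartan matrix is the $(l+1)\times(l+1)$ matrix appearing on the right-hand side of (\ref{eq;fred2}). Substituting $\Gamma^{-1} = \diag((\alpha_1,\alpha_1)/2, \ldots, (\alpha_l,\alpha_l)/2)$ and $(\Lambda) = {}^t(-n_1, \ldots, -n_l)$ collapses the lower-left block to $\bigl(-n_1(\alpha_1,\alpha_1)/2, \ldots, -n_l(\alpha_l,\alpha_l)/2\bigr)$, identifying this Cartan matrix with $C_s$ for
$$
s := A_Z + {}^t(\Lambda) \cdot \Gamma^{-1} \cdot X_l^{-1} \cdot (\Lambda).
$$

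Finally, I would check that the constraint $\det C_s \neq 0$ in the statement corresponds precisely to $A_Z \neq 0$. A Schur-complement expansion along the last row and column gives $\det C_s = \det(X_l) \cdot A_Z$, and $\det X_l \neq 0$ since $X_l$ is the Cartan matrix of a finite-dimensional simple Lie algebra; hence $\det C_s \neq 0 \Leftrightarrow A_Z \neq 0$, exactly the condition required for $B_{A_Z}$ to be non-degenerate. The main obstacle I expect lies in the first step above: confirming that the ``lowest weight'' notion in the PC Lie algebra sense (Definition \ref{defn;highest_module}) coincides with the classical one on an irreducible finite-dimensional $L(X_l)$-module, so that the $L(X_l)$-dual of $V_{\Lambda}$ is genuinely $V^{-\Lambda}$ and so that the full-scalar dual involves the $-\Box$ action. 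This compatibility is, however, essentially spelled out in the paragraph preceding Theorem \ref{theo;simple_dom_emb}, so the argument should go through cleanly.
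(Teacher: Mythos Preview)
Your proposal is correct and follows essentially the same route as the paper's proof: invoke Theorem~\ref{th;universality_stap} via conditions (ii) and (iii), use simplicity of $L(X_l)$ (the paper phrases this as ``Schur's lemma'') to pin down the bilinear form on $\mathfrak{gl}_1\oplus L(X_l)$ up to one free scalar, and then apply Theorem~\ref{theo;simple_dom_emb} with $k=1$. Your additional computations---the explicit identification of $\mathfrak{L}_{-1}$ with $V^{-\Lambda}$, the substitution of $\Gamma^{-1}$ and $(\Lambda)$ to obtain the bottom row of $C_s$, and the Schur-complement check that $\det C_s\neq 0\Leftrightarrow A_Z\neq 0$---are details the paper leaves implicit, but the architecture is the same.
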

\begin{proof}
Using Schur's lemma, we can obtain that an arbitrary non-degenerate invariant bilinear form $B$ on $\mathfrak{gl}_1\oplus L(X_l)$ is of the form:
$$
B((c,A),(c^{\prime },A^{\prime }))=\tilde{s}cc^{\prime }+K_{X_l}(A,A^{\prime })\quad (\tilde{s}\in \mathbb{C}\setminus \{0\})
$$
up to scalar multiplication.
Thus, from the assumption that $V_{\Lambda }$ is finite-dimensional and Theorems \ref {th;universality_stap} and \ref {theo;simple_dom_emb}, we have an isomorphism of Lie algebras:
\begin{align*}
&\mathfrak{L}\simeq L(\mathfrak{L}_0,\ad,\mathfrak{L}_1,\mathfrak{L}_{-1},B_{\mathfrak{L}}\mid _{\mathfrak{L}_0\times \mathfrak{L}_0}) \simeq L(\mathfrak{gl}_1\oplus L(X_l),\rho _{\Lambda }^{\Box },V_{\Lambda },\mathrm {Hom }(V_{\Lambda },\mathbb{C}),B)\\
&\quad \simeq L(\mathfrak{gl}_1\oplus L(X_l),\rho _{\Lambda }^{\Box },V_{\Lambda },V^{-\Lambda },B)\\
&\quad \simeq G\left (\left (
\begin{array}{c|c}
X_l&\begin{array}{c}-n_1\\ \vdots \\ -n_l\end{array}\\
\hline 
\begin{array}{ccc}-n_1(\alpha _1,\alpha _1)/2&\cdots &-n_l(\alpha _l,\alpha _l)/2\end{array}&s
\end{array}
\right )\right )\quad \text{for some $s$}.
\end{align*}
Thus, we have our claim.
\end{proof}
\begin{ex}\label {ex;gl3}
As an application of Proposition \ref{theo;simple_dom_emb}, let us consider the natural representation of $\mathfrak{gl}_3$.
Let $\mathfrak{g}=\mathfrak{gl}_1\oplus\mathfrak{sl}_3\simeq \mathfrak{gl}_3$ and $\rho ^{\Box }$ a representation of $\mathfrak{g}$ on $V=\mathrm {M}(3,1;\mathbb{C})$ defined by:
$$
\rho ^{\Box }\left ((a,A)\otimes v\right )=av+Av
$$
where $a\in \mathfrak{gl}_1$, $A\in \mathfrak{sl}_3$ and $v\in V$.
A representation $\rho =\rho ^{\Box }\mid _{[\mathfrak{g},\mathfrak{g}]}=\rho ^{\Box }\mid _{\mathfrak{sl}_3}$ is identified with the natural representation of $\mathfrak{sl}_3$ canonically.
If we draw the Dynkin diagram of $\mathfrak{sl}_3$ as:
\begin{align*}
\begin{xy}
(0,0)*{\bullet}="1",
(0,-4)*{\alpha _1},
(20,0)*{\bullet}="2",
(20,-4)*{\alpha _2},
{"1" \ar @{-} "2"},
\end{xy}
\end{align*}
then we have that the lowest weight $\Lambda $ of $\rho $ satisfies $\Lambda (h_{\alpha _1})=1$, $\Lambda (h_{\alpha _2})=0$.
Thus, we have that a graded Lie algebra $\mathfrak{L}$ with a bilinear form $B_{\mathfrak{L}}$ satisfying the conditions {\rm (i)}, {\rm (ii)} and {\rm (iii)} in Proposition \ref {pr;simple_dom_emb} for $(\mathfrak{gl}_3,\text{natural representation}, V)$ is isomorphic to a contragredient Lie algebra of the form:
\begin{align}
G\left (
\left (
\begin{array}{cc|c}
2&-1&-1\\
-1&2&0\\ \hline 
-1&0&s
\end{array}
\right )
\right )
\simeq 
G\left (
\left (
\begin{array}{cc|c}
2&-1&0\\
-1&2&-1\\ \hline 
0&-1&s
\end{array}
\right )
\right )
\quad 
(s\neq \frac{2}{3}).
\label {eq;gl3nat}
\end{align}
In particular, the Lie algebra (\ref {eq;gl3nat}) is finite-dimensional, i.e. the Cartan matrix is of finite type, if and only if $s=2$ or $s=1$.
When $s=2$, the Lie algebra (\ref {eq;gl3nat}) is isomorphic to $\mathfrak{sl}_4$.
When $s=1$, the Lie algebra (\ref {eq;gl3nat}) is isomorphic to 
$$
G\left (
\left (
\begin{array}{cc|c}
2&-1&0\\
-1&2&-1\\ \hline 
0&-1&1
\end{array}
\right )
\right )
\simeq 
G\left (
\left (
\begin{array}{cc|c}
2&-1&0\\
-1&2&-1\\ \hline 
0&-2&2
\end{array}
\right )
\right )
\simeq 
\mathfrak{so}_7.
$$
\end{ex}
\begin{remark}
It is known that the representation $(\mathfrak{gl}_1\oplus \sl_3,\rho ^{\Box },V)$ is a prehomogeneous vector space of parabolic type.
The result in Example \ref {ex;gl3} is consistent with the classification of prehomogeneous vector spaces of parabolic type (see \cite{ru-1} or \cite{ru-3}).
\end{remark}

\section *{Acknowledgement}
I would like to thank Professor Hiroyuki Ochiai for many helpful comments.
The work in this paper is supported by JST CREST.

\medskip
\begin{flushleft}
Nagatoshi Sasano\\
Institute of Mathematics-for-Industry\\
Kyushu University\\
744, Motooka, Nishi-ku, Fukuoka 819-0395\\
Japan\\
E-mail: n-sasano@math.kyushu-u.ac.jp
\end{flushleft}

\end{document}